\tikzset{main node/.style={circle,draw,minimum size=0.3em,inner sep=0.5pt}}
\tikzset{small node/.style={circle, draw,minimum size=0.1cm,scale=0.3,fill}}
\tikzset{anchorbase/.style={>=To,baseline={([yshift=-0.5ex]current bounding box.center)}}}
\tikzset{->-/.style={decoration={markings, mark=at position 0.53 with {\arrow[-Latex]{>}}},postaction={decorate}}}
\theoremstyle{plain}
\newtheorem{theorem}{Theorem}[section]
\newtheorem{lemma}[theorem]{Lemma}
\newtheorem{prop}[theorem]{Proposition}
\newtheorem{cor}[theorem]{Corollary}
\theoremstyle{definition}
\newtheorem{defn}[theorem]{Definition}
\newtheorem{exa}[theorem]{Example}
\newtheorem{notation}[theorem]{Notation}
\theoremstyle{remark}
\newtheorem{rmk}[theorem]{Remark}
\numberwithin{equation}{section}
\newcommand{\al}{\alpha}
\newcommand{\be}{\beta}
\newcommand{\ep}{\varepsilon}
\newcommand{\Q}{{\mathbb Q}}
\newcommand{\R}{{\mathbb R}}
\newcommand{\Z}{{\mathbb Z}}
\newcommand{\ra}{\rightarrow}
\newcommand{\bfa}{\mathbf{a}}
\newcommand{\B}{{\mathcal B}}
\newcommand{\yg}[3]{Y_{#1,#2,#3}}
\newcommand{\ypqr}{\yg{a}{b}{c}}
\newcommand{\ygc}[3]{Y_{#1,#2,#3}}
\newcommand{\hexg}[3]{H_{#1,#2,#3}}
\newcommand{\hpqr}{\hexg{a-2}{b-2}{c-2}}
\newcommand{\Stab}{\text{\rm Stab}}
\newcommand{\tworoot}{2-root\xspace}
\newcommand{\tworoots}{2-roots\xspace}
\newcommand{\elementary}{elementary\xspace}
\newcommand{\simproots}{\Pi}
\newcommand{\posroots}{\Phi_+}
\newcommand{\allroots}{\Phi}
\newcommand{\realroots}{\Phi_{\text{\rm re}}}
\newcommand{\alltworoots}{\Phi^2}
\newcommand{\realtworoots}{\Phi^2_{\text{\rm re}}}
\newcommand{\otherb}{B'}
\newcommand{\thirdb}{\widetilde{B}}
\newcommand{\perm}{\text{\rm perm}}
\newcommand{\radi}{\text{\rm rad}}
\newcommand{\Span}{\text{\rm Span}}
\newcommand{\virasoro}{\omega}
\newcommand{\trorbit}{X}
\newcommand{\ome}{\rho}
\newcommand{\rootht}{\text{\rm ht}}
\newcommand{\tworootht}{\text{\rm ht}_2}
\newcommand{\exterior}[1]{\mathop{\mathpalette\exterior@{#1}}}
\newcommand{\exterior@}[2]{%
  \raisebox{\depth}{%
  \fontsize{\sf@size}{0}%
  \m@th
  $\ifx#1\displaystyle\textstyle\else#1\fi\bigwedge$}%
  ^{\mspace{-2mu}#2}%
  \kern-\scriptspace
}
\title{2-roots for simply laced Weyl groups}
\author{R. M. Green}
\address{Department of Mathematics\\
University of Colorado Boulder, Campus Box 395\\
Boulder, Colorado\\
USA, 80309}
\email{rmg@colorado.edu}
\author{Tianyuan Xu}
\address{Department of Mathematics and Statistics\\
    Haverford College\\
    Haverford, Pennsylvania\\
USA, 19041}
\email{txu2@haverford.edu}
\keywords{Weyl group, reflection representation, root system, canonical basis}
\subjclass[2020]{Primary: 17B22; Secondary: 20F55.}
\begin{document}
\maketitle

\begin{abstract}
We introduce and study ``\tworoots{}", which are symmetrized tensor products of orthogonal roots of
Kac--Moody algebras. We concentrate on the case where 
$W$ is the Weyl group of a simply laced Y-shaped Dynkin diagram $\ypqr$ having $n$ vertices and 
with three branches of arbitrary finite lengths $a$, $b$ and $c$; special cases of this include types $D_n$, 
$E_n$ (for arbitrary $n \geq 6$), and affine $E_6$, $E_7$ and $E_8$. We show that a natural
codimension-$1$ submodule $M$ of the symmetric square of the reflection representation of $W$ has a 
remarkable canonical basis $\B$ that consists of \tworoots.
We prove that, with respect to $\B$, every element of $W$ is 
represented by a column sign-coherent matrix in the sense of cluster algebras.
If $W$ is a finite simply laced Weyl group, each $W$-orbit of \tworoots has a highest element, analogous to
the highest root, and we calculate these elements explicitly.
We prove that if $W$ is not of affine type, the module $M$ is completely reducible in characteristic
zero and each of its nontrivial direct summands is spanned by a $W$-orbit of \tworoots.
\end{abstract}

\section*{Introduction}

A {\it \tworoot} is a symmetrized tensor product $\alpha \vee \beta := \alpha \otimes \beta + 
\beta \otimes \alpha$, where $\alpha$ and $\beta$ are orthogonal roots for a Kac--Moody algebra. In this
paper, we develop the theory of \tworoots, concentrating 
on the case where the Dynkin diagram $\Gamma$ is a Y-shaped, simply laced Dynkin diagram of rank 
$n=a+b+c=1$, with arbitrarily long branches of positive lengths $a$, $b$, and $c$. The Weyl groups
$W = W(\ypqr)$ of these types play an important role in group theory even outside the finite and affine types, 
in part because some of them have very interesting finite
quotients. For example, by adding one extra relation to the Coxeter presentation for the Weyl group of
type $\ygc{3}{4}{4}$, it is possible to obtain the group $C_2 \times {\mathbf M}$, where $C_2$ has order $2$ and 
${\mathbf M}$ is the Monster simple group \cite{ivanov93}.
The special cases of $\ygc{1}{2}{6}$ and $\ygc{1}{2}{7}$, also known respectively as $E_{10}$ and $E_{11}$, 
appear in the physics literature in M-theory and related contexts. 


\begin{figure}[ht!]
    \centering
\begin{tikzpicture}
            \node[main node] (2) {};
            \node[main node] (3) [right=1cm of 2] {};
            \node[main node] (4) [right=1.5cm of 3] {};
            \node[main node] (5) [right=1cm of 4] {};
            \node[main node] (6) [right=1cm of 5] {};
            \node[main node] (7) [right=1.5cm of 6] {};
            \node[main node] (8) [right=1cm of 7] {};
            \node[main node] (a) [above=1cm of 5] {};
            \node[main node] (b) [above=1.5cm of a] {};
            \node[main node] (c) [above=1cm of b] {};

            \path[draw]
            (2)--(3)
            (4)--(5)--(6)
            (7)--(8)
            (5)--(a)
            (b)--(c);
            \path[draw,dashed]
            (3)--(4)
            (6)--(7)
            (a)--(b);

            \draw[decorate, decoration={brace,amplitude=8pt, mirror,
            raise=6pt}] (2)--(4) node [midway, yshift=-2.5em] {$b$ vertices};

            \draw[decorate, decoration={brace,amplitude=8pt, mirror,
            raise=6pt}] (6)--(8) node [midway, yshift=-2.5em] {$a$ vertices};

            \draw[decorate, decoration={brace,amplitude=8pt, mirror,
            raise=6pt}] (a)--(c) node [midway, xshift=4em] {$c$ vertices};
\end{tikzpicture}
    \caption{The Dynkin diagram of type $Y_{a,b,c}$ }
    \label{fig:Y}
\end{figure}

The reflection representation of the Weyl group $W$ of $\ypqr$ 
is an $n$-dimensional real representation $V$ of $W$ that is equipped with a symmetric $W$-invariant bilinear 
form $B$. As we recall in Proposition \ref{virred}, the module 
$V$ turns out to be irreducible unless $\ypqr$ is one of the three affine types: affine $E_6$, $E_7$ and $E_8$.
The symmetric square $S^2(V)$ is never an irreducible module (except in trivial cases) because the kernel of $B$
(regarded as a map from $S^2(V)$ to $\R$) forms a codimension-$1$ submodule $M$ that
contains the set $\alltworoots$ of \tworoots. We call a \tworoot{} {\it real} if it arises from an orthogonal pair of
real roots. The Weyl group $W$ acts on the set $\realtworoots$ of real \tworoots in a natural way, and it follows 
from known results that the action has three orbits in type $D_4$, two orbits in type $D_n$ for $n > 4$, and one orbit 
otherwise (Proposition \ref{prop:explicitorbits}). If $\ypqr$ is not of affine type, 
then in characteristic zero, the module $M$ is a direct sum of irreducible submodules, each of which is the span of one of the 
$W$-orbits of real \tworoots (Theorem \ref{s2vdecomp}). Furthermore, in the non-affine case, the module 
$M$ has a complement in $S^2(V)$, spanned by a kind of Virasoro element (Proposition \ref{prop:virasoro}), so that $S^2(V)$ 
is completely reducible.

We show in Theorem \ref{canbas} that there is a canonically defined subset of $\realtworoots$ that forms a basis for $M$, 
which we call the canonical basis of $M$. One way to construct this basis is in terms of the stabilizer in $W$ of a simple 
root $\al_i$, which is known by work of Brink \cite{brink96} and Allcock \cite{allcock13} to be a reflection 
group with simple system $\{\be_{i,1}, \ldots, \be_{i,n-1}\}$. The canonical basis is then given (Theorem \ref{cbminimal}) 
by the (redundantly described) set $\B := \{\al_i \vee \be_{i,j} : 1 \leq i \leq n, 1 \leq j < n\}$.
If $s_i$ is a simple reflection and $v$ is a canonical basis element, then $s_i(v)$ is equal either to $-v$, or to $v$, 
or to $v + v'$ for some other basis element $v'$ (Theorem \ref{thm:canbasrefl}). This is very similar to how a simple 
reflection acts on a simple root, which is one of the reasons for the name ``\tworoots{}''.

On the module $M$, the matrices representing group elements $w \in W$ with respect to the canonical
basis have integer entries. We prove (Theorem \ref{thm:coherence}) that these matrices are column
sign-coherent in the sense of cluster algebras, which means that any two nonzero entries in the same
column of a matrix have the same sign.  Because every real \tworoot is $W$-conjugate to a basis
element (Proposition \ref{orthorbits} (iii)), an equivalent way to say this is that each real
\tworoot is an integer linear combination of canonical basis elements with coefficients of like
sign, similar to how every root of $W$ is an integer linear combination of simple roots with
coefficients of like sign. It follows that the elements of $\B$ have a simple characterization: they
are the positive real \tworoots that cannot be expressed as a positive linear combination of two or
more positive real \tworoots.

We use the canonical basis $\B$ to define a partial order $\leq_2$ on $\realtworoots$ 
by declaring that $v_1\leq_2 v_2$ if $v_2-v_1$ is a positive linear combination of elements of $\B$. 
We prove in Proposition \ref{prop:refinement} that $\leq_2$ is a refinement of the so-called monoidal partial order 
defined by Cohen, Gijsbers, and Wales on sets of orthogonal positive roots in \cite{cohen06}.
In the case where $W$ is finite, it then follows (Theorem \ref{thm:highest}) that $\realtworoots$ contains a unique maximal 
\tworoot with respect to $\leq_2$, which we describe explicitly (Theorem \ref{thm:highestlist}). 

Although we concentrate on the case of type $\ypqr$ in this paper, some of the results hold for type $A_n$ by
restriction.  
The difference in type $A$ is that the orthogonal complement of a root is not spanned by the roots it contains. 
When $V$ is the reflection representation in type $A$ (corresponding to the partition $(n-1, 1)$), it is known that 
$S^2(V)$ is the direct sum of three representations, corresponding to the partitions $(n)$, $(n-1, 1)$ and $(n-2, 2)$. 
(This follows from \cite[Example 2]{bowman15}, using the fact that the exterior square $\exterior{2}(V)$ corresponds to 
the partition $(n-2, 1^2)$; see also \cite[Proposition 5.4.12]{geck00}.) In this case, the submodule $(n-2, 2)$ corresponds
to the span of the \tworoots, the submodule $(n-1, 1)$ corresponds to its complement in the module $M$, and the submodule $(n)$
corresponds to the Virasoro element. 

We also note that the results of this paper do not seem to generalize readily to all simply
laced Weyl groups. For example, let $n> 6$ and consider the simply laced Weyl group $W=W(\tilde
D_{n-1})$ of type affine
$D$ and rank $n$. Then by the third example in \cite[Section 4]{allcock13}, for any simple root
$\alpha_i$ of $W$ the stabilizer $W_{\alpha_i}$ of $\alpha_i$ in $W$ is a Weyl group of rank
$n$, not $n-1$. It follows that the elements of the form $\alpha_i\vee\beta$ where $\beta$ is a simple root of
$W_{\alpha_i}$ no longer form a linearly independent set, therefore the conclusions in Theorem
\ref{cbminimal} no longer hold.


The paper is organized as follows. 
Section \ref{sec:canbas} defines the canonical basis $\B$ of \tworoots (Theorem \ref{canbas}).
Section \ref{sec:stab} explains how to construct the basis $\B$ in terms of the stabilizers of real roots 
(Theorem \ref{cbminimal}).
Section \ref{sec:orbits} describes the $W$-orbits of real \tworoots.
Section \ref{sec:reflact} describes the action of reflections on \tworoots, and gives a simple formula
(Theorem \ref{thm:canbasrefl}) for the action of a simple reflection on a canonical basis element.
In Section \ref{sec:coherence}, we prove the sign-coherence properties of the canonical basis
({Theorem \ref{thm:coherence} and Theorem \ref{thm:coherence2}).
In Section \ref{sec:highest}, we prove that a $W$-orbit of \tworoots for a finite simply laced Weyl group has a 
unique maximal element (Theorem \ref{thm:highest}) and we determine this maximal element explicitly
(Theorem \ref{thm:highestlist}). 
In Section \ref{sec:repthy}, we use $W$-orbits of \tworoots to describe the submodules of $S^2(V)$, both in
general characteristic (Theorem \ref{modradical}) and in characteristic zero (Theorem \ref{s2vdecomp}).
In Section \ref{sec:faithful}, we determine when $W$ acts faithfully on the representations arising from $W$-orbits
(Theorem \ref{thm:faithful}).
The results in this paper immediately suggest directions for future research, which we summarize in the conclusion.

\section{The canonical basis of \tworoots}\label{sec:canbas}

Throughout this paper, we will work over a field $F$ that is of characteristic zero unless otherwise stated. By default, 
we will assume that $F = \R$, but everything will be defined over $\Q$, and scalars can be extended if necessary.

Let $\Gamma = \ypqr$ be a simply laced Dynkin diagram with $n=a+b+c+1$ vertices, consisting of three paths with $a$, $b$, and 
$c$ vertices emanating from a trivalent branch vertex. Let $A$ be the associated Cartan matrix, whose entries $A_{ij}$ are 
equal to $2$ if $i = j$, $-1$ if $i$ and $j$ are adjacent in $\Gamma$, and $0$ otherwise.

Let $W$ be the Weyl group associated to $\Gamma$. It is generated by the set $S=\{s_1, \dots, s_n\}$
indexed by the vertices of $\Gamma$, and subject to the defining relations $s_i^2 = 1$, $s_i s_j =
s_j s_i$ if $A_{ij}=0$, and $s_i s_j s_i = s_j s_i s_j$ if $A_{ij}=-1$.

Let $\simproots = \{\al_1, \dots, \al_n\}$ be the set of simple roots of $W$, and let $V$ be the $\R$-span of $\simproots$.
Let $B$ be the Coxeter bilinear form on $V$, normalized so that $B(\al_i, \al_j) = A_{ij}$, 
and let $$
V^\perp = \{v \in V : B(v, v') = 0 \mbox{\ for\ all\ } v' \in V\}
$$ be the radical of $B$.
A real root of $W$ is an element of $V$ of the form $w(\al_i)$, where $w \in W$ and $\al_i$ is a simple root.  The real 
root $\al = w(\al_i)$ is associated with the reflection $s_{\al} = ws_iw^{-1}$; in particular,
we have $s_{\alpha_i}=s_i$ for each $i$. The reflection $s_\alpha$ acts on basis elements of $V$ by the 
formula $$
s_{\al}(\al_j) = \al_j - B(\al, \al_j) \al
.$$ 
When $V$ is endowed with this action, we call $V$ the {\it reflection representation} of $W$.

It is immediate from the above formula that $W$ stabilizes the $\Z$-span, $\Z \simproots$, of the simple roots.
The lattice $\Z \simproots$ is called the {\it root lattice} and is often denoted by $Q$.
The form $B$ is invariant under this action of the Weyl group, meaning that we always have $B(v,v')=B(w.v,w.v')$. This 
implies that $V^\perp$ is a $W$-submodule of $V$, and that we have $B(\al, \al)=2$ for every real root $\al$.

A Kac--Moody algebra may have roots other than real roots; such roots are called {\it imaginary roots}. We will not give 
the full definition of imaginary roots, but we will need the result that in the case of affine Kac--Moody algebras, the 
imaginary roots are precisely the nonzero integer multiples $n \delta$ of the lowest positive imaginary root, $\delta$.
The root $\delta$ satisfies $B(\delta, v)=0$ for all $v \in V$.

\begin{prop}\label{virred}
If $W$ is a Weyl group of type $\ypqr$, then $V$ is an irreducible $W$-module if and only 
if $W$ is not of type affine $E_6$, affine $E_7$ or affine $E_8$.
\end{prop}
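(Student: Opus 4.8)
The plan is to analyse $V$ through the radical $V^\perp$ of the form $B$, treating the non-affine and affine cases separately.

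The first step is a general lemma: if $\Gamma$ is connected and $V^\perp=0$, then $V$ is irreducible. To prove it, let $U$ be a nonzero $W$-submodule. For each vertex $i$, the reflection formula $s_i(u)=u-B(u,\al_i)\al_i$ shows that either some $u\in U$ has $B(u,\al_i)\neq 0$, in which case $\al_i=B(u,\al_i)^{-1}\bigl(u-s_i(u)\bigr)\in U$, or else $U\subseteq\{v\in V:B(v,\al_i)=0\}$. Put $J=\{i:\al_i\in U\}$. If $J$ is all of $S$ then $U=V$, since $\simproots$ spans $V$. If $J=\emptyset$ then $U\subseteq\bigcap_i\{v:B(v,\al_i)=0\}=V^\perp=0$, which is impossible. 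If $J$ is a proper nonempty subset of $S$, then since $\Gamma$ is connected there is an edge joining some $i\in J$ to some $j\notin J$; but then $\al_i\in U\subseteq\{v:B(v,\al_j)=0\}$, so $B(\al_i,\al_j)=0$, contradicting $A_{ij}=-1$. Hence $U=V$, proving the lemma. (One could instead quote the fact that the reflection representation of an irreducible, non-affine Coxeter system is irreducible, but I prefer this self-contained argument.)

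The second step treats the non-affine case by showing $V^\perp=0$. Since the Gram matrix of $B$ in the basis $\simproots$ is exactly the Cartan matrix $A$, we have $V^\perp=0$ if and only if $\det A\neq 0$. Viewing $\ygc{a}{b}{c}$ as a three-legged tree with legs of $a$, $b$, $c$ vertices off the branch vertex and setting $p=a+1$, $q=b+1$, $r=c+1$, a routine determinant computation — a linear recursion $f(p)=2f(p-1)-f(p-2)$ obtained by expanding along a leaf, starting from $\det A(\text{path on }m\text{ vertices})=m+1$ — gives $\det A=pqr\!\left(\tfrac1p+\tfrac1q+\tfrac1r-1\right)=pq+pr+qr-pqr$. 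This vanishes exactly when $\tfrac1p+\tfrac1q+\tfrac1r=1$, i.e.\ exactly for $(a,b,c)\in\{(2,2,2),(1,3,3),(1,2,5)\}$, the diagrams of affine $E_6$, $E_7$, $E_8$. So when $W$ is not of one of these three types, $\det A\neq 0$, hence $V^\perp=0$; since $\Gamma$ is a tree and therefore connected, the lemma yields that $V$ is irreducible.

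The third step treats the affine case. If $\ygc{a}{b}{c}$ is affine $E_6$, $E_7$ or $E_8$, then the lowest imaginary root $\delta$ satisfies $B(\delta,v)=0$ for all $v$, so $\R\delta$ is a nonzero $W$-submodule contained in $V^\perp$; as an affine Cartan matrix has corank $1$, in fact $V^\perp=\R\delta$, and since $n=\dim V\geq 7$ this is a proper nonzero submodule. Hence $V$ is reducible, which completes the equivalence. I expect the only nontrivial ingredients to be the lemma of the first step and the nondegeneracy of $B$ in the indefinite case; the latter is precisely where the $Y$-shape is used, through the determinant formula.
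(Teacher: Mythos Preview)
Your proof is correct and follows exactly the same strategy as the paper: reduce irreducibility to the condition $V^\perp=0$ and then compute the Cartan determinant for $\ypqr$ to see that it vanishes precisely in the three affine cases. The paper simply cites Humphreys \cite[Proposition 6.3]{humphreys90} for your first-step lemma and Dolgachev \cite[Example 4.3]{dolgachev08} for your second-step determinant formula, whereas you supply both arguments explicitly; so your proof is a self-contained version of theirs rather than a different approach.
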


\begin{proof}
By \cite[Proposition 6.3]{humphreys90}, it suffices to show that $V^\perp=0$. We omit the rest of the proof,
because the result is well known; see for example \cite[Example 4.3]{dolgachev08}.
\end{proof}

We regard the symmetric square, $S^2(V)$ of $V$ as a submodule (rather than as a quotient) of $V \otimes V$. 
If $\al, \be \in V$, we
write $\al \vee \be$ (or $\be \vee \al$) for the element of $S^2(V)$ given by $\al \otimes \be + \be \otimes \al$. 
The basis $\simproots$ of $V$ gives rise to a basis of $S^2(V)$ given by $$
\{\al_s \vee \al_t : s, t \in \simproots\}
,$$ which we call the {\it standard basis} of $S^2(V)$ (with respect to $\simproots$). Restricting the diagonal 
action of $W$ on $V \otimes V$ gives $S^2(V)$ the structure of a $W$-module.

The following result is an immediate consequence of the $W$-invariance of $B$. 

\begin{lemma}\label{codimone}
Regard $B$ as a map $B : S^2(V) \ra F$, and let $M = \ker B$. Then $M$ is a $W$-submodule of $S^2(V)$ of
dimension $$
\dim(M) = \dim(S^2(V)) - 1 = \binom{n+1}{2} - 1
,$$ and $S^2(V)/M$ affords the trivial representation of $W$. \qed
\end{lemma}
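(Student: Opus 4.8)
The plan is to realize $B$ as a morphism of $W$-modules from $S^2(V)$ to the trivial module $F$, and then read off all three assertions from a dimension count together with the first isomorphism theorem.

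First I would make the map precise. The symmetric bilinear form $B \colon V \times V \to F$ corresponds to a linear map $V \otimes V \to F$ sending $v \otimes v'$ to $B(v,v')$; restricting this along the inclusion $S^2(V) \hookrightarrow V \otimes V$ gives the linear map also denoted $B$, which satisfies $B(\alpha \vee \beta) = 2B(\alpha,\beta)$ for $\alpha, \beta \in V$. I would then check that this map is $W$-equivariant when $F$ carries the trivial $W$-action: for $w \in W$ and $v, v' \in V$ we have $B\bigl(w.(v \otimes v')\bigr) = B(w.v, w.v') = B(v,v')$ by the $W$-invariance of the form recorded just before the lemma, and this extends linearly to all of $V \otimes V$ and hence restricts to $S^2(V)$. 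Consequently $M = \ker B$, being the kernel of a $W$-equivariant map out of the $W$-submodule $S^2(V)$, is itself a $W$-submodule of $S^2(V)$.

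Next I would observe that $B$ is not the zero map on $S^2(V)$: since the characteristic is zero, $B(\alpha_i \vee \alpha_i) = 2B(\alpha_i, \alpha_i) = 4 \neq 0$. Hence $B \colon S^2(V) \to F$ is surjective, so a dimension count gives $\dim M = \dim S^2(V) - 1$; since $\dim V = n$ we have $\dim S^2(V) = \binom{n+1}{2}$, which yields the stated value of $\dim M$. Finally, the first isomorphism theorem produces a $W$-module isomorphism $S^2(V)/M \xrightarrow{\ \sim\ } F$, and $F$ with the trivial action is precisely the trivial representation of $W$, establishing the last claim.

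There is essentially no obstacle here; the argument is a routine application of the $W$-invariance of $B$. The only points deserving a moment's attention are keeping track of the factor of $2$ in $B(\alpha \vee \beta) = 2B(\alpha,\beta)$ (harmless over a field of characteristic zero, but worth noting) and using the fact that $S^2(V)$ is itself a $W$-submodule of $V \otimes V$, so that $M$ is automatically a submodule as the kernel of a restriction of a $W$-equivariant map, with no separate invariance computation required.
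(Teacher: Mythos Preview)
Your proposal is correct and follows exactly the approach the paper intends: the paper states this lemma as ``an immediate consequence of the $W$-invariance of $B$'' and marks it with \qed\ without further argument, and your write-up simply spells out those immediate details (equivariance of $B$ on $S^2(V)$, nonvanishing so that $B$ is surjective, and the dimension count).
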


Recall that the positive roots of $W$ are partially ordered in such a way that $\al \leq \be$ if and only if 
$\be - \al$ is a nonnegative linear combination of simple roots, and that if $W$ is finite, then there exists a 
highest root with respect to this order. In type $A_n$, the highest root is the sum of all the simple roots. 
In type $D_n$, the highest root is $\sum_{i=1}^n \lambda_i \al_i$, where we have $$
\lambda_i = \begin{cases}
1 & \text{if}\ i \text{\ is an endpoint of\ } \Gamma;\\
2 & \text{otherwise.}\\
\end{cases}
$$ 

\begin{defn}\label{minroot}
We define a positive real root $\al$ of type $\ypqr$ to be {\it \elementary} if $\al$ is a simple root, or the 
highest root in a type $A_3$ standard parabolic subsystem, or the highest root in a type $D_m$ standard 
parabolic subsystem for $m \geq 4$. To each \elementary root $\al$, 
we associate a nonempty subset $L(\al)$ of the vertices of $\Gamma$, defined as follows.
\begin{itemize}
    \item[(1)]{If $\al_i$ is a simple root, then we define $L(\al_i)$ to be the set of all $j$ for which 
    $A_{ij} = 0$; in other words, the set of all vertices in $\Gamma$ that are not equal to or adjacent to $i$.}
    \item[(2)]{If $\al$ is the highest root in a standard parabolic subgroup of type $A_3$, then
    $\al = \al_i + \al_j + \al_k$ for some path $i$--$j$--$k$ in $\Gamma$, and we define 
    $L(\al) = \{j\}$. We denote $\al_i + \al_j + \al_k$ by $\eta_{i, k}$. If $j$ is not the 
    branch point of $\Gamma$, then $i$ and $k$ can be deduced from a knowledge of $j$, and we 
    may write $\eta_j$ for $\eta_{i,k}$.}
    \item[(3)]{Suppose that $\al$ is the highest root of a standard parabolic subgroup of type
    $D_m$. If $m=4$, we define $L(\al)$ to be the three-element set consisting of the neighbours of the
    branch point in $\Gamma$. If $m > 4$, we define $L(\al)$ to be the single element $\{i\}$ indexing
    the unique simple root in the support of $\al$ that is maximally far from the branch point. In either
    case, we may denote $\al$ by $\theta_i$ for any $i \in L(\al)$.}
\end{itemize}
We say that an \elementary root $\al$ is of {\it type 1, 2, or 3}, depending on which of the three mutually exclusive
conditions above applies.  If $\al$ is an \elementary root and $i \in L(\al)$, then we say $\al$ is 
{\it \elementary with respect to $\al_i \in \simproots$}.
\end{defn}

\begin{rmk} \label{uniquetyped}
Any simple root $\al_i$ other than the one corresponding to the branch point of $\Gamma$ lies in a
unique standard parabolic subsystem of type $D_m$ that is of minimal rank. The simple roots involved in this 
parabolic subsystem are those on the path between $\al_i$ and the branch point, together with all the neighbours
of the branch point. In the notation of Definition \ref{minroot}, part (3), the highest root of this parabolic 
subsystem is $\theta_i$, and it is elementary with respect to $i$.
\end{rmk}

\begin{lemma}\label{minrootcount}
Let $\al_i$ be a simple root of type $\ypqr$, and let $n = a+b+c+1$. Then there are precisely $n-1$ \elementary roots
that are \elementary with respect to $\al_i$, and each such \elementary root $\al$ satisfies $B(\al_i, \al) = 0$.
\end{lemma}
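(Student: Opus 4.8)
The plan is to go through the three types of \elementary roots in Definition \ref{minroot} one at a time, in each case counting those that are \elementary with respect to $\al_i$ and verifying that $B(\al_i, \al) = 0$. Throughout I would use freely that $\Gamma$ is a tree: in particular, any two distinct neighbours of a vertex of $\Gamma$ are non-adjacent, and the branch point $p$ is the unique vertex of $\Gamma$ of degree $3$, every other vertex having degree $1$ or $2$.

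For an \elementary root $\al_j$ of type~$1$: it is \elementary with respect to $\al_i$ exactly when $i \in L(\al_j)$, i.e.\ when $A_{ij} = 0$, i.e.\ when $j \neq i$ and $j$ is not adjacent to $i$. There are $n - 1 - \deg(i)$ such $j$, and for each of them $B(\al_i, \al_j) = A_{ij} = 0$. For an \elementary root of type~$2$: it is \elementary with respect to $\al_i$ exactly when it has the form $\al_k + \al_i + \al_\ell$ for a path $k$--$i$--$\ell$ in $\Gamma$, that is, when $k$ and $\ell$ are two distinct neighbours of $i$; because $\Gamma$ is a tree, every unordered pair of distinct neighbours of $i$ gives such a path (automatically spanning a type-$A_3$ parabolic), so there are $\binom{\deg(i)}{2}$ of these, and $B(\al_i, \al_k + \al_i + \al_\ell) = (-1) + 2 + (-1) = 0$.

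The type-$3$ count is the crux. I would first note that every standard parabolic subsystem of type $D_m$ with $m \geq 4$ must contain $p$ and have $p$ as its trivalent vertex, hence consists of $p$ together with an initial segment of one branch of $\Gamma$ (the ``long arm'') and the single neighbour of $p$ on each of the other two branches (the two ``short arms''). When $m = 4$ all three arms have length $1$, so there is exactly one such subsystem, and by Definition \ref{minroot}(3) its highest root $\theta$ has $L(\theta)$ equal to the set of the three neighbours of $p$. When $m > 4$ the long arm has length $m - 3 \geq 2$, so the subsystem is pinned down by a choice of branch together with a segment length, and its highest root has $L$-set the singleton consisting of the far endpoint of the long arm. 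Reading this off, one finds that when $i \neq p$ there is exactly one type-$3$ \elementary root that is \elementary with respect to $\al_i$ --- the highest root of the $D_4$ subsystem if $i$ is a neighbour of $p$, and otherwise the highest root of the $D_m$ subsystem whose long arm is precisely the path from $p$ to $i$ --- while there is none when $i = p$. The orthogonality then falls out of the explicit highest root formulas: in the $D_4$ case the coefficient at $p$ is $2$ and those at the three outer vertices are $1$, and $\al_i$ is adjacent, within the parabolic, only to $\al_p$, so $B(\al_i, \theta) = 2B(\al_i, \al_p) + B(\al_i, \al_i) = -2 + 2 = 0$; in the $D_m$ case with $m > 4$, the coefficient at the far endpoint $i$ is $1$ and the coefficient at its neighbour $i^-$ (towards $p$) is $2$, while $\al_i$ is orthogonal to every other simple root of the parabolic, so $B(\al_i, \theta) = B(\al_i, \al_i) + 2B(\al_i, \al_{i^-}) = 2 - 2 = 0$.

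Adding the three counts, the number of \elementary roots that are \elementary with respect to $\al_i$ equals $(n - 1 - 3) + \binom{3}{2} + 0 = n - 1$ if $i = p$ (so $\deg(i) = 3$), equals $(n - 1 - 2) + \binom{2}{2} + 1 = n - 1$ if $\deg(i) = 2$, and equals $(n - 1 - 1) + \binom{1}{2} + 1 = n - 1$ if $\deg(i) = 1$; in every case the total is $n - 1$. I expect the only real obstacle to be the bookkeeping in the type-$3$ step --- in particular, making sure that a $D_m$ subsystem whose long arm strictly passes through $i$ is not counted, because then its $L$-set is the far endpoint of the long arm rather than $\{i\}$ --- together with keeping the three degree cases straight; the orthogonality verifications themselves are routine.
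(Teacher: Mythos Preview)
Your proof is correct and follows essentially the same approach as the paper's: both are case analyses that count the elementary roots of each type and verify orthogonality directly from the definitions. The only difference is organizational---the paper splits on whether $i$ is an endpoint, the branch point, or neither, whereas you first count each root type via $\deg(i)$ and then sum over the three possible degrees---but the underlying argument is identical.
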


\begin{proof}
A case-by-case check based on Definition \ref{minroot} shows that whenever $\al$ is a simple root and 
$i \in L(\al)$, we have $B(\al_i, \al) = 0$.  For the other assertion, we consider three cases, according 
as $\al_i$ is an endpoint of the Dynkin diagram $\Gamma$, or the branch point, or one of the other vertices.

If $\al_i$ is an endpoint of $\Gamma$, then the $n-1$ \elementary roots $\al$ with $i \in L(\al)$ are (a) 
the $n-2$ simple roots that are not equal or adjacent to $\al_i$, and (b) the root $\theta_i$ of Definition 
\ref{minroot}, part (3).

If $\al_i$ is the branch point of $\Gamma$, then the $n-1$ \elementary roots $\al$ with $i \in L(\al)$ are (a) 
the $n-4$ simple roots that are not equal or adjacent to $\al_i$, and (b) the three \elementary roots 
$\eta_{h,j}$ of type $2$ with $i \in L(\eta_{h,j})$.

If $\al_i$ is neither an endpoint nor the branch point of $\Gamma$, then the $n-1$ \elementary roots $\al$ with
$i \in L(\al)$ are (a) the $n-3$ simple roots that are not equal or adjacent to $\al_i$; (b) the root $\eta_i$ 
of Definition \ref{minroot}, and (c) the root $\theta_i$ of Definition \ref{minroot}, part (3).
\end{proof}

Recall that the roots of $W$ are partially ordered by stipulating that $\al \leq \be$ if $\be - \al$ is a linear
combination of simple roots with nonnegative coefficients. 

\begin{lemma}\label{minismin}
Let $\al$ be a positive root that is \elementary with respect to the simple root $\al_i$ in type $\ypqr$. 
If $\al$ is a linear combination of positive roots $\be_1, \ldots, \be_r$ with positive 
integer coefficients and with $r > 1$, then not all of the $\be_k$ can be orthogonal to $\al_i$.
\end{lemma}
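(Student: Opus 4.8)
The plan is to argue by contradiction using the structure of elementary roots and a height/support analysis. Suppose $\al$ is elementary with respect to $\al_i$ and that $\al = \sum_{k=1}^r c_k \be_k$ with all $c_k$ positive integers, $r > 1$, and every $\be_k$ orthogonal to $\al_i$. The first step is to pin down what ``orthogonal to $\al_i$'' forces on a positive root $\be_k$: writing $\be_k = \sum_j d_j \al_j$ with $d_j \geq 0$, the condition $B(\al_i, \be_k) = 0$ becomes $2 d_i = \sum_{j \sim i} d_j$, i.e. twice the coefficient of $\al_i$ in $\be_k$ equals the sum of the coefficients of the neighbours of $i$. I would then split into the same three cases as in the statement and proof of Lemma~\ref{minrootcount}, according to whether $\al_i$ is an endpoint, the branch point, or an interior non-branch vertex, and in each case use the explicit description of $\al$ (a simple root, an $\eta$, or a $\theta$) together with the constraint above.

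The core of the argument is a coefficient comparison at and near the vertex $i$. If $\al = \al_i$ is a simple root elementary with respect to itself, then the coefficient of $\al_i$ on the left is $1$, so some $\be_k$ has a positive coefficient of $\al_i$; but a positive root orthogonal to $\al_i$ with a nonzero $\al_i$-coefficient must, by the relation $2 d_i = \sum_{j\sim i} d_j$, also involve a neighbour of $\al_i$, and then the sum $\sum_k c_k \be_k$ has a strictly positive coefficient on that neighbour, contradicting $\al = \al_i$. For $\al$ of type $2$ or $3$, the coefficient of $\al_i$ in $\al$ is the maximum coefficient appearing (it is $1$ for a type-$2$ root $\eta_{h,j}$ with $i$ its midpoint, and for a type-$3$ root $\theta_i$ the support near the far endpoint $i$ has coefficient $1$ there while interior coefficients are $2$); in all cases I would show that matching the coefficient of $\al_i$ forces at least one $\be_k$ to have a nonzero $\al_i$-coefficient, and then the orthogonality relation $2 d_i = \sum_{j \sim i} d_j$ for that $\be_k$ forces its neighbour-coefficients to overshoot what $\al$ allows once we remember all $c_k \geq 1$ and all other $\be_{k'}$ contribute nonnegatively. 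Equivalently and perhaps more cleanly: the hyperplane $\al_i^\perp$ meets the positive cone $\R_{\geq 0}\simproots$ only in the face $\{d_i = 0, d_j = 0 \text{ for } j \sim i\}$ together with lower-dimensional pieces where some neighbour coefficient is forced positive, and a short convexity argument shows $\al$, which has $d_i > 0$, cannot lie in the cone generated by such vectors.

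I would streamline all of this by proving the single clean statement: if $\be$ is a positive root with $B(\al_i,\be)=0$ and the $\al_i$-coefficient $d_i(\be) > 0$, then $\be \geq \eta_{h,j}$ or $\be \geq \theta_i$ in the root order for an appropriate elementary root through $i$ (this is essentially the content of the case analysis and can be extracted from Remark~\ref{uniquetyped} and Lemma~\ref{minrootcount}); combined with the observation that any $\be$ with $d_i(\be)=0$ lies in the span of simple roots not adjacent to or equal to $\al_i$, this partitions the relevant $\be_k$ into ``big'' ones (dominating an elementary root through $i$) and ``away'' ones (supported off the star of $i$). Since $\al$ itself has $d_i(\al) > 0$, some $\be_k$ is big, but a big $\be_k$ already has $d_i(\be_k) \geq d_i(\al)$ and, being a proper part of the sum with positive coefficient and $r>1$, this forces the remaining terms to have negative coefficient somewhere unless $r = 1$ — a contradiction.

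The main obstacle I anticipate is the type-$3$ ($D_m$) case with $\al_i$ a neighbour of the branch point or the branch point itself, where the ``highest root of $D_m$'' has the two coefficient-$1$ fork-ends and coefficient-$2$ spine, so the orthogonality relation at the branch vertex ($2 d_{\text{branch}} = d_{\text{arm}_1} + d_{\text{arm}_2} + d_{\text{spine}}$) is more delicate and I will need the explicit highest-root coefficients for $D_n$ recalled just before Definition~\ref{minroot}; handling the three-element set $L(\al)$ for $D_4$ carefully is the fiddly endpoint of the argument, but it is a finite check. Everything else is a direct nonnegativity/coefficient-matching argument.
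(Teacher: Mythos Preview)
Your proposal has the right instinct (coefficient analysis using $2d_i = \sum_{j\sim i} d_j$), but there is a genuine misreading and an incomplete step.

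First, the simple-root case: you write ``if $\al = \al_i$ is a simple root elementary with respect to itself''. This cannot happen. By Definition~\ref{minroot}, if $\al$ is a simple root $\al_j$ then $L(\al_j)$ consists of vertices \emph{not} equal or adjacent to $j$, so $i \in L(\al)$ forces $\al \ne \al_i$. Thus $\al$ is some simple root of height $1$, and the statement is vacuous because a simple root cannot be written as a positive-integer combination of $r>1$ positive roots. Your coefficient argument here is addressing a case that does not arise.

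Second, the ``big versus away'' conclusion has a gap. Knowing that some $\be_k$ has $d_i(\be_k) \geq 1 = d_i(\al)$ does \emph{not} by itself force the remaining terms to have a negative coefficient: the other $\be_{k'}$ could all be ``away'' roots with $d_i = 0$, and then the $\al_i$-coefficient matches perfectly. You need to propagate the constraint to further coordinates, which you acknowledge is ``delicate'' in the $D_m$ case; but you never actually carry this out, and the claim that a big $\be_k$ must dominate an elementary root through $i$ is asserted without proof.

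The paper's argument avoids all of this coefficient-chasing by invoking a single structural fact in the $\theta_i$ case: inside the $D_m$ subsystem supporting $\theta_i$, the roots orthogonal to $\al_i$ form a subsystem of type $D_{m-2} \cup A_1$, where the $A_1$ factor is $\{\pm\theta_i\}$ and the $D_{m-2}$ factor consists of roots not involving $\al_i$ or its neighbour $\al_j$. Since each $\be_k < \theta_i$ has support contained in the $D_m$ subsystem, every orthogonal $\be_k$ must come from the $D_{m-2}$ component, hence has zero $\al_i$-coefficient, contradicting $d_i(\al) = 1$. The $\eta$ case is handled by simply listing the five positive roots below $\eta_{h,j}$ and observing none is orthogonal to $\al_i$. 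This is much shorter and requires no delicate endpoint or branch-point bookkeeping.
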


\begin{proof}
Note that the hypotheses imply that we have $\be_k < \al$ for all $k$.
If $\al$ is a simple root, then the statement holds vacuously. 

If $\al = \eta_{h,j} = \al_h + \al_i + \al_j$, then the only positive roots $\be < \al$ are $$
\be \in \{\al_h, \al_i, \al_j, \al_h + \al_i, \al_i + \al_j\}
,$$ and none of the roots in this list is orthogonal to $\al_i$.

Finally, suppose that $\al = \theta_i$, and consider the parabolic subgroup of type $D_m$ in which $\theta_i$ is 
the highest root. Define $\al_j$ to be the simple root adjacent to $\al_i$ in the support of $\theta$. 
It is well known (and mentioned in \cite[\S4]{allcock13}) that the roots orthogonal to $\al_i$ in $D_m$ 
form a root system of type $D_{m-2} \cup A_1$, if we interpret $D_3$ as $A_3$ and $D_2$ as $A_1 \cup A_1$.
The roots in the $D_{m-2}$ component are those that do not involve $\al_i$ or $\al_j$, and the roots in the
$A_1$ component are $\{\pm \theta_i\}$. It follows that the only positive roots $\be < \theta_i$ that are 
orthogonal to $\al_i$ come from the $D_{m-2}$ component, so that $\al_i$ and $\al_j$ both appear
with zero coefficient in every $\be_k$. This contradicts the fact that $\al_i$ appears with a nonzero coefficient 
in $\al$.
\end{proof}

\begin{defn}\label{def:canbas}
Let $V$ be the reflection representation associated with the Dynkin diagram $\ypqr$. We define $\B = \B(a, b, c)$ 
to be the subset of $S^2(V)$ consisting of all elements of the form $\al_i \vee \be$, where $\al_i \in \simproots$ 
and where $\be$ is \elementary with respect to $\al_i$. 
\end{defn}

\begin{theorem}\label{canbas}
Let $\Gamma$ be a Dynkin diagram of type $\ypqr$, let $n=a+b+c+1$, and let $B$ be the bilinear form on the
associated reflection representation $V$. The set $\B = \B(a,b,c)$ is a basis for the submodule $M = \ker B$ of 
$S^2(V)$.
\end{theorem}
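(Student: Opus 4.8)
The plan is to use a dimension count together with a linear-independence argument. By Lemma~\ref{codimone}, $\dim M = \binom{n+1}{2}-1$, so it suffices to establish three things: (i) $\B \subseteq M$; (ii) $|\B| = \binom{n+1}{2}-1$; and (iii) $\B$ is linearly independent. Claim~(i) is immediate from Lemma~\ref{minrootcount}: if $\al_i\vee\be \in \B$, then $\be$ is \elementary with respect to $\al_i$, so $B(\al_i,\be)=0$, hence $B(\al_i\vee\be)=2B(\al_i,\be)=0$ and $\al_i\vee\be \in \ker B = M$.

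For claim~(ii), Lemma~\ref{minrootcount} says there are exactly $n(n-1)$ pairs $(\al_i,\be)$ with $\be$ \elementary with respect to $\al_i$. Two distinct such pairs $(\al_i,\be)$, $(\al_{i'},\be')$ can determine the same element of $S^2(V)$ only when $\be$ and $\be'$ are both simple, in which case $\be=\al_{i'}$ and $\be'=\al_i$ with $\al_i$ and $\al_{i'}$ orthogonal. Since $\Gamma$ is a tree on $n$ vertices it has $\binom{n}{2}-(n-1)=\binom{n-1}{2}$ orthogonal pairs of simple roots, so $|\B| = n(n-1)-\binom{n-1}{2} = \binom{n+1}{2}-1 = \dim M$.

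The heart of the matter is claim~(iii). I would first split $\B$ according to the three types of Definition~\ref{minroot}: the type~1 elements $\al_i\vee\al_j$ (with $i\neq j$ and $A_{ij}=0$), the type~2 elements $\al_j\vee\eta$ (with $\eta=\al_h+\al_j+\al_k$ for a path $h$--$j$--$k$; if $j$ is the branch point there are three of these, one per pair of neighbours), and the type~3 elements $\al_i\vee\theta$, where $\theta$ is the highest root of a standard parabolic subsystem of type $D_m$ and $i\in L(\theta)$. The type~1 elements are precisely the standard basis vectors of $S^2(V)$ supported on orthogonal pairs of simple roots; writing $M_1$ for their span, it therefore suffices to prove that the images in $S^2(V)/M_1$ of the type~2 and type~3 elements are linearly independent. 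Now $S^2(V)/M_1$ has a basis consisting of the diagonal vectors $\al_v\vee\al_v$ ($v$ a vertex) and the edge vectors $\al_v\vee\al_w$ ($\{v,w\}$ an edge of $\Gamma$). In this quotient, $\al_j\vee\eta$ becomes $\al_j\vee\al_j + \al_j\vee\al_h + \al_j\vee\al_k$, and, crucially, $\al_i\vee\theta$ becomes $\al_i\vee\al_i + 2\,\al_i\vee\al_{p}$, where $p$ is the \emph{unique} neighbour of $i$ lying in the support of $\theta$---namely the neighbour of $i$ on the path towards the branch point---which occurs in $\theta$ with coefficient $2$. This structural fact follows from the explicit shape of the highest root of $D_m$ together with Remark~\ref{uniquetyped}, and it includes the case $m=4$, where $i$ is a neighbour of the branch point and $p$ is the branch point itself.

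It then remains to show that these $2(n-1)$ vectors of $S^2(V)/M_1$ are linearly independent, which I would do by a coefficient-chase after rooting $\Gamma$ at the branch point. Given a hypothetical linear relation, reading off the coordinate of $\al_v\vee\al_v$ expresses the type~3 coefficient at $v$ in terms of the type~2 coefficient at $v$ (and, at the branch point, says the three type~2 coefficients there sum to zero); then reading the edge coordinates $\al_v\vee\al_w$ in order from the leaves of $\Gamma$ inward forces all type~2 and type~3 coefficients along each branch to vanish; and finally the three type~2 elements based at the branch point are seen to be independent by inspecting their three edge coordinates, using that $F$ has characteristic $0$. I expect the one genuinely fiddly point to be carrying out this last chase uniformly for the three branches, whose lengths $a$, $b$, $c$ are arbitrary; everything else is a direct unwinding of Definition~\ref{minroot} and Lemmas~\ref{codimone} and~\ref{minrootcount}.
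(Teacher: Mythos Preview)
Your argument is correct and proceeds by a genuinely different route from the paper.  The paper proves Theorem~\ref{canbas} by induction on $\max(a,b,c)$: the base case is $D_4$, handled by an explicit nine-element check, and the inductive step adjoins one leaf to the end of the longest branch, verifying that the $n$ new elements of~$\B$ are independent of~$\B'$ by reading off coefficients of $\al_1\vee\al_1$ and $\al_1\vee\al_2$.  Your approach is a direct, non-inductive one: a closed-form count of~$|\B|$ followed by a global type decomposition and a coefficient chase in the quotient $S^2(V)/M_1$.

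Each has its virtues.  The paper's induction keeps every step small and makes the passage from $\ygc{a}{b}{c-1}$ to $\ypqr$ transparent, which meshes well with the inductive viewpoint used elsewhere in the paper.  Your argument, on the other hand, exposes more structure: it separates the three types of elementary roots cleanly, gives a closed-form cardinality count $n(n-1)-\binom{n-1}{2}=\binom{n+1}{2}-1$ without recursion, and the observation that $\overline{\al_i\vee\theta_i}=\al_i\vee\al_i+2\,\al_i\vee\al_{p(i)}$ in the quotient is a pleasant structural fact in its own right.  The leaf-to-root chase you outline does go through exactly as you describe (the final $3\times 3$ system at the branch point has matrix $\left(\begin{smallmatrix}1&1&0\\1&0&1\\0&1&1\end{smallmatrix}\right)$, invertible in characteristic~$\neq 2$).

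One small point worth tightening: your counting argument asserts that two distinct pairs $(\al_i,\be)$ and $(\al_{i'},\be')$ give the same element of $S^2(V)$ only when both $\be$ and $\be'$ are simple.  This is true, but it implicitly uses the fact that the components of a symmetrized tensor are determined up to order and scalar (Lemma~\ref{lem:components} in the paper, which appears later); you should either prove this elementary fact inline or cite it explicitly.
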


\begin{proof}
Lemma \ref{minrootcount} implies that every element of $\B$ lies in $M$.
The proof now reduces to showing that $\B = \B(a,b,c)$ is linearly independent and has cardinality
$\binom{n+1}{2} - 1$, which by Lemma \ref{codimone} is equal to $\dim(M)$.
We prove these two claims by induction on $k = \max(a, b, c)$. 

The base case, $k=1$, corresponds to $\Gamma$ being of type $D_4$.
We label the vertices of $\Gamma$ by $\{1, 2, 3, 4\}$, where $2$ is the branch point. The canonical basis is then given
by $$
\{  
\al_1 \vee \theta_1, \ \al_3 \vee \theta_3, \ \al_4 \vee \theta_4, \ 
\al_2 \vee \eta_{1,3}, \ \al_2 \vee \eta_{1,4}, \ \al_2 \vee \eta_{3,4}, \ 
\al_1 \vee \al_3, \ \al_1 \vee \al_4, \ \al_3 \vee \al_4
\}
,$$ which has size $9 = \binom{5}{2} - 1$, as required. 

Suppose for a contradiction that there is a nontrivial dependence relation between these nine elements.
We can show that $\B$ is linearly independent by expanding
everything in terms of the standard basis $\{\al_i \vee \al_j : 1 \leq i, j \leq n\}$ of $S^2(V)$, as follows.
For each $i \in \{1, 3, 4\}$, the only element of $\B$ with $\al_i \vee \al_i$ in its support is $\al_i \vee \theta_i$.
This implies that the elements $\al_i \vee \theta_i$ for $i \in \{1, 3, 4\}$ appear with coefficient zero in the 
dependence relation. Next, equating coefficients of $\al_1 \vee \al_2$ implies that $\al_2 \vee \eta_{1,3}$ and 
$\al_2 \vee \eta_{1,4}$ occur with equal and opposite coefficients in the dependence relation. Extending this
argument to all standard basis elements $\al_i \vee \al_2$ for $i \in \{1, 3, 4\}$ implies that all basis elements 
$\al_2 \vee \eta_{1,3}$, $\al_2 \vee \eta_{1,4}$ and $\al_2 \vee \eta_{3,4}$ occur with coefficient zero in the 
dependence relation. The remaining elements of $\B$, $\al_1 \vee \al_3$, $\al_1 \vee \al_4$ and $\al_3 \vee \al_4$,
are all standard basis elements and are therefore linearly independent, completing the base case.

For the inductive step, we will prove that the statements hold when $\max(a,b,c) = k+1$, assuming that
they hold when $\max(a,b,c)=k$. Since we now have $\max(a,b,c) > 1$, it follows that $n=a+b+c+1 > 4$.
We assume without loss of generality that $a \leq b \leq c$. Denote the vertex of $\Gamma$ at 
the end of the $c$-branch by $1$, and denote the vertex next to it by $2$; note that the hypothesis $n > 4$
guarantees that $2$ is not the branch point of $\Gamma$. Let $V'$ be the reflection representation 
in type $\ygc{a}{b}{c-1}$, so that the set $\simproots \backslash \{\al_1\}$ is a basis for $V'$, and let 
$\B'=\B(a,b,c-1)$, so that $\B' \subset \B$. The elements of $\B \backslash \B'$ are $\al_1 \vee \theta_1$, 
$\al_2 \vee \eta_2$, and the $n-2$ elements $\al_1 \vee \al_j$ for $j \not\in \{1, 2\}$. This implies that
$|\B| = |\B'|+n$, and therefore by induction that $$
|\B| = |\B'| + n = \binom{n}{2} - 1 + n = \binom{n+1}{2} - 1
,$$ as required.

It remains to show that $\B$ is linearly independent. If not, then the linear independence of $\B'$ (by induction)
means that we must have $$
\sum_{b_i \in \B \backslash \B'} \lambda_i b_i = \sum_{b'_j \in \B'} \mu_i b'_j
,$$ for some scalars $\lambda_i$ and $\mu_j$, where some $\lambda_i$ is nonzero. 
Now express both sides of this equation with respect to the standard
basis of $S^2(V)$, so that the right hand side is a linear combination of the standard basis of $S^2(V')$.
The only element of $\B$ with a nonzero coefficient of $\al_1 \vee \al_1$ is $\al_1 \vee \theta_1$, so equating
coefficients of $\al_1 \vee \al_1$ in the above equation implies that $\al_1 \vee \theta_1$ appears with coefficient
zero. The only elements of $\B$ with a nonzero coefficient of $\al_1 \vee \al_2$ are $\al_1 \vee \theta_1$ and
$\al_2 \vee \eta_2$, so equating coefficients of $\al_1 \vee \al_2$ implies that $\al_2 \vee \eta_2$ appears with
coefficient zero. The other elements of $\B \backslash \B'$ are all standard basis elements that do not lie
in $S^2(V')$, so they also appear with coefficient zero. This contradiction completes the proof.
\end{proof}

\section{Stabilizers of real roots}\label{sec:stab}

Recall that a root of $W$ is called {\it real} if it is $W$-conjugate to a simple root. We denote the set of real roots
of $W$ by $\realroots$. In Section \ref{sec:stab}, we describe the relationship between the basis $\B$ and the stabilizers 
of the real roots of $W$. 

To do so, it is helpful to introduce some graph theoretic terminology. For each
integer $k \geq -1$, there is a notion of attaching a path of length $k$ to a graph $G$ with $n$ vertices to form a 
graph with $n+k$ vertices.

\begin{defn}\label{def:attach}
Let $k \geq 1$ be an integer, and let $\al$ be a vertex of a graph $G$. To {\it attach a path of length 
$k$ to $G$ at $\al$}, we take the disjoint union of $G$ and a path $P$ with $k$ vertices, and then 
add an edge between $\al$ and one of the endpoints of $P$.

To {\it attach a path of length $0$ to $G$ at $\al$}, we simply take the graph $G$ itself. To
{\it attach a path of length $-1$ to $G$ at $\al$}, we remove the vertex $\al$ and all edges incident to $\al$.
\end{defn}

\begin{defn}\label{def:hpqr}
Let $a, b, c \geq -1$ be integers, and let $H$ be the 6-cycle $h_1$--$h_2$--$h_3$--$h_4$--$h_5$--$h_6$--$h_1$.
We define the $\hexg{a}{b}{c}$ to be the graph obtained by
attaching paths of lengths $a$, $b$, and $c$ to $H$ at the vertices $h_1$, $h_3$, and $h_5$, respectively.
\end{defn}


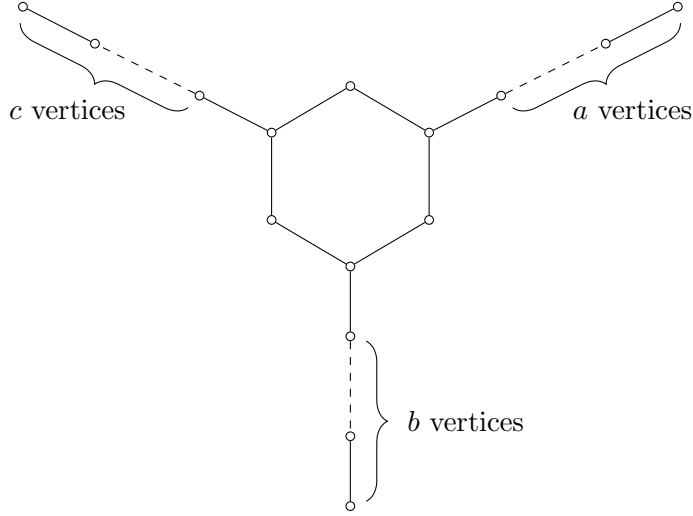
\begin{figure}[ht!]
    \centering
\begin{tikzpicture}
    \node (c) {};
            \node[main node] (1) [above right=0.4cm and 0.866cm of c] {};
            \node[main node] (x1) [above right=0.4cm and 0.866cm of 1] {};
            \node[main node] (y1) [above right=0.6cm and 1.299cm of x1] {};
            \node[main node] (z1) [above right=0.4cm and 0.866cm of y1] {};
            \node[main node] (2) [below right=0.4cm and 0.866cm of c] {};
            \node[main node] (3) [below = 1cm of c] {};
            \node[main node] (x3)[below =0.8cm of 3] {};
            \node[main node] (y3)[below =1.2cm of x3] {};
            \node[main node] (z3)[below =0.8cm of y3] {};
            \node[main node] (4) [below left=0.4cm and 0.866cm of c] {};
            \node[main node] (5) [above left=0.4cm and 0.866cm of c] {};
            \node[main node] (x5) [above left=0.4cm and 0.866cm of 5] {};
            \node[main node] (y5) [above left=0.6cm and 1.299cm of x5] {};
            \node[main node] (z5) [above left=0.4cm and 0.866cm of y5] {};
            \node[main node] (6) [above = 1cm of c] {};

            \path[draw]
            (1)--(2)--(3)--(4)--(5)--(6)--(1)
            (1)--(x1)
            (y1)--(z1)
            (3)--(x3)
            (y3)--(z3)
            (5)--(x5)
            (y5)--(z5);

            \path[draw,dashed]
            (x1)--(y1)
            (x3)--(y3)
            (x5)--(y5);

            \draw[decorate, decoration={brace,amplitude=8pt, mirror,
            raise=6pt}] (x1)--(z1) node [midway, xshift=1.5em, yshift=-2em] {$a$ vertices};

            \draw[decorate, decoration={brace,amplitude=8pt,
            raise=6pt}] (x3)--(z3) node [midway, xshift=4em] {$b$ vertices};

            \draw[decorate, decoration={brace,amplitude=8pt,
            raise=6pt}] (x5)--(z5) node [midway, xshift=-1.5em, yshift=-2em] {$c$ vertices};
\end{tikzpicture}
    \caption{The graph $H_{a,b,c}$ }
    \label{fig:H}
\end{figure}

\begin{rmk}\label{rmk:qabc}
The graphs $\hexg{a}{b}{c}$ are denoted by $Q_{a+1, b+1, c+1}$ in the ATLAS of Finite Groups \cite[pp 232--233]{conway85},
where they play an important role in the structure of the Monster simple group.
\end{rmk}

\begin{lemma}\label{lem:componentcount}
The number of connected components of $\hexg{a}{b}{c}$, where $a \leq b \leq c$, is $3$ if
$a=b=c=-1$, is $2$ if $a=b=-1$ and $c \geq 0$, and is $1$  otherwise. 
\end{lemma}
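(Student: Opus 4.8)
\emph{Proof proposal.} My plan is to reduce the claim to a four-way case analysis on how many of the branch lengths $a \le b \le c$ equal $-1$. The work is minimized by two observations about the attachment operation of Definition~\ref{def:attach}. First, attaching a path of length $\ge 0$ at a vertex $v$ never changes the number of connected components of a graph: a length-$0$ attachment does nothing, and a length-$k$ attachment with $k\ge 1$ glues on a path that stays joined to $v$. Second, attaching a path of length $-1$ at $v$ simply deletes $v$ and its incident edges. Since the three attachment sites $h_1,h_3,h_5$ of the defining $6$-cycle of $\hexg{a}{b}{c}$ are pairwise non-adjacent, these possible deletions do not interact, so the component count of $\hexg{a}{b}{c}$ equals that of the graph obtained from the $6$-cycle $h_1$--$\cdots$--$h_6$--$h_1$ by deleting exactly those of $h_1, h_3, h_5$ whose associated branch length is $-1$.

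I would then run through the four cases permitted by $a\le b\le c$. If $a \ge 0$, no vertex is deleted, the $6$-cycle survives, and $\hexg{a}{b}{c}$ is connected. If $a = -1$ and $b \ge 0$, only $h_1$ is deleted and the $6$-cycle becomes the path $h_2$--$h_3$--$h_4$--$h_5$--$h_6$, still connected. If $a = b = -1$ and $c \ge 0$, deleting $h_1$ and $h_3$ destroys every cycle edge except $h_4$--$h_5$ and $h_5$--$h_6$, leaving the path $h_4$--$h_5$--$h_6$ (into which the path attached at $h_5$ merges) together with the isolated vertex $h_2$; this is $2$ components. If $a = b = c = -1$, each of the six cycle edges has an odd-indexed endpoint and is therefore destroyed, leaving the three isolated vertices $h_2, h_4, h_6$. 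These four cases are exhaustive and mutually exclusive and yield the stated counts.

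For the final assertion, observe that the analysis shows $\hexg{a}{b}{c}$ is disconnected precisely when $a = b = -1$. Under the correspondence between these hexagon-based graphs and $Y$-shaped Dynkin diagrams recorded by $\hpqr = \hexg{a-2}{b-2}{c-2}$, the condition $a = b = -1$ says that two of the branches of the associated $Y$-diagram have length $1$, i.e.\ that the diagram is of type $D_m$; since $W(D_m)$ is a finite Coxeter group, taking the contrapositive shows that $\hexg{a}{b}{c}$ is connected whenever $W$ is infinite. I do not anticipate a genuine obstacle in any of this; the only point needing care is the convention that a length-$(-1)$ attachment removes a vertex (rather than attaching anything), and keeping careful track of which of the six cycle edges survive each deletion.
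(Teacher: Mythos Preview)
Your proposal is correct and follows exactly the approach the paper intends: the paper's own proof is a single sentence (``This follows from the definition of $\hexg{a}{b}{c}$. The only cases where the graph is disconnected correspond to the Weyl group being a finite group of type $D_n$''), and your case analysis is simply the spelled-out version of that sentence. The reduction via the two observations about attachment (that nonnegative-length attachments preserve component count and that the deletion sites $h_1,h_3,h_5$ are pairwise non-adjacent) is the natural way to make the argument precise, and your identification of the disconnected cases with type $D_m$ matches the paper exactly.
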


\begin{proof}
This follows from the definition of $\hexg{a}{b}{c}$. 
\end{proof}

In the case of the Dynkin diagram $\ypqr$, the stabilizer in $W$ of a real root has been determined explicitly by
Allcock \cite{allcock13}, using a result of Brink \cite{brink96}.

\begin{theorem}[Allcock, Brink]\label{thm:hpqr}
Let $W$ be a Weyl group of type $\ypqr$ with $a, b, c \geq 1$, and let $\al$ be a real root of $W$. Then the stabilizer 
$W_\al = \Stab_W(\al)$ of $\al$ in $W$ is generated by the reflections it contains, and $W_\al$ is a simply laced Weyl 
group of type $\hpqr$.
\end{theorem}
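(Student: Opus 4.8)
The plan is to prove the theorem of Allcock and Brink by reducing it to the combinatorial description of root stabilizers already available in the literature, rather than reproving it from scratch. The key external input is Brink's theorem \cite{brink96}, which for any Coxeter group $W$ describes the stabilizer of a simple root $\al_i$ as a reflection subgroup whose Coxeter diagram is obtained by a specific ``folding/odd-contraction'' procedure applied to the Coxeter diagram of $W$ with the vertex $i$ removed; Allcock \cite{allcock13} then identified the diagram explicitly for the $\ypqr$ case. So the first step is to recall Brink's result precisely: the stabilizer $W_{\al_i}$ is generated by the reflections $s_j$ with $j$ not adjacent to $i$ together with reflections $s_j s_k s_j$ for certain paths, and its simple system is an explicitly describable set $\{\be_{i,1}, \dots, \be_{i,n-1}\}$. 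Since any real root $\al$ is $W$-conjugate to a simple root $\al_i$, and conjugation is an isomorphism of reflection groups intertwining stabilizers, it suffices to treat $\al = \al_i$.

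Next I would carry out the case analysis on the position of the vertex $i$ in the Y-shaped diagram $\ypqr$: $i$ can be an endpoint of one of the three branches, the trivalent branch vertex, or an interior vertex of a branch. In each case, remove the vertex $i$ and apply Brink's contraction rule; I expect the $\be_{i,j}$ to correspond exactly to the \elementary roots with respect to $\al_i$ catalogued in Lemma \ref{minrootcount} (there are $n-1$ of them, which is the correct rank). The heart of the verification is to compute the pairwise inner products $B(\be_{i,j}, \be_{i,k})$ using the known expansions of the \elementary roots $\al_j$, $\eta_{\cdot,\cdot}$, $\theta_\cdot$ in terms of simple roots, and to check that the resulting Cartan matrix is exactly that of $\hpqr = \hexg{a-2}{b-2}{c-2}$. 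Concretely, when $i$ is a branch endpoint, deleting $i$ leaves a path that, together with the long branch roots and the type-3 root $\theta_i$ reaching around, should close up the six-cycle $H$; when $i$ is the branch vertex, the three type-2 roots $\eta_{h,j}$ play the role of the three ``spokes'' attaching to the hexagon; and for $i$ interior to a branch, a combination of $\eta_i$ and $\theta_i$ does the job. One must also invoke Lemma \ref{lem:componentcount} to see that the sub-diagram is connected precisely when expected, and check the rank bookkeeping $|H_{a-2,b-2,c-2}| = (a-2)+(b-2)+(c-2)+6 = n-1$.

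The main obstacle I anticipate is not conceptual but bookkeeping: matching the vertex labels of the hexagonal graph $\hexg{a-2}{b-2}{c-2}$ against the \elementary roots so that every edge and non-edge of the Cartan matrix is verified, while keeping track of which branch of $\ypqr$ shrinks by how much depending on where $i$ sits (the branch containing $i$ loses vertices, the other two do not, and the six-cycle absorbs the ``slack''). A secondary subtlety is the degenerate behavior when one or more of $a-2, b-2, c-2$ is $0$ or $-1$, i.e. when a branch of the hexagon graph has length zero or a vertex is deleted from the hexagon itself; here one must be careful that Brink's procedure still yields a genuine simply laced Weyl diagram and that the claimed identification with $\hpqr$ remains valid under the conventions of Definition \ref{def:attach}. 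Since the result is not original to this paper, I would in the write-up state that the detailed diagram-matching is carried out in \cite{allcock13} and \cite{brink96} and limit the exposition to indicating the correspondence between the simple roots $\be_{i,j}$ of $W_{\al_i}$ and the \elementary roots of Lemma \ref{minrootcount}, as this correspondence is what is actually used in the sequel (notably in Theorem \ref{cbminimal}).
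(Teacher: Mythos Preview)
Your proposal is correct and follows the same overall route as the paper: reduce to a simple root $\al_i$ by conjugacy, then invoke Brink's and Allcock's results. The paper's proof is, however, considerably more streamlined than what you outline. It quotes Brink's result in the form of the semidirect product decomposition $W_{\al_i} \cong W_\Omega \rtimes \pi_1(\Delta^{\text{odd}}, s)$ and then observes that since $\ypqr$ is a tree with all odd edge labels, the fundamental group factor is trivial; this is precisely how it establishes the first claim, that $W_{\al_i}$ is generated by the reflections it contains. You assert this generation property as part of Brink's theorem without isolating the tree argument, which is a small gap worth making explicit. For the second claim (the diagram is $\hpqr$), the paper simply cites the discussion following \cite[Theorem 13]{allcock13} and does not carry out any case analysis or Cartan-matrix computation.

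The explicit identification you propose---matching the simple roots $\be_{i,j}$ of $W_{\al_i}$ with the \elementary roots of Lemma \ref{minrootcount} and verifying their pairwise inner products---is not part of the paper's proof of this theorem at all. That identification is the content of the separate Theorem \ref{cbminimal}, which is proved later (and more cheaply, via Lemma \ref{minismin} and a rank count, rather than by computing the full Cartan matrix). So your plan effectively merges the proofs of Theorem \ref{thm:hpqr} and Theorem \ref{cbminimal} into one argument; this is valid, but the paper keeps them apart and leans more heavily on the cited literature for the present theorem.
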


\begin{proof}
Since $\ypqr$ is simply laced, all real roots are $W$-conjugate to each other and therefore have conjugate stabilizers. 
It therefore suffices to prove the theorem in the case where $\al$ is a simple root $\al_i$ associated to a 
Coxeter generator $s \in W$.

It follows from the main result of \cite{brink96} (see also \cite[Corollary 7]{allcock13}) that $W_{\al_i}$ can be expressed
as a semidirect product $W_\Omega \rtimes \Gamma_\Omega$, where $W_\Omega$ is the subgroup generated by all the reflections
that fix $\al_i$, and $\Gamma_\Omega$ is the free group $\pi_1(\Delta^{\text{\rm odd}}, s)$. Since every edge in $\ypqr$ has
an odd label of $3$, the graph $\Delta^{\text{\rm odd}}$ is simply the Dynkin diagram $\Gamma$. The connected component of
$\Gamma$ containing $s$ has no circuits, which means that the free group in question is trivial, and that 
$W_\al \cong W_\Omega$.

The proof is completed from the discussion following \cite[Theorem 13]{allcock13}, which describes an equivalent construction
of the graphs $\hpqr$ as the graphs of $W_\Omega$.
\end{proof}

\begin{exa}\label{exa:hexamples}
Let $W = W(\ypqr)$ and let $\al$ be a real root of $W$. If $W$ is a Weyl group of type 
$D_4 = \ygc{1}{1}{1}$,
$D_5 = \ygc{1}{1}{2}$, or 
$E_8 = \ygc{1}{2}{4}$, 
then by Theorem \ref{thm:hpqr} the corresponding Dynkin diagrams $\hexg{a-2}{b-2}{c-2}$ for $W_\al$ are as pictured from left 
to right in Figure \ref{fig:Hexamples}. If $W$ is the affine Weyl group of type $\widetilde{E}_6 = \ygc{2}{2}{2}$, then the 
corresponding Dynkin diagram $\hexg{a-2}{b-2}{c-2} = \hexg{0}{0}{0}$ for $W_\al$ is simply a hexagon, which equals the Dynkin 
diagram of type $\widetilde{A}_5$. In other words, the stabilizer of each real root in type affine $E_6$ is isomorphic (as a
reflection group) to the Weyl group of type affine $A_5$.
\end{exa}

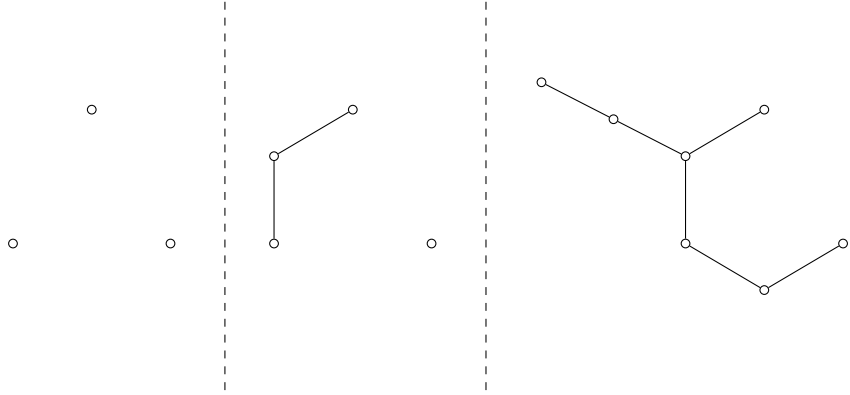
\begin{figure}[ht!]
    \centering
\begin{tikzpicture}
    \node (c) {};
            \node[main node] (2) [below right=0.4cm and 0.866cm of c] {};
            \node[main node] (4) [below left=0.4cm and 0.866cm of c] {};
            \node[main node] (6) [above = 1cm of c] {};

            \node(a) [above right=2.5cm and 1.5cm of c] {};
            \node(b) [below right=2.5cm and 1.5cm of c] {};

            \node (1c) [right=3.2cm of c] {};
            \node[main node] (12) [below right=0.4cm and 0.866cm of 1c] {};
            \node[main node] (14) [below left=0.4cm and 0.866cm of 1c] {};
            \node[main node] (15) [above left=0.4cm and 0.866cm of 1c] {};
            \node[main node] (16) [above = 1cm of 1c] {};

            \node(x) [above right=2.5cm and 1.5cm of 1c] {};
            \node(y) [below right=2.5cm and 1.5cm of 1c] {};

            \node (2c) [right=5.2cm of 1c] {};
            \node[main node] (22) [below right=0.4cm and 0.866cm of 2c] {};
            \node[main node] (23) [below = 1cm of 2c] {};
            \node[main node] (24) [below left=0.4cm and 0.866cm of 2c] {};
            \node[main node] (25) [above left=0.4cm and 0.866cm of 2c] {};
            \node[main node] (2x5) [above left=0.4cm and 0.866cm of 25] {};
            \node[main node] (2y5) [above left=0.4cm and 0.866cm of 2x5] {};
            \node[main node] (26) [above = 1cm of 2c] {};

            \path[draw]
            (14)--(15)--(16)
            (22)--(23)--(24)--(25)--(26)
            (25)--(2x5)--(2y5);

            \path[draw,dashed] (a)--(b)
            (x)--(y);

\end{tikzpicture}
\caption{The $H$-diagrams corresponding to the Weyl groups $D_4, D_5$ and
$E_8$}
    \label{fig:Hexamples}
\end{figure}

\begin{theorem}\label{cbminimal}
Let $W$ be a Weyl group of type $\ypqr$ with $a, b, c \geq 1$, and let $n = a+b+c+1$ be the rank of $W$.
\begin{itemize}
\item[\rm (i)]{Let $\al_i$ be a simple root of $W$, and regard
the stabilizer $W_{\al_i}$ as a Weyl group as in Theorem \ref{thm:hpqr}. Then the simple roots of $W_{\al_i}$ are
precisely the $(n-1)$ \elementary roots with respect to $i$ from Definition \ref{minroot}.}
\item[\rm (ii)]{The canonical basis $\B(a,b,c)$ consists of all elements $\al_i \vee \be$,
where $\al_i$ is a simple root of $W$, and $\be$ is a simple root of the stabilizer $W_{\al_i}$.}
\end{itemize}
\end{theorem}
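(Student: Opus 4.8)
The plan is to prove part (i) in detail and then observe that part (ii) is immediate from Definition~\ref{def:canbas}. For (i) I would first set up the reflection–subgroup picture. Since $w_\be(\al_i) = \al_i - B(\al_i,\be)\be$, a reflection $w_\be$ of $W$ lies in $W_{\al_i} = \Stab_W(\al_i)$ exactly when $B(\al_i,\be)=0$; writing $\Phi^{(i)}_+ := \{\be \in \posroots : B(\al_i,\be)=0\}$, Theorem~\ref{thm:hpqr} then says that $W_{\al_i}$ is generated by $\{w_\be : \be \in \Phi^{(i)}_+\}$ and is a simply laced Weyl group of type $\hpqr$. From the general theory of reflection subgroups of Coxeter groups (Deodhar, Dyer), which is what underlies the construction behind Theorem~\ref{thm:hpqr}, $W_{\al_i}$ carries a canonical simple system $\Pi^{(i)}$ — this is precisely what ``the simple roots of $W_{\al_i}$'' means in the statement — with the following properties: $\Pi^{(i)}$ consists of positive real roots of $W$ orthogonal to $\al_i$; the set $\Phi^{(i)}_+$ is exactly the set of positive roots of the Coxeter system $W_{\al_i}$, so every element of it is a nonnegative integer combination of $\Pi^{(i)}$; and each simple reflection $w_\gamma$ ($\gamma \in \Pi^{(i)}$) permutes $\Phi^{(i)}_+ \setminus \{\gamma\}$. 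Finally, since $\hpqr = H_{a-2,b-2,c-2}$ has $6+(a-2)+(b-2)+(c-2)=a+b+c=n-1$ vertices, we have $|\Pi^{(i)}| = n-1$.

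The heart of the argument is the claim that every \elementary root $\al$ with respect to $i$ lies in $\Pi^{(i)}$. To prove it I would argue by contradiction. By Lemma~\ref{minrootcount}, $\al$ is a positive real root orthogonal to $\al_i$, hence $\al \in \Phi^{(i)}_+$; suppose $\al \notin \Pi^{(i)}$. Writing $\al = \sum_{\gamma \in \Pi^{(i)}} c_\gamma \gamma$ with $c_\gamma \in \Z_{\geq 0}$, we get $2 = B(\al,\al) = \sum_\gamma c_\gamma B(\al,\gamma)$, so $m := B(\al,\gamma) \geq 1$ for some $\gamma \in \Pi^{(i)}$ (here $m \in \Z$ because real roots lie in the root lattice, on which $B$ is integer-valued). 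Since $\al \neq \gamma$, the root $w_\gamma(\al) = \al - m\gamma$ again lies in $\Phi^{(i)}_+$, and it is distinct from $\gamma$ (otherwise $\al = w_\gamma(\gamma) = -\gamma$ would not be positive). Thus $\al = 1\cdot w_\gamma(\al) + m\cdot\gamma$ expresses $\al$ as a positive integer combination of the two distinct positive roots $w_\gamma(\al)$ and $\gamma$, both orthogonal to $\al_i$, with $r = 2 > 1$. This contradicts Lemma~\ref{minismin}, so in fact $\al \in \Pi^{(i)}$.

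To finish (i), I would combine this with Lemma~\ref{minrootcount}: there are exactly $n-1$ (distinct) \elementary roots with respect to $i$, each lying in the $(n-1)$-element set $\Pi^{(i)}$, so the two sets coincide. For (ii): by Definition~\ref{def:canbas}, $\B(a,b,c)$ is the set of all $\al_i \vee \be$ with $\al_i \in \simproots$ and $\be$ \elementary with respect to $\al_i$; by (i) this is the same as the set of all $\al_i \vee \be$ with $\al_i \in \simproots$ and $\be$ a simple root of $W_{\al_i}$, which is the assertion of (ii).

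The step I expect to require the most care is the bookkeeping in the first paragraph: importing the correct root-system structure for the reflection subgroup $W_{\al_i}$ — in particular that $\Phi^{(i)}_+$ really is the positive system of $W_{\al_i}$ (equivalently, that every reflection of $W$ contained in $W_{\al_i}$ is a reflection of the Coxeter system $W_{\al_i}$) and that its simple reflections act on $\Phi^{(i)}_+$ as in an ordinary Coxeter system. These are standard facts about reflection subgroups, but they must be invoked precisely, since they are exactly what legitimizes the norm-$2$ and simple-reflection manipulations in the second paragraph. Everything else — the cardinality count via Lemma~\ref{minrootcount} and the indecomposability input from Lemma~\ref{minismin} — is routine once this is in place.
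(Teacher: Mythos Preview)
Your proposal is correct and follows essentially the same approach as the paper: both arguments combine the cardinality count from Theorem~\ref{thm:hpqr} and Lemma~\ref{minrootcount} with the indecomposability statement of Lemma~\ref{minismin} to force the set of elementary roots to coincide with $\Pi^{(i)}$, and then read off (ii) from the definition of $\B$. The only difference is one of detail: where the paper simply invokes the standard characterization of simple roots as those positive roots not expressible as positive integer combinations of other positive roots, you explicitly manufacture such a decomposition (via $\al = w_\gamma(\al) + m\gamma$) from the reflection-subgroup structure, which makes the appeal to Lemma~\ref{minismin} more transparent.
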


\begin{proof}
By Theorem \ref{thm:hpqr}, the group $W_{\alpha_i}$ has $(a+b+c)$ simple roots. In general, the simple roots can be 
characterized as the roots that are not expressible as positive integer linear combinations of other positive roots.
The elementary roots with respect to $i$ have this property by Lemma \ref{minismin}, and there are $n-1=a+b+c $
of them by Lemma \ref{minrootcount}. The conclusion of (i) follows.

The assertion of (ii) follows from (i) and Theorem \ref{canbas}.
\end{proof}

\begin{cor}\label{cor:starop}
Let $i$ and $j$ be adjacent vertices of $W$, and for $k \in \{i, j\}$, define $R_k \subset \B$ be the 
set of basis elements of the form $\al_k \vee \be$. Then the map $\phi_{ij} : R_i \rightarrow R_j$ defined
by $\phi_{ij}(v) = s_i s_j (v)$ is a well-defined bijection with inverse $\phi_{ji}$.
\end{cor}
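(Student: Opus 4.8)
The plan is to reduce the corollary to the single claim that the linear automorphism $g := s_i s_j$ of $V$ carries the set of roots elementary with respect to $\al_i$ onto the set of roots elementary with respect to $\al_j$. To see that the claim suffices, note first that since $i$ and $j$ are adjacent we have $B(\al_i,\al_j)=-1$, so $g(\al_i)=s_i s_j(\al_i)=s_i(\al_i+\al_j)=\al_j$, and symmetrically $s_j s_i(\al_j)=\al_i$. Hence $\phi_{ij}(\al_i\vee\be)=\al_j\vee g(\be)$ for every $\be$, so, using Theorem \ref{cbminimal} to identify the elementary roots with respect to $\al_k$ with the simple roots of the stabilizer $W_{\al_k}$, the claim says exactly that $\phi_{ij}$ maps $R_i$ onto $R_j$. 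Since $\phi_{ij}$ and $\phi_{ji}$ are the restrictions of the mutually inverse linear maps $s_i s_j$ and $s_j s_i$ on $S^2(V)$, this at once makes $\phi_{ij}:R_i\to R_j$ a bijection with inverse $\phi_{ji}$.

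To prove the claim, I would use that conjugation by $g$ is an automorphism of $W$ satisfying $g\,w_\gamma\,g^{-1}=w_{g(\gamma)}$ for every root $\gamma$; since $g(\al_i)=\al_j$, this conjugation sends $W_{\al_i}=\Stab_W(\al_i)$ onto $W_{\al_j}$, so $g$ carries the root system of $W_{\al_i}$ bijectively onto that of $W_{\al_j}$. The decisive point is that $g$ also preserves positivity on the relevant roots: a direct check shows that the only positive roots of $W$ that $s_i s_j$ sends to negative roots are $\al_j$ and $\al_i+\al_j$, and neither of these is orthogonal to $\al_i$ (as $B(\al_i,\al_j)=-1$ and $B(\al_i,\al_i+\al_j)=1$). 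So $g$ sends every positive root of $W$ orthogonal to $\al_i$ to a positive root of $W$, and $g^{-1}$ does likewise with $i$ and $j$ interchanged. Now every root of $W_{\al_i}$ is orthogonal to $\al_i$ (its simple roots are, by Lemma \ref{minrootcount}, and $W_{\al_i}$ fixes $\al_i$ while preserving $B$), and the positive roots of $W_{\al_i}$ are exactly the roots of $W_{\al_i}$ that are positive in $W$ — indeed they are non-negative integer combinations of the elementary roots with respect to $\al_i$, which are themselves positive roots of $W$, while the negative roots of $W_{\al_i}$ are the negatives of these. Combining these observations, $g$ restricts to a bijection from the positive system of $W_{\al_i}$ onto that of $W_{\al_j}$. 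Being a linear bijection between these two positive systems, it carries indecomposable elements to indecomposable elements, that is, simple roots of $W_{\al_i}$ to simple roots of $W_{\al_j}$; by Theorem \ref{cbminimal} these are precisely the roots elementary with respect to $\al_i$ and to $\al_j$, which establishes the claim.

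The step I expect to need the most care is the control of positivity. The heart of the matter is the observation that the two positive roots inverted by $s_i s_j$ fail to be orthogonal to $\al_i$, which is exactly what allows the ambient positive system of $W$ to restrict compatibly to both reflection subgroups $W_{\al_i}$ and $W_{\al_j}$ and be matched up by $g$; from there one needs only the standard fact that the simple roots of a root system are its indecomposable positive roots, together with Theorem \ref{cbminimal}. The identity $s_i s_j(\al_i)=\al_j$ and the conjugation formula for reflections are routine.
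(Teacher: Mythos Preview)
Your proof is correct and follows essentially the same approach as the paper's: both arguments compute $s_is_j(\al_i)=\al_j$, observe that the two positive roots inverted by $s_is_j$ (namely $\al_j$ and $\al_i+\al_j$) fail to be orthogonal to $\al_i$, deduce that $s_is_j$ preserves positivity on roots orthogonal to $\al_i$, and then invoke Theorem~\ref{cbminimal} to conclude that simple roots of $W_{\al_i}$ are carried to simple roots of $W_{\al_j}$. Your write-up simply fills in a little more detail (the indecomposability characterization of simple roots and the compatibility of positivity between $W$ and $W_{\al_k}$) than the paper does.
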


\begin{proof} 
Note that for any real root $\be$ we have $$
\phi_{ij}(\al_i \vee \be) = (s_i s_j (\al_i)) \vee (s_i s_j(\be)) = \al_j \vee s_i s_j(\be)
.$$ In this way, $\phi_{ij}$ induces a bijection $\phi'_{ij}$ between the real roots $\be$ orthogonal to $\al_i$ and 
the real roots $\be' = s_i s_j(\be)$ orthogonal to $\al_j$. Because the reduced word $s_i s_j$ has a length of $2$, it
makes precisely two positive roots negative. These are 
$\al_j$ and $s_i(\al_j) = \al_i + \al_j$, neither of which is orthogonal to $\al_i$. It follows that
$\phi'_{ij}$ sends positive roots to positive roots, and negative roots to negative roots. In turn, this implies that
$\phi'_{ij}$ sends the simple roots of $W_{\al_i}$ to the simple roots of $W_{\al_j}$, which proves that $\phi_{ij}$
has the claimed property by Theorem \ref{cbminimal} (i). The claim about inverses is immediate from the fact that 
$s_i s_j$ is the inverse of $s_j s_i$.
\end{proof}

We record below a technical lemma for future use. 

\begin{lemma}\label{abgcartan}
Let $\al_i$ be a simple root, let $\al_i \vee \be \in \B(a,b,c)$ be a canonical basis element, and let 
$\gamma \in \simproots \backslash \{\al_i, \be\}$ be another simple root.
\begin{enumerate}
    \item  
At least one of the following holds: 
\begin{itemize}
    \item[\text{\rm (i)}]{$B(\al_i, \gamma) = B(\be, \gamma) = 0$;}
    \item[\text{\rm (ii)}]{$B(\al_i, \gamma)=-1$;}
    \item[\text{\rm (iii)}]{$B(\al_i, \gamma) = 0$ and $B(\be, \gamma)=-1$.}
\end{itemize}
\item We have $B(\be, \gamma) \in \{-1, 0, 1\}$.  
\end{enumerate}
\end{lemma}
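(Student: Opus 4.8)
My plan is to reduce both parts to two elementary facts about simply-laced root systems: (a) distinct simple roots of any simply-laced Weyl group have inner product in $\{0,-1\}$, which is just the sign condition on off-diagonal Cartan entries; and (b) two non-proportional positive roots of a \emph{finite} simply-laced root system have inner product in $\{-1,0,1\}$, since all such roots have $B(\cdot,\cdot)=2$ and any two non-proportional ones make an angle of $60^\circ$, $90^\circ$ or $120^\circ$. Write $\gamma = \al_p$. Since $\al_i$ and $\al_p$ are distinct simple roots of $\ypqr$ we always have $B(\al_i,\gamma)\in\{0,-1\}$, and I will split on these two values. Note also that $\be$ and $\al_p$ are non-proportional: both are positive roots, and they are distinct because either $\be$ is simple and $\gamma\ne\be$ by hypothesis, or $\be=\eta_{h,j}$ or $\theta_i$ and hence is not a simple root at all.

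\textbf{Case $B(\al_i,\gamma)=-1$.} Part (1) is immediate, as condition (ii) holds. For part (2), I would exhibit a finite-type standard parabolic containing $\be$ and $\gamma$; fact (b) then yields $B(\be,\gamma)\in\{-1,0,1\}$. The parabolic I would take is the one on $\Delta_0 := \operatorname{supp}(\be)\cup\{i,p\}$, and that it has finite type is a short case check: if $\be=\al_m$, then $\Delta_0$ has at most three vertices and spans a forest with components of type $A_1$, $A_2$, or $A_3$; if $\be=\eta_{h,j}$, then $h,j$ are neighbours of $i$ and $\Gamma$ is a tree, so the full subgraph on $\{h,i,j,p\}$ is a subgraph of the star $K_{1,3}$ centred at $i$ and hence of type $A_1^k$, $A_2$, $A_3$, or $D_4$; and if $\be=\theta_i$, then $\operatorname{supp}(\be)$ is the vertex set $\Delta$ of the type-$D_m$ parabolic of Definition \ref{minroot}(3), in which $i$ is a leaf --- either the unique vertex of $\Delta$ maximally far from the branch point, or (when $m=4$) one of its three symmetric outer vertices, by Remark \ref{uniquetyped} --- so adjoining the neighbour $p$ either leaves $\Delta$ unchanged or lengthens its longest arm by one, producing a diagram of type $D_{m+1}$.

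\textbf{Case $B(\al_i,\gamma)=0$.} Now $p\ne i$ and $A_{ip}=0$, so by Definition \ref{minroot}(1) the simple root $\al_p$ is \elementary with respect to $i$; hence by Theorem \ref{cbminimal}(i) both $\be$ and $\al_p$ are simple roots of the stabilizer $W_{\al_i}$, which is a simply-laced Weyl group by Theorem \ref{thm:hpqr}. Since $\be\ne\al_p$, fact (a) gives $B(\be,\gamma)\in\{0,-1\}$. This settles part (2), and gives condition (i) of part (1) if $B(\be,\gamma)=0$ and condition (iii) if $B(\be,\gamma)=-1$. Together with the previous case, this proves the lemma.

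The only delicate point is the $\theta_i$ sub-case of the first case: one needs the precise location of $i$ inside the $D_m$-diagram on $\operatorname{supp}(\theta_i)$ --- at the tip of its longest arm --- to be sure that attaching a pendant edge there yields another $D$-type diagram rather than an $E$-type, affine, or indefinite one. This is exactly where the hypothesis that $\be$ is \elementary with respect to $\al_i$ (and not merely some positive root orthogonal to $\al_i$) is genuinely used: if $p$ were attached to one of the other extreme vertices of $\Delta$, the enlarged parabolic could well be infinite --- but by construction such $p$ satisfy $B(\al_i,\gamma)=0$ and are handled instead by the stabilizer argument, which needs no finiteness.
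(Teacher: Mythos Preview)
Your argument is correct. For part (1) you and the paper proceed identically: split on $B(\al_i,\gamma)\in\{0,-1\}$, and in the orthogonal case invoke Theorem~\ref{cbminimal}(i) to place both $\be$ and $\gamma$ among the simple roots of the simply-laced stabilizer $W_{\al_i}$.

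For part (2) the routes diverge. The paper argues directly from the explicit shape of $\be$ (simple root, or highest root of an $A_3$ or $D_m$ parabolic), observing that every simple-root coefficient in $\be$ is at most $2$ and that any coefficient-$2$ simple root is interior to the $D_m$ support; it then splits on whether $\gamma$ lies in $\operatorname{supp}(\be)$, is adjacent to it, or neither, obtaining $B(\be,\gamma)\in\{1,-1,0\}$ respectively. Your approach is more structural: in the $B(\al_i,\gamma)=0$ case you inherit the sharper bound $B(\be,\gamma)\in\{0,-1\}$ for free from the stabilizer argument, and in the $B(\al_i,\gamma)=-1$ case you embed $\be$ and $\gamma$ in a finite-type parabolic on $\operatorname{supp}(\be)\cup\{i,p\}$ and appeal to the standard fact that non-proportional roots in a finite simply-laced system pair to $\{-1,0,1\}$. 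The price you pay is the small case check that this parabolic is indeed of finite type (your $D_{m+1}$ analysis in the $\theta_i$ sub-case is the only place this requires care, and your handling of it is correct). The paper's version avoids this check entirely but is tied more closely to the explicit coefficients of the \elementary roots; your version would adapt more readily to other families of \elementary-type roots, should they arise.
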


\begin{proof}
    (1) Since $\gamma \ne \al_i$, it follows from the definition of the generalized Cartan matrix that we must
have $B(\al_i, \gamma) \in \{0, -1\}$. If $B(\al_i, \gamma)=-1$ then (ii) holds and we are done, so assume that we have 
$B(\al_i, \gamma)=0$.

It now follows from Theorem \ref{cbminimal} that $\gamma$ and $\be$ are both simple roots of $W_{\al_i}$. Consideration of
the generalized Cartan matrix of $W_{\al_i}$ now shows that either $B(\be, \gamma)=0$ or $B(\be, \gamma)=-1$, which completes the proof.

(2) From the explicit description of $\B$, the root $\be$ is either a simple root, or is the highest root in a parabolic subsystem of $\ypqr$ of
type $A_3$ or $D_m$. Note that if $\al'$ is a simple root that occurs with coefficient $c \geq 2$ in $\be$, then it must be the case that
$\be$ is the highest root in a subsystem of type $D_n$ and $c = 2$. In this case, the only simple roots $\gamma$
in $\ypqr$ that are adjacent to $\al'$ must also be in the support of $\be$.

Suppose first that $\gamma$ is not in the support of $\be$. If $\gamma$ is not adjacent to a simple root in the support of $\be$, then we
have $B(\be, \gamma) = 0$, which satisfies the conclusion. If, on the other hand, $\gamma$ is adjacent to a simple root $\al'$ in the 
support of $\be$, then
the previous paragraph shows that $\gamma$ is adjacent to a simple root $\al'$ in the support of $\be$ that occurs with
coefficient $1$. There is a unique such simple root $\al'$, because the support of $\be$ is a tree and there are no circuits in the 
subgraph of $\ypqr$ consisting of $\gamma$ and the support of $\be$. It follows that $B(\be, \gamma) = -1$ in this case.

The final possibility is that $\gamma$ is in the support of $\be$. In this case, $\gamma$ and $\beta$ lie in a subsystem of type $A_3$ or $D_m$, 
and a case-by-case check (depending on whether the subsystem is of type $A_3$, $D_4$, or $D_m$ where $m > 4$) shows that 
$B(\be, \gamma)\in \{0,1\}$.
\end{proof}

\section{Orbits of \tworoots}\label{sec:orbits}

In Section \ref{sec:orbits}, we investigate the action of $W$ on pairs of orthogonal roots in more detail 
(Proposition \ref{orthorbits}), which leads to a detailed description of the $W$-orbits of \tworoots 
(Proposition \ref{prop:explicitorbits}). The following result is well known, and follows for example from \cite[Lemma 3.6]{brady02}.

\begin{lemma}\label{lem:simprootorbits}
Let $W$ be a simply laced Weyl group, and let $\al_i$ and $\al_j$ be two Coxeter generators of $W$. Then $\al_i$ and $\al_j$ are 
conjugate in $W$ if and only if they lie in the same connected component of the Dynkin diagram of $W$.
\qed\end{lemma}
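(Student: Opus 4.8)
The plan is to prove the two implications of the biconditional separately; both are elementary.

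\emph{Same component $\Rightarrow$ conjugate.} Suppose $\al_i$ and $\al_j$ lie in the same connected component of the Dynkin diagram, and choose a path $i=i_0,i_1,\ldots,i_r=j$ in that component, so that $i_{k-1}$ and $i_k$ are adjacent for each $k$. Since $W$ is simply laced we have $B(\al_{i_{k-1}},\al_{i_k})=-1$, and applying the reflection formula $s_t(\al_u)=\al_u-B(\al_t,\al_u)\al_t$ twice gives
\[
s_{i_{k-1}}s_{i_k}(\al_{i_{k-1}})=s_{i_{k-1}}\!\left(\al_{i_{k-1}}+\al_{i_k}\right)=\al_{i_k}.
\]
Thus consecutive simple roots along the path are $W$-conjugate, and composing the elements $s_{i_{k-1}}s_{i_k}$ produces $w\in W$ with $w(\al_i)=\al_j$.

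\emph{Conjugate $\Rightarrow$ same component.} Conversely, suppose the Dynkin diagram is disconnected, and write its vertex set as $S=S'\sqcup S''$ with no edges between $S'$ and $S''$; it suffices to show that no simple root indexed by $S'$ is conjugate to one indexed by $S''$, and then apply this with $S'$ the vertex set of an arbitrary connected component. Because $B(\al_s,\al_t)=0$ for all $s\in S'$ and $t\in S''$, the reflection formula shows that each generator indexed by $S'$ fixes every $\al_t$ with $t\in S''$, and symmetrically. Setting $V'=\Span\{\al_s:s\in S'\}$ and $V''=\Span\{\al_t:t\in S''\}$, we therefore have $V=V'\oplus V''$ with both summands $W$-submodules; in particular $w(\al_i)\in V'$ whenever $\al_i\in V'$, and likewise for $V''$. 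Since a nonzero vector of $V'$ is never equal to a nonzero vector of $V''$, a simple root indexed by $S'$ cannot be conjugate to one indexed by $S''$, as desired.

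There is no genuine obstacle here. The only place the simply-laced hypothesis enters is the displayed identity in the first paragraph, which would fail for a pair of adjacent simple roots of unequal length; the second paragraph is pure bookkeeping, needing only the direct-sum decomposition of $V$ induced by the direct-product decomposition of $W$ along connected components.
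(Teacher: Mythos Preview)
Your proof is correct. Both implications are handled cleanly: the forward direction uses the identity $s_i s_j(\al_i)=\al_j$ for adjacent simple roots in a simply laced diagram (which the paper itself invokes later, in the proof of Proposition~\ref{orthorbits}~(iv)), and the converse is the standard observation that $V$ splits as a $W$-module along connected components of the diagram.

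The paper does not actually prove this lemma; it is stated with a terminal \qed{} and attributed to \cite[Lemma~3.6]{brady02}. Your argument is the expected elementary proof and is entirely self-contained, so in that sense it goes further than the paper does here. There is nothing to correct.
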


The next result will be used in the proof of Proposition \ref{orthorbits} below.

\begin{lemma}\label{lem:branchnodes}
Let $\Gamma$ be a Dynkin diagram of type $\ypqr$ and let $\Gamma'$ be a parabolic subsystem of type $D_m$ for $m \geq 4$. 
Number the vertices of $\Gamma'$ such that $\be_0$ is the branch vertex, and such that the paths are 
$\be_0$--$\be'$, $\be_0$--$\be''$, and $\be_0\text{--}\be_1\text{--}\cdots\text{--}\be_{m-3}$. Let $\theta$ be the highest 
root of $\Gamma'$. Then the ordered pairs $(\be_{m-3}, \theta)$ and $(\be', \be'')$ are in the same $W(D_m)$-orbit.
\end{lemma}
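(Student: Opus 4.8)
The plan is to work entirely inside the type $D_m$ subsystem $\Gamma'$ and produce an explicit element of $W(D_m)$ carrying $(\be', \be'')$ to $(\be_{m-3}, \theta)$. Using the standard coordinate model for $D_m$, write the simple roots as $\varepsilon_1 - \varepsilon_2, \ldots, \varepsilon_{m-1} - \varepsilon_m, \varepsilon_{m-1} + \varepsilon_m$, so that the branch vertex $\be_0$ is $\varepsilon_{m-2} - \varepsilon_{m-1}$, the two "fork" roots are $\be' = \varepsilon_{m-1} - \varepsilon_m$ and $\be'' = \varepsilon_{m-1} + \varepsilon_m$, the tail $\be_0\text{--}\be_1\text{--}\cdots\text{--}\be_{m-3}$ corresponds to $\varepsilon_{m-2} - \varepsilon_{m-1}, \varepsilon_{m-3} - \varepsilon_{m-2}, \ldots, \varepsilon_1 - \varepsilon_2$ (so $\be_{m-3} = \varepsilon_1 - \varepsilon_2$), and the highest root is $\theta = \varepsilon_1 + \varepsilon_2$. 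First I would rewrite the target statement in these coordinates: I must show $(\varepsilon_{m-1} - \varepsilon_m, \varepsilon_{m-1} + \varepsilon_m)$ and $(\varepsilon_1 - \varepsilon_2, \varepsilon_1 + \varepsilon_2)$ lie in one $W(D_m)$-orbit on ordered pairs.

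The key observation is that $W(D_m)$ acts on $\{\pm\varepsilon_1, \ldots, \pm\varepsilon_m\}$ as the group of signed permutations with an even number of sign changes, and the pair $(\varepsilon_i - \varepsilon_j, \varepsilon_i + \varepsilon_j)$ is determined (up to the order within the pair) by the unordered pair of indices $\{i,j\}$ once we track which of $\varepsilon_i, \varepsilon_j$ carries the "aligned" sign. Concretely, there is a signed permutation $w$ with $w(\varepsilon_{m-1}) = \varepsilon_1$ and $w(\varepsilon_m) = \varepsilon_2$ — e.g. the permutation sending $(\varepsilon_{m-1}, \varepsilon_m, \varepsilon_1, \varepsilon_2) \mapsto (\varepsilon_1, \varepsilon_2, \varepsilon_{m-1}, \varepsilon_m)$ and fixing the rest — which uses no sign changes and hence lies in $W(D_m)$. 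This $w$ sends $\varepsilon_{m-1} - \varepsilon_m \mapsto \varepsilon_1 - \varepsilon_2$ and $\varepsilon_{m-1} + \varepsilon_m \mapsto \varepsilon_1 + \varepsilon_2$, i.e. it sends the ordered pair $(\be', \be'')$ to $(\be_{m-3}, \theta)$, which is exactly what is required.

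So the steps are: (1) set up the $D_m$ coordinate model and pin down which roots are $\be_0, \be', \be'', \be_1, \ldots, \be_{m-3}, \theta$; (2) recall (citing a standard reference or Bourbaki) that $W(D_m)$ consists of signed permutations of $\varepsilon_1, \ldots, \varepsilon_m$ with an even number of sign changes; (3) exhibit the explicit permutation $w$ above, noting it involves zero sign changes and is therefore in $W(D_m)$, and check $w(\be') = \be_{m-3}$, $w(\be'') = \theta$. I should also handle the degenerate small cases: for $m = 4$ all three of $\be', \be'', \be_1$ are the fork roots and the claim should be read accordingly, and for $m = 3$ (where $D_3 = A_3$ and $\theta = \varepsilon_1 + \varepsilon_3$ in a suitable labeling) a direct check suffices; I would remark that these boundary cases are immediate.

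I do not expect a serious obstacle here; the only mild subtlety is bookkeeping — making sure the chosen coordinates really match the stated labeling of $\Gamma'$ (in particular that $\be_{m-3}$, the far end of the tail, corresponds to $\varepsilon_1 - \varepsilon_2$ and that $\theta$ is $\varepsilon_1 + \varepsilon_2$ rather than $\varepsilon_1 - \varepsilon_2$ reversed), and confirming the permutation $w$ genuinely preserves the even-sign-change condition. The fact that the relevant $w$ can be taken to be an honest permutation (no sign flips at all) is what makes this clean: it automatically lies in $W(D_m) \leq W$, so no separate argument is needed to realize it inside the ambient group $W(\ypqr)$.
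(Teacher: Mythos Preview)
Your argument is correct. Both you and the paper exhibit an explicit Weyl group element carrying $(\be',\be'')$ to $(\be_{m-3},\theta)$, but by different routes. The paper stays in the abstract Coxeter presentation and writes down a specific product of simple reflections,
\[
(s_{m-4}s_{m-3})(s_{m-5}s_{m-4})\cdots(s_0 s_1)(s' s_0),
\]
whose effect is then verified by direct calculation. You instead pass to the standard coordinate realization of $D_m$ and observe that a pure permutation of the $\varepsilon_i$ (hence with zero sign changes, automatically in $W(D_m)$) does the job. Your approach is slightly more conceptual once the coordinate model is in hand and makes transparent \emph{why} such an element must exist; the paper's approach avoids importing the coordinate model and is self-contained in the Coxeter language already set up. Either is fine. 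Two small cleanups: the lemma assumes $m\geq 4$, so you need not discuss $m=3$; and your remark about the $m=4$ case is a bit muddled---there is nothing degenerate there, your coordinate argument applies verbatim (with $\be_{m-3}=\be_1=\varepsilon_1-\varepsilon_2$).
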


\begin{proof}
Let $s_i$, $s'$, $s''$ be the reflections associated to the roots $\be_i$, $\be'$ and $\be''$, respectively. Direct calculation shows
that $$
(s_{m-4}s_{m-3})(s_{m-5}s_{m-4})\cdots(s_0 s_1)(s' s_0)\big( (\be', \be'') \big) = (\be_{m-3}, \theta)
,$$ which completes the proof.
\end{proof}

\begin{prop}\label{orthorbits}
Let $W$ be a Weyl group of type $\ypqr$. 
\begin{itemize}
\item[\text{\rm (i)}]{The group $W$ acts transitively on $\realroots$.}
    \item[\text{\rm (ii)}]{Every ordered pair of orthogonal real roots of $W$ is $W$-conjugate to a pair of orthogonal simple 
    roots of $W$.} 
    \item[\text{\rm (iii)}]{Every real \tworoot is $W$-conjugate to an element of $\B$.}
    \item[\text{\rm (iv)}]{Every ordered pair $(\al, \be)$ of orthogonal real roots of $W$ is $W$-conjugate to its reversal,
    $(\be, \al)$.}
    \item[\text{\rm (v)}]{Two ordered pairs of orthogonal real roots $(\al_1, \be_1)$ and $(\al_2, \be_2)$ are $W$-conjugate
   if and only if the corresponding unordered pairs $\{\al_1, \be_1\}$ and $\{\al_2, \be_2\}$ 
    are $W$-conjugate.
    The number of $W$-orbits in each case is equal to the number connected components of $\hpqr$. This
    number is 3 if $W$ is of type $D_4=Y_{1,1,1}$, is 2 if $W$ is of type $D_n=Y_{1,1,n-3}$ for
$n>4$, and is 1 otherwise.} 
\end{itemize}
\end{prop}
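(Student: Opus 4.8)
The plan is to prove the five parts essentially in the stated order, with part (ii) carrying all the real content and the rest being formal consequences. For (i): every real root is by definition $W$-conjugate to a simple root, and all simple roots of $W$ lie in the single connected component $\ypqr$, hence are mutually conjugate by Lemma \ref{lem:simprootorbits}; transitivity on $\realroots$ follows at once.

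For (ii), given orthogonal real roots $(\al,\be)$, I would first apply (i) to replace the pair by a conjugate one whose first coordinate is a simple root $\al_i$. The second coordinate is then a real root of $W$ orthogonal to $\al_i$, and I claim such roots are exactly the real roots of the stabilizer $W_{\al_i}$: indeed $s_\be$ fixes $\al_i$ precisely when $B(\be,\al_i)=0$, so such reflections lie in $W_{\al_i}$, which by Theorem \ref{thm:hpqr} is a Weyl group whose reflections correspond to its real roots up to sign, forcing $\be=\pm\gamma$ for a real root $\gamma$ of $W_{\al_i}$; conversely every real root of $W_{\al_i}$ is a real root of $W$ orthogonal to $\al_i$ by Lemma \ref{minrootcount}. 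Since $\be$ is thus a real root of $W_{\al_i}$, it is $W_{\al_i}$-conjugate to a simple root of $W_{\al_i}$, i.e. to an \elementary root $\gamma$ with respect to $i$ by Theorem \ref{cbminimal}(i). It remains to conjugate $(\al_i,\gamma)$ into a pair of orthogonal simple roots of $W$. If $\gamma$ is of type $1$ this is immediate. If $\gamma$ is of type $2$, so $\gamma=\al_p+\al_i+\al_q$ for a path $p$--$i$--$q$, a direct computation inside the $A_3$-parabolic on $\{p,i,q\}$ (the degenerate analogue of Lemma \ref{lem:branchnodes}) shows $(\al_i,\gamma)\sim(\al_p,\al_q)$, and $\al_p\perp\al_q$ since $p,q$ are non-adjacent in the tree. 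If $\gamma=\theta_i$ is of type $3$, living in a $D_m$-parabolic $\Gamma'$ with $m\ge 4$, I would apply Lemma \ref{lem:branchnodes} to $\Gamma'$: because $i\in L(\theta_i)$ I may take $i$ to be the endpoint $\be_{m-3}$ of the long branch (automatically if $m>4$, and after relabelling the three branches if $m=4$), and the lemma yields $(\al_i,\theta_i)\sim(\be',\be'')$, a pair of simple roots of $W$ that are orthogonal because they are distinct neighbours of the branch vertex of $\Gamma'$.

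Parts (iii) and (iv) now reduce to (ii). For (iii): a real \tworoot is $\al\vee\be$ with $\al\perp\be$ real roots, so by (ii) it is conjugate to $\al_j\vee\al_k$ with $\al_j\perp\al_k$ simple, and $\al_j\vee\al_k\in\B$ because $\al_k$ is \elementary of type $1$ with respect to $j$ (Definition \ref{minroot}). For (iv): using (ii) I reduce to showing $(\al_j,\al_k)\sim(\al_k,\al_j)$ for orthogonal simple roots, and conjugating back then gives $(\al,\be)\sim(\be,\al)$. For the simple-root case, take the path $j=v_0,v_1,\dots,v_\ell=k$ in $\ypqr$, which has $\ell\ge 2$ since $\al_j\perp\al_k$; in the resulting $A_{\ell+1}$-parabolic, realised as $S_{\ell+2}$ acting on $e_1,\dots,e_{\ell+2}$ with $\al_{v_t}=e_{t+1}-e_{t+2}$, the permutation $(1\ \ell{+}1)(2\ \ell{+}2)$ (well defined as $\ell\ge 2$) exchanges $\al_{v_0}$ and $\al_{v_\ell}$.

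For (v), the forward implication is trivial, and the reverse follows from (iv): if $w\{\al_1,\be_1\}=\{\al_2,\be_2\}$ then $w$ either matches the ordered pairs directly or sends $(\al_1,\be_1)$ to $(\be_2,\al_2)$, which (iv) identifies with $(\al_2,\be_2)$; so the ordered and unordered orbit sets are in bijection. To count them, fix a simple root $\al_1$; by (i) every orbit of ordered pairs has a representative with first coordinate $\al_1$, and any $w\in W$ with $w(\al_1)=\al_1$ lies in $W_{\al_1}$ by definition, so these orbits correspond bijectively to the $W_{\al_1}$-orbits of real roots orthogonal to $\al_1$, i.e. of real roots of $W_{\al_1}\cong W(\hpqr)$ by the identification from (ii). Since $W(\hpqr)$ is a product of the Weyl groups of the connected components of $\hpqr$, these orbits are indexed by the components of $\hpqr$ (using Lemma \ref{lem:simprootorbits} within each component and the fact that roots in distinct components cannot be conjugate), whose number is given by Lemma \ref{lem:componentcount}. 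The main obstacle is part (ii), and inside it the type-$2$ and type-$3$ cases, where one must actually produce Weyl group elements conjugating $(\al_i,\gamma)$ to a pair of simple roots; Lemma \ref{lem:branchnodes} is designed precisely for the type-$3$ case, and the only genuinely new calculation needed is the small $A_3$ computation for type $2$.
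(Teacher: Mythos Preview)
Your proof is correct and follows essentially the same strategy as the paper. Two small differences are worth noting. For (ii), the paper chooses the first coordinate to be a simple root $\al_1$ that is \emph{maximally far from the branch point}; since the elementary roots with respect to such an endpoint are only of type~1 or type~3 (see the proof of Lemma~\ref{minrootcount}), this choice eliminates the type-2 case entirely, so the extra $A_3$ computation you carry out is not needed. For (iv), the paper instead first uses the identity $s_is_j(\al_i)=\al_j$ to slide one simple root along the path until the two roots are at distance~$2$, and then applies the single element $s_js_is_ks_j$ to swap them; your direct symmetric-group computation along the full path is an equally valid alternative.
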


\begin{proof}
Any real root is $W$-conjugate to a simple root by definition, and the simple roots are in the same $W$-orbit by Lemma 
\ref{lem:simprootorbits} because $\ypqr$ is simply laced and connected. It follows that $W$ acts transitively on the set
$\realroots$ of real roots, proving (i).

Let $\al_1$ be a simple root that maximally far from the branch point of $\Gamma$, and let $W_{\al_1}$ be its stabilizer
in $W$. By the previous paragraph, any ordered pair of orthogonal real roots, $(\al, \be)$, is $W$-conjugate to one of the form 
$(\al_1, \gamma)$. By Theorem \ref{thm:hpqr}, $\gamma$ is a real root for the a simply laced Weyl group $W_{\al_1}$. It follows that
there exists $w \in W_{\al_1}$ such that $w(\gamma)$ is a simple root in the root system of $W_{\al_1}$. By Theorem 
\ref{cbminimal}, we have $w\big( (\al_1, \gamma) \big) = (\al_1, \be')$, where $\al_1 \vee \be' \in \B$ is a canonical basis element.

The explicit description of $\B$ in Definition \ref{def:canbas} shows that either (a) $\be'$ is a simple root of $W$, or 
(b) $\be' = \theta_1$. In the second case, Lemma \ref{lem:branchnodes} implies that $(\al_1, \be')$ is $W$-conjugate to an ordered pair of
simple roots of $W$. This completes the proof of (ii).

Part (iii) follows from (ii), because if $\al_i$ and $\al_j$ are orthogonal simple roots, then $\al_i \vee \al_j$ is an element of $\B$.

To prove (iv), it suffices by (ii) to consider the case where $\al$ and $\be$ are both simple roots. By repeatedly using the identity
$s_is_j(\al_i) = \al_j$ when $i$ and $j$ are adjacent vertices of $\Gamma$, we may assume that there is a subgraph $i$--$j$--$k$ of
$\Gamma$ in which $\al = \al_i$ and $\be = \al_k$. Direct calculation now shows that $$
s_j s_i s_k s_j\big( (\al, \be) \big) = (\be, \al)
,$$ from which (iv) follows.

The first assertion of (v) follows from (iv), so it is enough to prove the second assertion for ordered pairs of roots.
We claim that there is a bijection between the set of $W_{\al_1}$-orbits of real roots of $W_{\al_1}$ and the set of $W$-orbits
of ordered orthogonal pairs of real roots of $W$, given by $$
\phi([\gamma]) = [(\al_1, \gamma)]
,$$ where $[\gamma]$ is the $W_{\al_1}$-orbit of $\gamma$, and $[(\al_1, \gamma)]$ is the $W$-orbit of the pair $(\al_1, \gamma)$.
The map $\phi$ is well-defined and injective because $W_{\al_1}$ is the stabilizer of $\al_1$, and $\phi$ is surjective because $W$
acts transitively on $\simproots$.

It follows from Lemma \ref{lem:simprootorbits}, applied to the simply laced Weyl group $W_{\al_1}$, that the orbits of real roots of 
$W_{\al_1}$ are in bijection with the connected components of $\hpqr$. The number of these connected
components is as claimed by Lemma \ref{lem:componentcount}. 
\end{proof}

The following basic result from linear algebra turns out to be very helpful.

\begin{lemma}\label{lem:components}
Let $V$ be a finite dimensional vector space, and let $\al_1$ and $\al_2$ be two linearly independent vectors
in $V$. If there exist $\be_1, \be_2 \in V$ such that $\al_1 \vee \al_2 = \be_1 \vee \be_2$, then the vectors
$\be_i$ agree with the vectors $\al_i$ up to changing the order and multiplication by nonzero scalars.
\end{lemma}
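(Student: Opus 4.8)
The plan is to work directly with the definition of $\vee$ as an element of $V \otimes V$ and exploit the linear independence of $\al_1, \al_2$ by extending to a basis. First I would fix vectors $\be_1, \be_2$ with $\al_1 \vee \al_2 = \be_1 \vee \be_2$, i.e.
\[
\al_1 \otimes \al_2 + \al_2 \otimes \al_1 = \be_1 \otimes \be_2 + \be_2 \otimes \be_1
\]
in $V \otimes V$. Extend $\{\al_1, \al_2\}$ to a basis $\{\al_1, \al_2, e_3, \ldots, e_d\}$ of $V$, and write $\be_1 = \sum_i x_i f_i$, $\be_2 = \sum_i y_i f_i$ in this basis, where I abbreviate $f_1 = \al_1$, $f_2 = \al_2$, $f_k = e_k$ for $k \geq 3$. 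Then both sides expand in the induced basis $\{f_i \otimes f_j\}$ of $V \otimes V$, and comparing coefficients gives a system of equations: $x_i y_j + x_j y_i = 0$ whenever $(i,j) \notin \{(1,2),(2,1)\}$, and $x_1 y_2 + x_2 y_1 = 1$.

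Next I would extract the structural consequences of this system. Taking $i = j \geq 3$ gives $x_i y_i = 0$; taking $i = j = 1$ gives $x_1 y_1 = 0$ and similarly $x_2 y_2 = 0$. The key step is to show that $\be_1$ and $\be_2$ each have support confined to $\{f_1, f_2\}$. Suppose $x_k \neq 0$ for some $k \geq 3$; then $x_k y_k = 0$ forces $y_k = 0$, and then for every $j \neq k$ the equation $x_k y_j + x_j y_k = x_k y_j = 0$ forces $y_j = 0$, so $\be_2 = 0$, contradicting $x_1 y_2 + x_2 y_1 = 1$. Hence $x_k = 0$ for all $k \geq 3$, and symmetrically $y_k = 0$ for all $k \geq 3$. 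So $\be_1, \be_2 \in \Span\{\al_1, \al_2\}$ and the identity reduces to a computation in the $4$-dimensional space $S^2(\Span\{\al_1,\al_2\}) \subseteq V \otimes V$.

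Finally I would finish inside the plane $\Span\{\al_1, \al_2\}$. With $\be_1 = x_1 \al_1 + x_2 \al_2$ and $\be_2 = y_1 \al_1 + y_2 \al_2$, expanding $\be_1 \vee \be_2$ and equating coefficients of $\al_1 \otimes \al_1$, $\al_2 \otimes \al_2$, and $\al_1 \otimes \al_2$ gives $x_1 y_1 = 0$, $x_2 y_2 = 0$, and $x_1 y_2 + x_2 y_1 = 1$. From $x_1 y_1 = 0$, either $x_1 = 0$ or $y_1 = 0$. If $x_1 = 0$, then $x_2 y_1 = 1$ forces $x_2, y_1 \neq 0$, and $x_2 y_2 = 0$ then forces $y_2 = 0$; thus $\be_1 = x_2 \al_2$ and $\be_2 = y_1 \al_1$ with $x_2 y_1 = 1$, which is the asserted conclusion (the pairs agree after swapping order and rescaling). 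The case $y_1 = 0$ is symmetric and yields $\be_1 = x_1 \al_1$, $\be_2 = y_2 \al_2$ with $x_1 y_2 = 1$.

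I do not expect a serious obstacle here; this is elementary multilinear algebra. The only mild subtlety is bookkeeping with the basis expansion and making sure the argument that the extra-coordinate supports vanish is airtight, but the $x_i y_i = 0$ / $x_k y_j = 0$ cascade handles it cleanly. One could alternatively phrase the proof more invariantly: $\al_1 \vee \al_2$ is a symmetric tensor of rank $2$, and the pair $\{\al_1, \al_2\}$ is recoverable (up to order and scaling) as the set of lines in the rank-$2$ "column space" of the associated symmetric bilinear form; but the coordinate argument above is shorter to write out.
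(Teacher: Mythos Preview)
Your proof is correct and follows essentially the same approach as the paper: extend $\{\al_1,\al_2\}$ to a basis of $V$ and compare coefficients in the induced basis of the tensor square. The paper phrases the argument slightly more conceptually (observing that the diagonal coefficients force $\be_1$ and $\be_2$ to have disjoint supports, then reading off the conclusion directly), whereas you carry out the coordinate computation explicitly and reduce to the two-dimensional case; but the underlying idea is the same.
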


\begin{proof}
We extend $\{\al_1, \al_2\}$ to a basis ${\mathcal A} = \{\al_1, \ldots, \al_n\}$ of $V$. Let 
$\{\al_i \vee \al_j : 1 \leq i \leq j \leq n\}$ be the associated standard basis of $S^2(V)$, 
and consider the expansion of $\be_1 \vee \be_2 = \al_1 \vee \al_2$ in 
terms of this standard basis. Because the coefficient of $\al_k \vee \al_k$ in $\al_1 \vee \al_2$ is
zero for all $k$, it follows that the supports of $\be_1$ and $\be_2$ with respect to ${\mathcal A}$ 
are disjoint. 

In turn, it follows that if $\al_k \vee \al_l$ is in the support of $\be_1 \vee \be_2$, then either
$\al_k$ is in the support of $\be_1$ and $\al_l$ is in the support of $\be_2$, or vice versa,
but not both. By considering the coefficient of $\al_k \vee \al_l$ in $\al_1 \vee \al_2$, this
can only be possible if either $k=1$ and $l=2$, or $l=1$ and $k=2$. This implies that either $\be_1$ is
a nonzero scalar multiple of $\al_1$ and $\be_2$ is a nonzero scalar multiple of $\al_2$, or vice
versa, which completes the proof.
\end{proof}

\begin{defn}\label{def:components}
If $v$ is an element of $S^2(V)$ of the form $\al \vee \be$, then we call $\al$ and $\be$ the 
{\it components} of $v$.
By Lemma \ref{lem:components}, the components of $v \in S^2(V)$ are well defined up to order and
multiplication by nonzero scalars.
We will therefore say ``$\al$ is a component of $v$" to mean the same as ``some scalar 
multiple of $\al$ is a component of $v$".
We call a \tworoot\ {\it real} (respectively, {\it positive}) if its components can be taken to be real
(respectively, positive). 
\end{defn}

\begin{prop}\label{prop:fibres}
Let $f$ be the function from the set of unordered pairs of orthogonal real roots of $\ypqr$ to
$\realtworoots$ defined by $$
f(\{\al, \be\}) = \al \vee \be
.$$ 
\begin{itemize}
    \item[\text{\rm (i)}]{The fibre of each \tworoot consists of the two pairs $\{\al, \be\}$ and $\{-\al, -\be\}$,
and these two pairs are conjugate to each other under the action of the Weyl group.}    
    \item[\text{\rm (ii)}]{The function $f$ induces a bijection between $W$-orbits of unordered
    pairs of orthogonal real roots, and $W$-orbits of real \tworoots.}
\end{itemize}

\end{prop}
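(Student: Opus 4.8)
\textbf{Proof proposal for Proposition \ref{prop:fibres}.}

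The plan is to analyze the two parts separately, using Lemma \ref{lem:components} as the essential structural input for (i) and Proposition \ref{orthorbits} for the orbit count in (ii). For part (i), suppose $\{\al', \be'\}$ is an unordered pair of orthogonal real roots with $\al' \vee \be' = \al \vee \be$. Since $\al$ and $\be$ are orthogonal roots, they are linearly independent (each has $B(\al,\al) = B(\be,\be) = 2 \ne 0 = B(\al,\be)$), so Lemma \ref{lem:components} applies: the multiset $\{\al', \be'\}$ agrees with $\{\al, \be\}$ up to order and nonzero scalars, say $\al' = \lambda \al$ and $\be' = \mu \be$ after reordering. The key point is that a nonzero scalar multiple of a root that is itself a root must have scalar $\pm 1$: this is immediate from $B(\al', \al') = 2 = \lambda^2 B(\al,\al) = 2\lambda^2$, forcing $\lambda \in \{1, -1\}$, and similarly $\mu \in \{1,-1\}$. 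Finally, the constraint $\al' \vee \be' = \al \vee \be$ as elements of $S^2(V)$ forces $\lambda \mu = 1$ (comparing, say, the coefficient of $\al \otimes \be + \be \otimes \al$ in the standard expansion, or simply noting $\lambda\mu\,(\al \vee \be) = \al \vee \be$ and $\al \vee \be \ne 0$), so either $\lambda = \mu = 1$, giving $\{\al', \be'\} = \{\al, \be\}$, or $\lambda = \mu = -1$, giving $\{\al', \be'\} = \{-\al, -\be\}$. Hence the fibre is exactly $\{\,\{\al,\be\},\ \{-\al,-\be\}\,\}$ (these coincide only in the degenerate situation where $\al = -\be$, which cannot happen for an orthogonal pair). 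That these two pairs are $W$-conjugate follows from Proposition \ref{orthorbits}(iv): applying it to the ordered pair $(\al,\be)$ gives $w \in W$ with $w(\al,\be) = (\be,\al)$; but we want $(-\al,-\be)$, so instead I would argue directly that the pair $(\al, \be)$ and $(-\al, -\be)$ are conjugate — e.g. by Proposition \ref{orthorbits}(ii) reduce to orthogonal simple roots $\al_i, \al_j$, and then exhibit an explicit element (for instance $w_{\al_i} w_{\al_j} = s_i s_j$, which negates both $\al_i$ and $\al_j$ since they are orthogonal) sending $(\al_i, \al_j)$ to $(-\al_i, -\al_j)$; transporting back by the conjugating element handles the general pair.

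For part (ii), the map $f$ is clearly surjective onto $\realtworoots$ by the very definition of a real \tworoot\ (Definition \ref{def:components}): every real \tworoot\ has the form $\al \vee \be$ for an orthogonal pair of real roots. The map $f$ is $W$-equivariant (since $w(\al) \vee w(\be) = w(\al \vee \be)$), so it descends to a surjection on orbit sets. For injectivity on orbits, suppose $f(\{\al_1,\be_1\})$ and $f(\{\al_2,\be_2\})$ lie in the same $W$-orbit of \tworoots, i.e. $w(\al_1 \vee \be_1) = \al_2 \vee \be_2$ for some $w \in W$. Then $\{w\al_1, w\be_1\}$ is an unordered pair of orthogonal real roots lying in the fibre of $\al_2 \vee \be_2$, so by part (i) it equals either $\{\al_2, \be_2\}$ or $\{-\al_2, -\be_2\}$. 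In the first case $\{\al_1, \be_1\}$ and $\{\al_2, \be_2\}$ are $W$-conjugate via $w$; in the second case they are $W$-conjugate via $w$ followed by the element from part (i) that conjugates $\{-\al_2,-\be_2\}$ to $\{\al_2,\be_2\}$. Either way the two pairs are in the same $W$-orbit, establishing injectivity on orbit sets and hence that $f$ induces a bijection.

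I do not expect a serious obstacle here; the proposition is essentially a bookkeeping consequence of Lemma \ref{lem:components} and Proposition \ref{orthorbits}. The one point requiring a little care is the scalar analysis in part (i) — making sure the ``up to nonzero scalar'' ambiguity of Lemma \ref{lem:components} is pinned down to a genuine $\pm 1$ and that the signs on the two components are correlated — but this is handled cleanly by the bilinear-form normalization $B(\al,\al)=2$ together with comparing coefficients in $S^2(V)$. The conjugacy of $\{\al,\be\}$ and $\{-\al,-\be\}$ in part (i) is the other item to state carefully, and the cleanest route is the explicit reduction to orthogonal simple roots and the observation that the product of the two corresponding commuting reflections negates both.
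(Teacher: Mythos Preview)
Your proposal is correct and follows essentially the same approach as the paper: Lemma~\ref{lem:components} to identify the fibre up to scalars, then pinning down the scalars to $\pm 1$ with correlated signs, and finally exhibiting an explicit Weyl group element for the conjugacy in (i), with (ii) following by $W$-equivariance. Two small remarks: the paper obtains $\lambda,\mu\in\{\pm 1\}$ by citing the standard fact that the only root multiples of a real root are $\pm\al$, whereas your bilinear-form computation is an equally valid (and self-contained) alternative; and for the conjugacy you need not reduce to simple roots first---the element $s_\al s_\be$ already negates both $\al$ and $\be$ for \emph{any} pair of orthogonal real roots, which is exactly what the paper uses.
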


\begin{proof}
The statement about fibres follows from Lemma \ref{lem:components} and the fact
(\cite[Proposition 5.1 (b)]{kac90}) that the only
scalar multiples of a real root $\al$ are $\pm \al$. The two pairs listed are conjugate to
each other by the Weyl group element $s_{\al} s_{\be}$, which completes the proof of (i).

For part (ii), Proposition \ref{orthorbits} (v) gives the equivalence between ordered and
unordered pairs of real roots. Part (i) then implies that the function $f$ gives a well-defined 
correspondence between ordered pairs of orthogonal real roots and real \tworoots.
\end{proof}

\begin{rmk}\label{rmk:thatremark}
Proposition \ref{prop:fibres} allows us to identify the action of $W$ on pairs of orthogonal real roots with the 
action of $W$ on real \tworoots. We will use this implicitly from now on, for example in 
Proposition \ref{prop:explicitorbits} below.
\end{rmk}

\begin{notation}\label{not:aandd}
In order to give a precise description of the $W$-orbits of \tworoots, we
recall the standard constructions of root systems of types $A$ and $D$ as described in 
\cite[\S2]{humphreys90}. We endow $\R^n$ with the usual positive definite inner product and with an orthonormal basis 
$\{\ep_1, \ep_2, \ldots, \ep_n\}$.

In type $A_{n-1}$, the positive roots are $\{\ep_i - \ep_j : 1 \leq i < j \leq n\}$, the simple roots are 
$\{\ep_i - \ep_{i+1} : 1 \leq i < n\}$, and the highest root is $\ep_1 - \ep_n$. The Weyl group is isomorphic to the 
symmetric group $S_n$ and it acts on the basis elements $\ep_i$ by permutations. For $1 \leq i < n$, the simple
reflection $s_i$ corresponding to $\al_i := \ep_i - \ep_{i+1}$ acts as the transposition $(i, i+1)$.

In type $D_n$, the positive roots are $\{\ep_i \pm \ep_j : 1 \leq i < j \leq n\}$, the simple roots are $$
\{\al_i := \ep_i - \ep_{i+1} : 1 \leq i < n\} \cup \{\al_n := \ep_{n-1} + \ep_n\}
,$$ and the highest root is $\ep_1 + \ep_2$. The numbering scheme for the simple roots is shown in
Figure \ref{fig:D}. The Weyl group acts on the 
elements $\pm \ep_i$ by signed permutations. For $1 \leq i < n$, the simple
reflection $s_i$ corresponding to $\ep_i - \ep_{i+1}$ acts as the transposition $(i, i+1)$. The simple reflection
$s_n$ corresponding to $\ep_{n-1} + \ep_n$ acts as the signed permutation switching $\ep_{n-1}$ and $-\ep_n$,
and fixing $\ep_j$ for $j \not\in \{n-1, n\}$.
\end{notation}


\begin{figure}[ht!]
    \centering
\begin{tikzpicture}

            \node[main node] (1) {};
            \node[main node] (2) [right=1cm of 1] {};
            \node[main node] (3) [right=1.5cm of 2] {};
            \node[main node] (4) [right=1cm of 3] {};
            \node[main node] (5) [above right=0.7cm and 0.9cm of 4] {};
            \node[main node] (6) [below right=0.7cm and 0.9cm of 4] {};

            \node (11) [below=0.1cm of 1] {\small{$1$}};
            \node (22) [below=0.1cm of 2] {\small{$2$}};
            \node (33) [below=0.1cm of 3] {\small{$n-3$}};
            \node (44) [right=0.1cm of 4] {\small{$n-2$}};
            \node (55) [right=0.1cm of 5] {\small{$n-1$}};
            \node (66) [right=0.1cm of 6] {\small{$n$}};

            \path[draw]
            (1)--(2)
            (3)--(4)--(5)
            (4)--(6);

            \path[draw,dashed]
            (2)--(3);
\end{tikzpicture}
\caption{The Dynkin diagram of type $D_{n}\, (n\ge 4)$}
    \label{fig:D}
\end{figure}
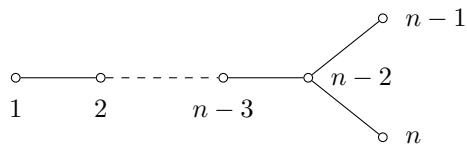

\begin{prop}\label{prop:explicitorbits}
Let $W$ be a simply laced Weyl group of finite type. 
\begin{itemize}
    \item[\text{\rm (i)}]{If $W$ is of type $A_n$ then there is a single orbit of positive \tworoots. The elements
    of $\B$ in this orbit are $$
\{ \al_i \vee \al_j : 1 \leq i < j-1 \leq n-1 \}
\quad \cup \quad
\{ \al_i \vee \eta_{i-1, i+1} : 1 < i < n \}
    .$$}
\item[\text{\rm (ii)}]{If $W$ is of type $D_4$ \textup{(}where $\theta_1=\theta_3=\theta_4$ is the
        highest root\textup{)}, then there are three orbits of positive \tworoots.
    The orbits intersect $\B$ in the sets $$
    \{ \al_1 \vee \al_3, \ \al_2 \vee \eta_{1,3}, \ \al_4 \vee \theta_4\}, \quad
    \{ \al_1 \vee \al_4, \ \al_2 \vee \eta_{1,4}, \ \al_3 \vee \theta_3\}, \quad \text{and} \quad 
    \{ \al_3 \vee \al_4, \ \al_2 \vee \eta_{3,4}, \ \al_1 \vee \theta_1\}
    .$$}
    \item[\text{\rm (iii)}]{If $W$ is of type $D_n$ for $n \geq 5$ then there are two orbits of positive \tworoots, $\trorbit_1$ and
    $\trorbit_2$, where $$
\trorbit_1 = 
\big\{
    (\ep_i - \ep_j)\vee(\ep_i + \ep_j) : 1 \leq i < j \leq n
\big\}
    $$ and $\trorbit_2 = \posroots^2 \backslash \trorbit_1$ where $\posroots^2$ is the set of positive
    2-roots. The elements of $\B$ in the orbit $\trorbit_1$ are the $n-1$
    elements $$
\al_1 \vee \theta_1, \ \al_2 \vee \theta_2, \ \ldots,\ \al_{n-3} \vee \theta_{n-3},\ \al_{n-2} \vee \eta_{n-1, n},\ 
\al_{n-1} \vee \al_n
    .$$}
    \item[\text{\rm (iv)}]{If $W$ is of type $E_6$, $E_7$, or $E_8$, then there is a single orbit of positive \tworoots.}
\end{itemize}
\end{prop}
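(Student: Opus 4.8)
The plan is to handle the four cases using the machinery already developed, chiefly Proposition \ref{orthorbits} (v), which reduces counting $W$-orbits of real \tworoots{} to counting connected components of the graph $\hpqr$, together with the component count in Lemma \ref{lem:componentcount}. For parts (i) and (iv), the claim is simply that there is one orbit; for finite type $A_n$ the stabilizer of a simple root (via the Brink--Allcock description, suitably adapted, or a direct check in the symmetric group) is connected, and for $E_6$, $E_7$, $E_8$ one computes $\hexg{a-2}{b-2}{c-2}$ from the branch lengths $(1,2,2)$, $(1,2,3)$, $(1,2,4)$ and observes it is connected, so Proposition \ref{orthorbits} (v) gives a single orbit. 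For part (ii), $D_4 = \ygc{1}{1}{1}$ gives $\hexg{-1}{-1}{-1}$, three isolated vertices, hence three orbits; one then needs to verify that the three listed subsets of $\B$ are exactly the intersections of these orbits with $\B$. For part (iii), $D_n = \ygc{1}{1}{n-3}$ gives $\hexg{-1}{-1}{n-3}$, which by Lemma \ref{lem:componentcount} has two components, hence two orbits, and the content is the explicit identification of $\trorbit_1$ and of $\B \cap \trorbit_1$.

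For the explicit identifications in (ii) and (iii), I would work in the $\ep_i$-coordinates of Notation \ref{not:aandd}. In type $D_n$, the positive roots come in two shapes $\ep_i - \ep_j$ and $\ep_i + \ep_j$; the signed-permutation action of $W$ shows that the unordered pair $\{\ep_i - \ep_j, \ep_i + \ep_j\}$ (which is orthogonal, since $B(\ep_i - \ep_j, \ep_i + \ep_j) = |\ep_i|^2 - |\ep_j|^2 = 0$) can be carried to $\{\ep_1 - \ep_2, \ep_1 + \ep_2\}$, and this is a complete set of representatives for one orbit; a pair of orthogonal positive roots \emph{not} of this form (e.g. two roots with disjoint index supports) lies in the other orbit. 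This gives $\trorbit_1$ as stated. To pin down $\B \cap \trorbit_1$, I would use the explicit description of $\B$ from Definition \ref{def:canbas} and Definition \ref{minroot}: for $i \le n-3$ the \elementary{} root $\theta_i$ is the highest root of the type $D_{n-i+1}$ (or $D_4$) parabolic rooted at the branch, and a short coordinate computation shows $\al_i \vee \theta_i$ has components of the form $\ep_p - \ep_q$ and $\ep_p + \ep_q$; the boundary cases $\al_{n-2} \vee \eta_{n-1,n}$ and $\al_{n-1} \vee \al_n$ are checked directly (note $\eta_{n-1,n}$ in the type-$D$ labelling is $\al_{n-2} + \al_{n-1} + \al_n$). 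Conversely every other element of $\B$ is of the form $\al_i \vee \al_j$ with disjoint supports or $\al_i \vee \eta_{i-1,i+1}$ in the interior, and one checks these are \emph{not} in $\trorbit_1$, so they fall in $\trorbit_2$. The $D_4$ case (ii) is the same computation with $n=4$, where the three $\theta_i$ ($i \in \{1,3,4\}$ away from the branch $2$) each generate one orbit; the listed triples are verified by applying the generators $s_i$ and tracking the resulting \tworoots{} through $\B$, using Corollary \ref{cor:starop} to move between the $R_k$'s.

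For part (i), type $A_n$: the reflection representation used here is the one on $\Span\{\al_i\} = \Span\{\ep_i - \ep_{i+1}\}$, and the orthogonal complement of a simple root $\ep_i - \ep_{i+1}$ inside this space contains a root system of type $A_{i-2} \cup A_{n-i-1}$ (the roots with support disjoint from $\{i, i+1\}$) together with the single extra root $\ep_{i-1} - \ep_{i+2} = \eta_{i-1,i+1}$ joining the two pieces; its Dynkin diagram is therefore a path, hence connected, so all orthogonal pairs are conjugate --- giving one orbit. The listed elements of $\B$ are then read off from Definition \ref{minroot}: the type-1 contributions $\al_i \vee \al_j$ with $j > i+1$ (non-adjacent simple roots), and the type-2 contributions $\al_i \vee \eta_{i-1,i+1}$ for interior $i$; there are no type-3 \elementary{} roots since $A_n$ has no branch point. (One caveat worth flagging: since $A_n$ is not of the form $\ypqr$, I would either invoke the restriction remark in the introduction or give the short self-contained argument just sketched, rather than literally citing Theorem \ref{thm:hpqr}.)

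The routine parts are the coordinate computations identifying the components of $\al_i \vee \theta_i$ and the verification that the $D_4$ triples are closed under the orbit. The main obstacle is making the identification of $\B \cap \trorbit_1$ in part (iii) airtight: one must be careful with the three boundary elements near vertices $n-2, n-1, n$ of the $D_n$ diagram (where the ``generic'' description of $\theta_i$ degenerates), and one must confirm that no element of $\B$ lands in $\trorbit_1$ other than the $n-1$ listed ones --- equivalently, that each $\al_i \vee \al_j$ ($|i-j|>1$) and each interior $\al_i \vee \eta_{i-1,i+1}$ has a component of ``difference type'' $\ep_p - \ep_q$ and a component that is \emph{not} the matching ``sum type'' $\ep_p + \ep_q$, so that the pair is not $W$-conjugate to $\{\ep_1 - \ep_2, \ep_1 + \ep_2\}$. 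This is a finite check but deserves care.
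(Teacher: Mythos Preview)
Your proposal is correct and follows essentially the same route as the paper: both use Lemma \ref{lem:componentcount} and Proposition \ref{orthorbits} (v) to count orbits, and both use the $\ep_i$-coordinates of Notation \ref{not:aandd} to identify $\trorbit_1$ and $\B \cap \trorbit_1$ in type $D_n$. Two minor differences: in type $A_n$ the paper gives the direct one-line argument (orthogonal roots have disjoint index supports, and $S_n$ is transitive on such pairs) rather than analysing the stabilizer diagram, though you mention this alternative; and in type $D_n$ the paper makes explicit the invariant separating the two orbits (identical versus disjoint index support, preserved by signed permutations), which you use implicitly but should state to confirm that $\trorbit_1$ is actually $W$-closed and not merely contained in a single orbit.
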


\begin{proof}
In type $A$, two positive roots $\ep_i - \ep_j$ and $\ep_k - \ep_l$ are orthogonal if and only if their supports, $\{i, j\}$ 
and $\{k, l\}$, are disjoint. There is therefore a single $A_n$-orbit of orthogonal roots, which proves the first assertion for type $A_n$.
The second assertion follows by restricting Definition \ref{def:canbas} to a parabolic subgroup of type $A$.

Proposition \ref{orthorbits} (v) implies that there are three orbits in part (ii). The other claims of (ii) follow from computations
similar to those in Lemma \ref{lem:branchnodes}.

In type $D_n$ for $n \geq 5$, Proposition \ref{orthorbits} (v) shows that there are two orbits of positive \tworoots.
In this case, two positive roots are orthogonal if and only if their supports are either identical or disjoint.
Because the Weyl group acts by signed permutations, these two types of orthogonal roots must form separate orbits. This proves the 
statement describing $\trorbit_1$ and $\trorbit_2$.

To prove the last assertion of (iii), we need to identify all the root pairs in $\trorbit_1$ that contain a simple root.
Recall that $\theta_1$ is the highest root in type $D_n$ and that $\theta_1 = \ep_1 + \ep_2$.
It follows that $\{\al_1, \theta_1\} = \{\ep_1 - \ep_2, \ep_1 + \ep_2\}$, which is an element of $\trorbit_1$. Similarly, we have
$\theta_k = \ep_k + \ep_{k+1}$ for all $1 \leq k \leq n-3$. Direct calculation shows that $\eta_{n-1, n} = \ep_{n-2} + \ep_{n-1}$,
and the proof follows.

Part (iv) holds by Proposition \ref{orthorbits} (v). 
\end{proof}

\begin{defn}\label{def:largesmall}
If $W$ has type $D_n$ for $n \geq 5$, we will refer to the orbits $\trorbit_1$ and $\trorbit_2$ of 
Proposition \ref{prop:explicitorbits} (iii) as the {\it small orbit} and the {\it large orbit} of $W$, respectively.
\end{defn}

\begin{rmk}\label{rmk:smallorbit}
When $W$ has type $D_n$ for $n \geq 5$, the small orbit of \tworoots behaves like the root system of type $A_{n-1}$.
More precisely, a \tworoot of the form $(\varepsilon_i - \varepsilon_j) \vee (\varepsilon_i + \varepsilon_j)$, which can be simplified to 
$(\varepsilon_i \vee \varepsilon_i) - (\varepsilon_j \vee \varepsilon_j)$, can be identified with the root $\varepsilon_i - \varepsilon_j$ of type $A_{n-1}$. With this identification, 
the action of $W(D_n)$ by signed permutations is equivalent to the action of $W(A_{n-1})$ by unsigned permutations. 
This means that the action factors through the surjective homomorphism of groups from $W(D_n)$ to $W(A_{n-1})$ that sends 
the generators $s_{n-1}$ and $s_n$ of $W(D_n)$ to the same generator, $s_{n-1}$, of $W(A_{n-1})$.

In type $D_4$, the argument of the previous paragraph applies verbatim to the orbit $$
\{ \al_3 \vee \al_4, \ \al_2 \vee \eta_{3,4}, \ \al_1 \vee \theta_1\} =
\{ (\ep_3 - \ep_4) \vee (\ep_3 + \ep_4), \ (\ep_2 - \ep_3) \vee (\ep_2 + \ep_3), \ (\ep_1 - \ep_2) \vee (\ep_1 + \ep_2) \}
,$$ and it applies to the other two orbits of \tworoots by applying graph automorphisms. The action of $W$ on each of the
three orbits of \tworoots factors through a surjective homomorphism from $W(D_4)$ to $W(A_3) \cong S_4$ that identifies 
two of the three branch nodes.
\end{rmk}

\section{Reflections acting on \tworoots}\label{sec:reflact}

The goal of Section \ref{sec:reflact} is to prove Theorem \ref{thm:canbasrefl}, which gives a formula 
for the action of a simple reflection on a canonical basis element. 

\begin{defn}
For each real root $\al$, we define the element $C_\al$ of the group algebra $FW$ to be $s_\al - 1$.
\end{defn}

\begin{lemma}\label{lem:caction}
Let $\al$ be a real root of type $\ypqr$, let $s_\al$ be the associated reflection, and let 
$V$ be the reflection representation.
\begin{itemize}
    \item[{\rm (i)}]{If $v \in V$, then we have $C_\al(v) = -B(\al, v)\al$, and $C_\al$ acts on
    $V \otimes V$ as $$
    C_\al \otimes C_\al + C_\al \otimes 1 + 1 \otimes C_\al
    .$$}
    \item[{\rm (ii)}]{If $v \in S^2(V)$, then $C_\al(v)$ is of the form $\al \vee v'$ for
    some $v' \in V$.}
    \item[{\rm (iii)}]{If $\be$ is a real root orthogonal to $\al$, then we have $C_\al C_\be = 
    C_\be C_\al$. The product $C_\al C_\be$ acts as zero on $V$, and acts on $V \otimes V$ as $$
C_\al \otimes C_\be + C_\be \otimes C_\al
    .$$}
\end{itemize}
\end{lemma}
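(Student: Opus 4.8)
The plan is to prove each of the three parts of Lemma~\ref{lem:caction} by direct computation, using the definition $C_\al = s_\al - 1$ together with the reflection formula $s_\al(v) = v - B(\al, v)\al$ and the comultiplication-style behaviour of group algebra elements acting on a tensor product.

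\textbf{Part (i).} First I would observe that for $v \in V$ we have $C_\al(v) = s_\al(v) - v = \bigl(v - B(\al, v)\al\bigr) - v = -B(\al, v)\al$, which is the first claim. For the action on $V \otimes V$, recall that $W$ acts diagonally, so $s_\al$ acts as $s_\al \otimes s_\al$. Writing $s_\al = 1 + C_\al$ in each tensor factor gives
\[
s_\al \otimes s_\al = (1 + C_\al) \otimes (1 + C_\al) = 1 \otimes 1 + C_\al \otimes 1 + 1 \otimes C_\al + C_\al \otimes C_\al,
\]
and subtracting $1 \otimes 1$ (which represents the identity, i.e.\ the ``$-1$'' in $C_\al$) yields the stated expression for how $C_\al$ acts on $V \otimes V$.

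\textbf{Part (ii).} Here I would apply part~(i): for $v \in S^2(V) \subseteq V \otimes V$, the operator $C_\al$ acts as $C_\al \otimes C_\al + C_\al \otimes 1 + 1 \otimes C_\al$. Since $C_\al$ maps every element of $V$ into the line $F\al$ by part~(i), each of the three summands sends a simple tensor $x \otimes y$ into $F\al \otimes V + V \otimes F\al$, hence into the span of $\{\al \otimes z, z \otimes \al : z \in V\}$. Restricting to the symmetric square, the image lands in $\{\al \vee z : z \in V\}$, so $C_\al(v) = \al \vee v'$ for some $v' \in V$, as claimed. (One can even read off $v'$ explicitly from the formula, but that is not needed here.)

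\textbf{Part (iii).} Since $\al \perp \be$, the reflections $s_\al$ and $s_\be$ commute (they generate a group of type $A_1 \times A_1$), so $C_\al C_\be = (s_\al - 1)(s_\be - 1) = (s_\be - 1)(s_\al - 1) = C_\be C_\al$. On $V$, we compute $C_\al C_\be(v) = C_\al\bigl(-B(\be, v)\be\bigr) = B(\be,v) B(\al, \be)\,\al = 0$ because $B(\al,\be) = 0$; hence $C_\al C_\be$ acts as zero on $V$. For the action on $V \otimes V$, multiply the two expressions from part~(i):
\[
\bigl(C_\al \otimes C_\al + C_\al \otimes 1 + 1 \otimes C_\al\bigr)\bigl(C_\be \otimes C_\be + C_\be \otimes 1 + 1 \otimes C_\be\bigr).
\]
Expanding gives nine terms; every term in which $C_\al$ and $C_\be$ appear in the same tensor slot contains a factor $C_\al C_\be$ acting on $V$, which vanishes by the previous sentence. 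The surviving terms are exactly $C_\al \otimes C_\be + C_\be \otimes C_\al$ (coming from $(C_\al \otimes 1)(1 \otimes C_\be)$ and $(1 \otimes C_\al)(C_\be \otimes 1)$); all terms involving $C_\al \otimes C_\al$ or $C_\be \otimes C_\be$ pair a double factor in one slot against something in that slot, again producing a $C_\al C_\be$ or a $C_\be C_\al$ on $V$ and hence zero. This leaves precisely the stated formula.

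The computations are all routine; the only point requiring a little care is the bookkeeping in part~(iii), making sure that every one of the nine expanded terms either survives into $C_\al \otimes C_\be + C_\be \otimes C_\al$ or is killed by the vanishing of $C_\al C_\be$ on $V$. I expect that to be the main (though still minor) obstacle, and it is best handled by organizing the nine terms according to which tensor slot carries both $C_\al$ and $C_\be$.
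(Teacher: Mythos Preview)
Your proof is correct and follows essentially the same approach as the paper's own proof. The only minor difference is in part~(iii): the paper argues that $C_\al C_\be(v)$ lies in $F\al$ (as the image of $C_\al$) and, by commutativity, also in $F\be$, hence is zero since $\al$ and $\be$ are linearly independent; you instead compute $C_\al C_\be(v)$ directly and use $B(\al,\be)=0$, which is equally straightforward.
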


\begin{proof}
The formula for $C_\al(v)$ follows from the formula for the action of $s_\al$ on $V$.
Since $s_\al$ acts diagonally as $s_\al \otimes s_\al$ on $V \otimes V$, it follows that $C_\al = s_\al - 1$ acts
on $V \otimes V$ as $$
s_\al \otimes s_\al - 1 \otimes 1 = C_\al \otimes C_\al + C_\al \otimes 1 + 1 \otimes C_\al
,$$ which completes the proof of (i).

By part (i), it follows that if $v_1 \in V \otimes V$, then we have $$
C_\al(v_1) = \lambda_1 \al \otimes \al + \al \otimes v_2 + v_3 \otimes \al
$$ for some $v_2, v_3 \in V$. In particular, if $v_1 \in S^2(V)$ then we have $v_2 = v_3$ and $$
C_\al(v_1) = \al \vee (\lambda_1 \al + v_2),$$ which proves part (ii).

To prove (iii), note that $s_\al$ and $s_\be$ commute with each other because $\al$ and $\be$ are orthogonal.
It follows that $C_\al = s_\al - 1$ and $C_\be = s_\be - 1$ also commute with each other. If we take $v \in S^2(V)$,
part (i) implies that $C_\al C_\be (v)$ is a scalar multiple of $\al$, and that $C_\be C_\al(v)$ is a scalar
multiple of $\be$. It follows that $C_\al C_\be(v) = 0$. The formula for the action on $V \otimes V$ follows 
from this by composing the actions of $C_\al$ and $C_\be$ on $V \otimes V$ as given in (i).
\end{proof}

\begin{lemma}\label{lem:ctu}
Let $\al$ and $\be$ be real roots of type $\ypqr$ such that $B(\al, \be) = \pm 1$, and define
$t = s_\al$ and $u = s_\be$. Then we have $tut=utu$, and the element of $FW$ given by $$
C = 1 - t - u + tu + ut - tut
$$ acts as zero on the submodule $M \leq S^2(V)$.
\end{lemma}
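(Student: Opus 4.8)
The plan is to work in the group algebra $FW$ and recognize the expression
$C = 1 - t - u + tu + ut - tut$ as a product of $C_\al$ and $C_\be$-type elements. First I would observe that since $B(\al,\be) = \pm 1$, the reflections $t = s_\al$ and $u = s_\be$ satisfy the braid relation $tut = utu$ (this is the standard rank-two computation, and follows because $\al$ and $\be$ span a root subsystem of type $A_2$). Given this, the six elements $1, t, u, tu, ut, tut = utu$ are exactly the six elements of the parabolic subgroup $\langle t, u\rangle \cong S_3$, so $C$ is a genuine element of $FW$ and makes sense.

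The key algebraic step is to factor $C$. I would check the identity
\[
C = (1 - t)(1 - u)(1 - t) = C_\al \, C_\be \, C_\al
\]
by direct expansion: $(1-t)(1-u) = 1 - t - u + tu$, and multiplying on the right by $(1-t)$ gives $1 - t - u + tu - t + t^2 + ut - tut = 1 - 2t - u + tu + ut - tut + 1$; using $t^2 = 1$ this is $2 - 2t - u + tu + ut - tut$, which is not quite $C$. So instead I would try the symmetric-looking factorization $C = (1-t)(1-u)(1-t)$ more carefully, or alternatively recognize that over $M$ we have extra relations coming from Lemma \ref{lem:caction}(iii): since $C_\al C_\be$ acts as zero on $S^2(V)$ whenever $\al \perp \be$, I expect the relevant vanishing to come from expressing $C$, after reduction using the braid relation, as a sum of terms each of which contains a factor $C_\gamma C_{\gamma'}$ with $\gamma \perp \gamma'$. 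Concretely, inside the $A_2$ subsystem generated by $\al$ and $\be$ there is a third positive root $\gamma = \al + \be$ (up to sign), and the three reflections $s_\al, s_\be, s_\gamma$ are pairwise related, but crucially one can write $1 - t - u + tu + ut - tut$ as $C_\al C_\be + C_\be C_\al - (\text{something involving } C_\gamma)$, and then invoke that $C_\al C_\be$ and $C_\be C_\al$ — while not individually zero here since $\al \not\perp \be$ — combine with the third term to annihilate $M$.

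The cleanest route, and the one I would actually pursue, is to use Lemma \ref{lem:caction}(i): on $V \otimes V$ each $C_\gamma$ acts as $C_\gamma \otimes C_\gamma + C_\gamma \otimes 1 + 1 \otimes C_\gamma$, and on $V$ itself $C_\gamma$ acts as $v \mapsto -B(\gamma,v)\gamma$. So I would compute the action of $C$ on $S^2(V) \subseteq V \otimes V$ by first computing the action of $C$ on $V$ (a rank-two reflection computation: $C$ should act as zero on $V$, since $1 - t - u + tu + ut - tut$ is, up to sign, the ``alternating sum'' over $S_3$ whose action on the standard $2$-dimensional reflection representation of $S_3$ vanishes — this is the statement that the sign representation does not appear, or a direct matrix check in the basis $\{\al,\be\}$), and then use the tensor formula to deduce the action on $V \otimes V$. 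Writing $c = C_\al$, $d = C_\be$ acting on $V$, the action of $C = 1-t-u+tu+ut-tut$ on $V\otimes V$ unwinds into a polynomial in $c \otimes 1$, $1 \otimes c$, $d \otimes 1$, $1 \otimes d$ and their products; collecting terms and using the already-established fact that $C$ acts as zero on $V$ (so the ``diagonal'' and ``single-tensor-leg'' contributions cancel), what survives is a combination of pure tensor-product terms $c \otimes d$, $d \otimes c$, etc., which one checks vanishes on the symmetric square $M$ (note $M$ is all of $S^2(V) \cap \ker B$, but the cancellation should already happen on $S^2(V)$, or at worst the trivial summand $S^2(V)/M$ requires the form $B$, which is where $\ker B = M$ enters).

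The main obstacle I anticipate is bookkeeping: the expression for $C$ acting on $V \otimes V$ expands into a large number of terms (each of the six group elements contributes a $3$-term tensor expansion, and products of $c, d$ proliferate), so the challenge is organizing the computation — likely by first establishing the two lemmas ``$C$ acts as $0$ on $V$'' and ``$cdc = 0$ on $V$ when restricted appropriately'', or better, by finding a slick factorization of $C$ as a product of two commuting $C_\gamma$'s for \emph{orthogonal} $\gamma$'s after a change of generators. If such a factorization exists — for instance if $C = C_{\al}C_{\gamma}$ or a signed variant where $\gamma$ is the positive root orthogonal-within-rank-two obstruction — then Lemma \ref{lem:caction}(iii) finishes it immediately. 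I would spend effort searching for that factorization first, and fall back on the brute-force tensor expansion (guided by the vanishing on $V$) only if no clean factorization presents itself.
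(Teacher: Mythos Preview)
Your hoped-for factorization $C = C_\gamma C_{\gamma'}$ with $\gamma \perp \gamma'$ cannot exist: the subsystem generated by $\al$ and $\be$ has type $A_2$ and contains no pair of orthogonal roots, so Lemma~\ref{lem:caction}(iii) is simply unavailable here.

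On the other hand, your observation that $C$ is the alternating sum over $\langle t,u\rangle \cong S_3$ is exactly right and can be pushed much further than you do, eliminating the tensor bookkeeping entirely. The element $C$ is six times the sign idempotent of $S_3$, so it annihilates any $S_3$-module in which the sign representation does not occur. You note this for $V$; the same holds for $S^2(V)$. Indeed, as an $S_3$-module $V \cong (\text{standard}) \oplus (\text{trivial})^{n-2}$: the fixed space of $t$ is the hyperplane $\al^\perp$ of dimension $n-1$, which rules out any sign summand. Then $S^2(V)$ is assembled from $S^2(\text{standard})\cong \text{trivial}\oplus\text{standard}$ together with further copies of the standard and trivial representations, so again no sign summand appears. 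This already kills $C$ on all of $S^2(V)$, not just on $M$, with no brute-force expansion needed.

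The paper's proof is quite different and avoids $S_3$-representation theory. It exhibits two \emph{left} factorizations
\[
C \;=\; C_\al\,(-1+u-ut) \;=\; C_\be\,(-1+t-tu),
\]
the second using $tut=utu$. By Lemma~\ref{lem:caction}(ii), the first factorization shows that $C(v)$ has $\al$ as a component for every $v\in S^2(V)$, and the second shows that $\be$ is also a component. Lemma~\ref{lem:components} then forces $C(v)$ to be a scalar multiple of $\al\vee\be$. Since $B(\al,\be)\ne 0$ we have $\al\vee\be\notin M$, whereas $C(v)\in M$ for $v\in M$ because $M$ is a submodule; hence $C(v)=0$. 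This route trades the character computation for the components lemma, and explains why the paper states vanishing only on $M$ rather than on all of $S^2(V)$.
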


\begin{proof}
Since $\al$ and $\be$ are real roots, we have $B(\al, \al) = 2 = B(\be, \be)$, so the condition $B(\al, \be) = \pm 1$
implies that $\al$ and $\be$ are linearly independent by linear algebra.

When computing products of $t$ and $u$, we note that since $s_\gamma=s_{-\gamma}$ for any real root
$\gamma$ of $W$, by replacing $\beta$ with $-\beta$ if necessary we may assume that $B(\alpha,\beta)=-1$. It then
follows from the formula for a reflection that $tut$ and $utu$ both negate the vector $\al + \be$
and fix its $B$-orthogonal complement, therefore $tut=utu = s_{\al +\be}$. 

Note that we have $$
C = (t-1)(-1+u-ut) = C_\al(-1+u-ut)
.$$ It follows from Lemma \ref{lem:caction} (i) that if $v \in S^2(V)$ then $C(v) = \al \vee v'$ for some $v' \in V$, 
so that $\al$ is a component of $C(v)$.

Similarly, we have $$
C = (u-1)(-1+t-tu) = C_\be(-1+t-tu) 
,$$ which implies that $\be$ is a component of $C(v)$.

Lemma \ref{lem:components} now implies that $C(v)$ is a scalar multiple of $\al \vee \be$. However, 
the hypothesis that $B(\al, \be) \ne 0$ implies that $\al \vee \be$ does not lie in $M$. We conclude that $C(v)=0$.
\end{proof}

\begin{rmk}\label{rmk:annihilation}
    In the setting of Lemma \ref{lem:ctu}, the element $C$ in fact annihilates all of the symmetric
    square $S^2(V)$ rather than just the module $M$. To see this, it suffices to show that
    $C\cdot (\alpha\vee\beta)=0$ because $M$ has codimension 1 in $S^2(V)$ and
    $\alpha\vee\beta\in S^2(V)\setminus M$. The fact that $C\cdot (\alpha\vee\beta)=0$ can be checked by a
    straightforward computation: assuming that $B(\alpha,\beta)=-1$ without loss of generality as in
    the proof of Lemma \ref{lem:ctu}, we have 
    \begin{eqnarray*}
        C\cdot (\alpha\vee\beta)&= & \alpha\vee \beta-  t\cdot(\alpha\vee\beta)-u\cdot(\alpha\vee
        \beta)+ tu\cdot(\alpha\vee \beta)+ut\cdot(\alpha\vee \beta)- tut\cdot(\alpha\vee \beta)\\
 &= & \alpha\vee\beta + \alpha \vee (\alpha+\beta) + (\alpha+\beta)\vee
 \beta-\beta\vee(\alpha+\beta)-(\alpha+\beta)\vee \alpha-\beta\vee\alpha\\
 &= &0.
    \end{eqnarray*}
    On the other hand, we note that $C$ does not annihilate all of the tensor square
    $V\otimes V$: a similar computation to the one shown above proves that 
if $\alpha,\beta$ and another root $\gamma$ are the simple roots of type $A_3$ subsystem of
$Y_{a,b,c}$ with $B(\alpha,\gamma)=0$ and $B(\beta,\gamma)=-1$, then $C\cdot (\beta\otimes
\gamma)=\alpha\otimes \beta-\beta\otimes \alpha\neq 0$. 
\end{rmk}

The next result describes a situation where applying a reflection to a \tworoot is analogous to applying a reflection 
to a root. In each case, one obtains a sum of two \tworoots: the original one, and a different \tworoot in the same $W$-orbit.

\begin{prop}\label{prop:generalrefl}
Let $\al \vee \be$ be an arbitrary real \tworoot of type $\ypqr$, and let $\gamma$ be a simple
root. 
\begin{itemize}
    \item[\text{\rm (i)}]{We have $s_\gamma(\al \vee \be) = (\al \vee \be) + (\gamma \vee v)$, where $v$ is given by $$
v = B(\al, \gamma) B(\be, \gamma) \gamma - B(\al, \gamma) \be - B(\be, \gamma) \al
.$$}
\item[\text{\rm (ii)}]{Let $\al \vee \be$ be a real \tworoot of type $\ypqr$, and let $\gamma$ be a real root for which $B(\al, \gamma) = \pm 1$.
Then we have $$
s_\gamma(\al \vee \be) = (\al \vee \be) + s_\al s_\gamma(\al \vee \be) = (\al \vee \be) \mp (\gamma \vee s_\al s_\gamma(\be))
.$$}
    \item[\text{\rm (iii)}]{The following are equivalent:
    \begin{itemize}
        \item[\text{\rm (1)}]{the element $v \in V$ from (i) is a real root;}
        \item[\text{\rm (2)}]{either $B(\al, \gamma) = \pm 1$, or $B(\be, \gamma) = \pm 1$, or both.}
    \end{itemize}}
\end{itemize}
\end{prop}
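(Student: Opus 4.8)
The plan is to prove (i) by a direct computation in $V \otimes V$, to deduce the first equality of (ii) from the annihilation relation of Lemma \ref{lem:ctu}, and to obtain (iii) by computing the self-pairing $B(v,v)$ and combining it with (ii). For (i), I would write $s_\gamma = 1 + C_\gamma$ and use Lemma \ref{lem:caction}(i), which describes $C_\gamma$ on $V\otimes V$ as $C_\gamma \otimes C_\gamma + C_\gamma \otimes 1 + 1 \otimes C_\gamma$, together with $C_\gamma(\delta) = -B(\gamma,\delta)\gamma$ for $\delta \in V$. Applying this to $\al \otimes \be + \be \otimes \al$ and collecting the resulting multiples of $\gamma \otimes \gamma$, of $\gamma \otimes \be + \be \otimes \gamma$, and of $\gamma \otimes \al + \al \otimes \gamma$ yields $C_\gamma(\al \vee \be) = \gamma \vee v$ with $v$ exactly as stated; this is the only genuine bookkeeping step for (i).

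For (ii), the hypotheses give $B(\al,\be)=0$ (since $\al \vee \be$ is a \tworoot) and $B(\al,\gamma)=\pm 1$, so Lemma \ref{lem:ctu}, applied with $t=s_\al$ and $u=s_\gamma$, tells us that $1 - s_\al - s_\gamma + s_\al s_\gamma + s_\gamma s_\al - s_\al s_\gamma s_\al$ annihilates $\al \vee \be \in M$. Since $s_\al(\al)=-\al$ and $s_\al(\be)=\be$ (using $B(\be,\al)=0$), we have $s_\al(\al\vee\be) = -(\al\vee\be)$, hence also $s_\gamma s_\al(\al\vee\be) = -s_\gamma(\al\vee\be)$ and $s_\al s_\gamma s_\al(\al\vee\be) = -s_\al s_\gamma(\al\vee\be)$; substituting these into the annihilation relation collapses it to $s_\gamma(\al\vee\be) = (\al\vee\be) + s_\al s_\gamma(\al\vee\be)$. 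For the final equality I compute $s_\al s_\gamma(\al) = s_\al(\al - B(\al,\gamma)\gamma) = -B(\al,\gamma)\gamma$, where $B(\al,\gamma)^2 = 1$ is used, so that $s_\al s_\gamma(\al\vee\be) = -B(\al,\gamma)\bigl(\gamma \vee s_\al s_\gamma(\be)\bigr) = \mp\bigl(\gamma \vee s_\al s_\gamma(\be)\bigr)$.

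For (iii), expanding $B(v,v)$ from the formula in (i) while using $B(\al,\be)=0$ and $B(\al,\al)=B(\be,\be)=B(\gamma,\gamma)=2$ gives, after simplification, $B(v,v) = 2 - 2\bigl(B(\al,\gamma)^2-1\bigr)\bigl(B(\be,\gamma)^2-1\bigr)$. Thus $B(v,v)=2$ if and only if $B(\al,\gamma)^2=1$ or $B(\be,\gamma)^2=1$, which is condition (2); since every real root satisfies $B(v,v)=2$, this already gives (1)$\Rightarrow$(2). For (2)$\Rightarrow$(1), I use $\al\vee\be = \be\vee\al$ to assume $B(\al,\gamma)=\pm 1$, then compare (i) with the last equality of (ii) to get $\gamma \vee v = \mp\bigl(\gamma \vee s_\al s_\gamma(\be)\bigr)$; a short check that $\gamma$ and $v$ are linearly independent (otherwise $\be$ lies in the span of $\al$ and $\gamma$ in a way incompatible with $B(\al,\be)=0$ and $B(\be,\be)=2$) lets Lemma \ref{lem:components} identify $v = \mp s_\al s_\gamma(\be)$, which is $\pm$ a real root and hence a real root. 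Alternatively one can bypass Lemma \ref{lem:components} here by computing $s_\al s_\gamma(\be) = \be - B(\be,\gamma)\gamma + B(\al,\gamma)B(\be,\gamma)\al$ directly and matching it term by term with the formula for $v$ after substituting $B(\al,\gamma)=\pm 1$.

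I expect the main obstacle to be purely computational rather than conceptual: getting the signs right in the expansion of (i), obtaining the clean factored form $B(v,v) = 2 - 2(B(\al,\gamma)^2-1)(B(\be,\gamma)^2-1)$, and carefully excluding the degenerate possibilities $v=0$ and $v$ proportional to $\gamma$ in the converse direction of (iii), where Lemma \ref{lem:components} needs genuine linear independence.
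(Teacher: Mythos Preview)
Your proposal is correct and follows essentially the same approach as the paper. The only cosmetic differences are that for (i) the paper expands $(s_\gamma(\al))\vee(s_\gamma(\be))$ directly rather than via the $C_\gamma$-formalism, and for (ii) the paper factors the element $C$ of Lemma~\ref{lem:ctu} as $(-1+u-tu)(t-1)$ and uses $(t-1)(\al\vee\be)=-2(\al\vee\be)$ rather than substituting term by term; for (iii) the paper's argument is identical to your alternative direct computation of $s_\al s_\gamma(\be)$.
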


\begin{proof}
To prove (i), we use the formula for a reflection acting on $V$: \begin{align*}
 s_\gamma(\al \vee \be) &= (s_\gamma(\al) \vee s_\gamma(\be))\\
&= (\al - B(\gamma, \al) \gamma) \vee (\be - B(\gamma, \be) \gamma)\\
&= (\al \vee \be) - (B(\gamma, \al) \gamma \vee \be) - (\al \vee B(\gamma, \be) \gamma) + (B(\gamma, \al) \gamma \vee B(\gamma, \be) \gamma) \\
&= (\al \vee \be) + \big(\gamma \vee (-B(\gamma, \al) \be - B(\gamma, \be) \al + B(\gamma, \al)B(\gamma, \be) \gamma )\big),
\end{align*} and the stated formula follows.

To prove (ii), let $t$ and $u$ be the reflections associated to $\al$ and $\gamma$, respectively. Since $\al$ and $\be$ are orthogonal, it 
follows that $t$ fixes $\be$, so that we have $$
(t-1)(\al \vee \be) = (t(\al) \vee t(\be)) - (\al \vee \be) = -2(\al \vee \be)
.$$

Let $C$ be the element defined from $t, u$ in Lemma \ref{lem:ctu}, and note that we have $$
C = (-1+u-tu)(t-1)
.$$ By Lemma \ref{lem:ctu}, we have $C(\al \vee \be)=0$. Since $(t-1)(\al \vee \be)$ is a nonzero multiple of
$\al \vee \be$, it follows that we have $$
(-1+u-tu)(\al \vee \be)=0
,$$ which implies the first equation in the statement of (ii). The second equation follows because $s_\al s_\gamma(\al)$ is equal to
$\gamma$ if $B(\al, \gamma)=-1$ and to $-\gamma$ if $B(\al, \gamma) = +1$. This completes the proof of (ii).

To show (1) implies (2) in part (iii), assume that the element $v$ is a real root. It follows that we have $B(v, v) = 2$. For brevity, let
us define $x = B(\alpha, \gamma)$ and $y = B(\beta, \gamma)$, so that $v = xy\gamma - x\be - y\al$. We then have \begin{align*}
B(v, v) &= B(xy\gamma - x\be - y\al, xy\gamma - x\be - y\al)\\
&= x^2y^2 B(\gamma, \gamma) + x^2 B(\be, \be) + y^2 B(\al, \al) - 2x^2yB(\gamma, \be) -2xy^2B(\gamma, \al) + 2xyB(\be, \al) \\
&= 2x^2 y^2 + 2y^2 + 2x^2 - 2x^2y^2 - 2x^2y^2 + 0 \\
&= 2x^2 + 2y^2 - 2x^2 y^2.
\end{align*} Since we also know that $B(v, v)=2$, we have $2x^2 + 2y^2 - 2x^2y^2=2$. This is equivalent to the condition $$
(x^2 - 1)(y^2 - 1) = 0
,$$ so that either $x = \pm 1$ or $y = \pm 1$, as required.

Now assume that (2) holds. If $B(\al, \gamma) = \pm 1$, it follows from (ii) that 
$v = \mp s_\al s_\gamma(\be)$, 
which is a real root. The case $B(\be, \gamma) = \pm 1$ follows by a symmetrical argument, proving (1).
\end{proof}

\begin{rmk}\label{rmk:deltar}
If $\gamma$ is a simple root, then the root $s_\al s_\gamma(\be)$ in the statement of Proposition \ref{prop:generalrefl} (ii) 
agrees up to sign with the positive root $\delta_r$ appearing in \cite[Lemma 2.2]{cohen06}.
\end{rmk}

The next result gives a short formula for the action of a simple reflection on a canonical basis element in terms of
the element $v$ of Proposition \ref{prop:generalrefl} (i).

\begin{theorem}\label{thm:canbasrefl}
Let $\B$ be the canonical basis of \tworoots of type $\ypqr$, let $\al \vee \be \in \B$, and let $\gamma$ be a simple
root of $W$. Then we have $$
s_\gamma(\al \vee \be) = \begin{cases}
\al \vee \be & \text{\ if\ } B(\al, \gamma) = B(\be, \gamma) = 0;\\
-\al \vee \be & \text{\ if\ } \gamma \in \{\al, \be\};\\
(\al \vee \be) + (\gamma \vee v) & \text{\ otherwise,\ for\ some\ } (\gamma \vee v) \in \B.\\
\end{cases}
$$ Furthermore, the basis element $\gamma \vee v$ appearing above satisfies $w(\al \vee \be) = \gamma \vee v$ for
some $w \in \langle s_\al, s_\be, s_\gamma \rangle$.
\end{theorem}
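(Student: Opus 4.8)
The plan is to reduce everything to the three-root subgroup $\langle s_\al, s_\be, s_\gamma\rangle$ and analyze the possible configurations of the pair $(\al,\be,\gamma)$ by how $\gamma$ sits relative to the elementary root $\be$ and the simple root $\al_i = \al$. First I would dispose of the two easy cases: if $B(\al,\gamma)=B(\be,\gamma)=0$ then $s_\gamma$ fixes both components by the reflection formula, giving the first line; and if $\gamma \in \{\al,\be\}$, then since $\al_i$ is a simple root and $\be$ is an elementary root orthogonal to $\al_i$, the only possibility is $\gamma = \al = \al_i$ (an elementary root of type 2 or 3 is never simple with a simple $\al_i$ orthogonal to it in the relevant way, and Lemma~\ref{minrootcount} forces $B(\al_i,\be)=0$ so $\gamma=\be$ would contradict $B(\gamma,\gamma)=2$ unless $\be$ is simple, which is handled symmetrically), so $s_\gamma(\al\vee\be) = s_\al(\al_i) \vee s_\al(\be) = (-\al_i)\vee\be$, giving the second line.

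For the main case, I would invoke Lemma~\ref{abgcartan}(1): since $\gamma \neq \al_i$ and $\gamma \neq \be$, one of (i), (ii), (iii) holds, and we are past the case $B(\al,\gamma)=B(\be,\gamma)=0$, so either $B(\al_i,\gamma)=-1$ or ($B(\al_i,\gamma)=0$ and $B(\be,\gamma)=-1$). Proposition~\ref{prop:generalrefl}(i) already gives $s_\gamma(\al\vee\be) = (\al\vee\be)+(\gamma\vee v)$ with $v = B(\al,\gamma)B(\be,\gamma)\gamma - B(\al,\gamma)\be - B(\be,\gamma)\al$, so the content is to show $\gamma\vee v \in \B$ and that it is $W$-conjugate to $\al\vee\be$ inside $\langle s_\al,s_\be,s_\gamma\rangle$. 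In the subcase $B(\al_i,\gamma)=-1$: here $B(\al_i,\gamma)=\pm1$, so Proposition~\ref{prop:generalrefl}(ii) applies with the roles $\al=\al_i$, and gives $\gamma\vee v = \mp(\gamma\vee s_{\al_i}s_\gamma(\be))$ with $v = -s_{\al_i}s_\gamma(\be)$ (taking the sign $B(\al_i,\gamma)=-1$, so $v = +s_{\al_i}s_\gamma(\be)$); moreover $s_\gamma(\al\vee\be) = (\al\vee\be) + s_{\al_i}s_\gamma(\al_i\vee\be)$, exhibiting $\gamma\vee v = s_{\al_i}s_\gamma(\al\vee\be)$ with $s_{\al_i}s_\gamma \in \langle s_\al,s_\be,s_\gamma\rangle$ — so it remains only to check $\gamma \vee v \in \B$. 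For this, note $s_{\al_i}s_\gamma$ maps the simple root $\al_i$ to $\gamma$ (since they are adjacent), and by Corollary~\ref{cor:starop} the map $\phi_{\al_i,\gamma}(x) = s_{\al_i}s_\gamma(x)$ is a bijection from $R_{\al_i}$ to $R_\gamma$, so $\gamma\vee v = \phi_{\al_i,\gamma}(\al_i\vee\be) \in R_\gamma \subseteq \B$, exactly as desired. In the subcase $B(\al_i,\gamma)=0$, $B(\be,\gamma)=-1$: now $v = -\al_i \cdot B(\be,\gamma) = \al_i$, and in fact $v$ might not be so simple — rechecking, $v = B(\al_i,\gamma)B(\be,\gamma)\gamma - B(\al_i,\gamma)\be - B(\be,\gamma)\al_i = 0 - 0 + \al_i = \al_i$. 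So $\gamma \vee v = \gamma \vee \al_i$, and we need $\gamma\vee\al_i \in \B$, i.e. $\gamma$ elementary with respect to $\al_i$, or $\al_i$ elementary with respect to $\gamma$. Since $B(\al_i,\gamma)=0$ and $\gamma$ is simple, $\al_i \in L(\gamma)$ by Definition~\ref{minroot}(1), so indeed $\gamma\vee\al_i \in \B$; and $s_\gamma(\al_i\vee\be) = (\al_i\vee\be) + (\gamma\vee\al_i)$ shows $\gamma\vee v = s_\gamma(\al\vee\be) - (\al\vee\be)$, but more to the point we want it $W$-conjugate — here $s_\gamma s_\be$ maps $\be \mapsto \gamma$, and one checks $s_\gamma s_\be(\al_i) $ stays a root orthogonal to $\gamma$, and a short computation with these two reflections (all lying in $\langle s_\al,s_\be,s_\gamma\rangle$) sends $\al_i\vee\be$ to $\gamma\vee\al_i$.

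The main obstacle I anticipate is the bookkeeping in the second subcase, where $\al$ and $\be$ are no longer symmetric (one is a simple root, the other possibly a genuine type-2 or type-3 elementary root), so Corollary~\ref{cor:starop} does not directly apply to exhibit the conjugating element with $\be$ in place of $\al_i$. I would handle this by a direct computation in $W(A_2) = \langle s_\be, s_\gamma\rangle$ (since $B(\be,\gamma)=-1$ and $B(\al_i,\gamma)=B(\al_i,\be)=0$, the element $s_\gamma s_\be$ acts on the span of $\be,\gamma$ while fixing $\al_i$), showing $s_\gamma s_\be(\al_i \vee \be) = \al_i \vee s_\gamma s_\be(\be) = \al_i \vee \gamma$ — wait, this needs $s_\gamma s_\be(\be) = \gamma$, which holds since $\be,\gamma$ adjacent; so in fact the conjugating element is simply $s_\gamma s_\be$, fixing $\al_i$ and rotating $\be$ to $\gamma$. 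This is cleaner than I first thought, so the genuine difficulty is only confirming there are no further subcases — i.e. that Lemma~\ref{abgcartan} really does exhaust the configurations and that in the $B(\al_i,\gamma)=-1$ branch we never simultaneously need to worry about $B(\be,\gamma)$ being $+1$ (Lemma~\ref{abgcartan}(2) bounds it, and Proposition~\ref{prop:generalrefl}(ii) absorbs whatever it is). I would close by remarking that in every case the conjugating element was built from $\{s_\al,s_\be,s_\gamma\}$, establishing the final sentence.
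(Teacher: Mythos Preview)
Your approach is essentially identical to the paper's: reduce via Lemma~\ref{abgcartan} to the two subcases $B(\al_i,\gamma)=-1$ and $(B(\al_i,\gamma)=0,\ B(\be,\gamma)=-1)$, then use Proposition~\ref{prop:generalrefl}(ii) together with Corollary~\ref{cor:starop} in the first and a direct computation in the second. The paper streamlines the second subcase slightly by applying Proposition~\ref{prop:generalrefl}(ii) with $\be$ in the role of $\al$ (so the conjugating element $s_\be s_\gamma$ appears automatically), whereas you compute $v=\al_i$ from part~(i) and then look for the conjugating element separately; both routes work.

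One correction: in the second subcase you claim $s_\gamma s_\be(\be)=\gamma$, but in fact $s_\gamma s_\be(\be)=s_\gamma(-\be)=-(\be+\gamma)$. The identity you want is $s_\be s_\gamma(\be)=\gamma$ (compute $s_\gamma(\be)=\be+\gamma$, then $s_\be(\be+\gamma)=-\be+\gamma+\be=\gamma$). With the order reversed, your argument goes through verbatim: $s_\be s_\gamma$ fixes $\al_i$ and sends $\be$ to $\gamma$, giving $s_\be s_\gamma(\al_i\vee\be)=\al_i\vee\gamma$ with $s_\be s_\gamma\in\langle s_\al,s_\be,s_\gamma\rangle$. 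Also, your discussion of the case $\gamma=\be$ is overcomplicated: since $\gamma$ is simple by hypothesis, $\gamma=\be$ simply forces $\be$ to be a simple root, and then $s_\be(\al\vee\be)=\al\vee(-\be)=-(\al\vee\be)$ directly; there is nothing to contradict.
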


\begin{proof}
By Theorem \ref{cbminimal} (ii), we may assume without loss of generality that $\al$ is a simple root.
If $\gamma\in \{\al, \be\}$ or $B(\al,\gamma)=B(\be,\gamma)=0$, then $s_\gamma(\al \vee \be)$ 
equals $-\al \vee \be$ or $\al \vee \be$ by direct computation. Otherwise, we must either have 
$B(\al, \gamma)=-1$ or simultaneously have $B(\al, \gamma)=0$ and $B(\be, \gamma)=-1$ by Lemma
\ref{abgcartan}. 
It remains to show that in both these cases, we have $s_\gamma(\al \vee \be)=(\al \vee \be)+(\gamma \vee v)$ for a 
canonical basis element $\gamma\vee v$ with the claimed properties. 


If we have $B(\al, \gamma) = -1$, then $\al$ and $\gamma$ correspond to adjacent vertices of $\Gamma$, 
and Proposition \ref{prop:generalrefl} (ii) implies that $$
s_\gamma(\al \vee \be) = (\al \vee \be) + s_\al s_\gamma(\al \vee \be)
.$$ Corollary \ref{cor:starop} now implies that $s_\al s_\gamma(\al \vee \be)$ is a canonical basis element of the
form $\gamma \vee v$.

The other possibility is that $B(\al, \gamma) = 0$ and $B(\be, \gamma) = -1$. In this case, 
Proposition \ref{prop:generalrefl} (ii) implies that $$
s_\gamma(\al \vee \be) = (\al \vee \be) + s_\be s_\gamma(\al \vee \be)
.$$ Since both $\be$ and $\gamma$ are orthogonal to $\al$, we have $s_\be s_\gamma(\al)=\al$.
The hypothesis $B(\be, \gamma) = -1$ implies that $s_\be s_\gamma(\be) = \gamma$. We conclude that $$
s_\gamma(\al \vee \be) = (\al \vee \be) + (\al \vee \gamma)
,$$ which completes the proof because we have $(\al \vee \gamma) \in \B$.
\end{proof}

We define the {\it \tworoot lattice} to be the $\Z$-span of the canonical basis $\B$. 

\begin{cor}\label{cor:tworootlat}
Let $W$ be the Weyl group of type $\ypqr$ and let $\B$ be the canonical basis of \tworoots of $M$.
\begin{itemize}
    \item[\text{\rm (i)}]{The action of $W$ 
    on the module $M$ leaves invariant the \tworoot lattice $\Z \B$.}
    \item[\text{\rm (ii)}]{Let $W_I$ be a parabolic subgroup of $W$, let $\allroots_I$ be the root system of
    $W_I$, and define  $$
\B_I = \{\al \vee \be\in \B : \al, \be \in \allroots_I\}
.$$ Then the action of $W_I$ leaves invariant the lattice $\Z \B_I$.}
\end{itemize}
\end{cor}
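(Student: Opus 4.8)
The plan is to reduce both parts to checking the claim on the Coxeter generators $s_i$, which act $\Z$-linearly on $S^2(V)$ and satisfy $s_i^2 = 1$. Consequently, for any subgroup $L \leq S^2(V)$ it suffices to show that each $s_i$ carries a generating set of $L$ into $L$: $\Z$-linearity then upgrades this to $s_i(L) \subseteq L$, the relation $s_i^2 = 1$ forces $s_i(L) = L$, and since $W$ is generated by the $s_i$ it follows that $W$ preserves $L$.

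For part (i), I would take the generating set of $\Z\B \leq M$ to be $\B$ itself, which is a basis of $M$ by Theorem \ref{canbas} (so in particular $\B \subseteq M$). Theorem \ref{thm:canbasrefl} then does all the work: for each $b = \al \vee \be \in \B$ and each simple root $\gamma$, the element $s_\gamma(b)$ equals $b$, or $-b$, or $b + b'$ for some $b' = \gamma \vee v \in \B$, and in every case $s_\gamma(b) \in \Z\B$. By the reduction above this gives $s_\gamma(\Z\B) = \Z\B$ for every generator, hence the $W$-invariance of the \tworoot lattice $\Z\B$ inside $M$.

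For part (ii), I would simply unwind the definition of $\B_I$. Since $\B_I$ contains $\al \vee \be$ for \emph{every} pair $\al, \be \in \allroots_I$ — orthogonal or not — it is already stable as a set under $W_I$: for $w \in W_I$ one has $w(\al \vee \be) = (w\al) \vee (w\be)$, and reading $\allroots_I$ as the realized root system $W_I \cdot \{\al_i : i \in I\} \subseteq V$, the standard fact that a reflection group permutes its own roots gives $w\al, w\be \in \allroots_I$, so $w(\al \vee \be) \in \B_I$. Hence $W_I$ permutes $\B_I$ and a fortiori preserves $\Z\B_I$. I do not expect a genuine obstacle in either part: (i) is a one-line consequence of Theorem \ref{thm:canbasrefl} once $\Z$-linearity and $s_i^2 = 1$ are invoked, and (ii) is pure bookkeeping. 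The only point worth flagging is that $\Z\B_I$ is genuinely \emph{not} contained in $\Z\B$ (for instance $\al \vee \al \notin M$ since $B(\al,\al) = 2$), so (ii) really does require its own elementary argument rather than a reduction to (i).
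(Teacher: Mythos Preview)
Your argument for part (i) is correct and matches the paper's proof exactly: both invoke Theorem~\ref{thm:canbasrefl} to check that each generator $s_\gamma$ sends each element of $\B$ into $\Z\B$.

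For part (ii) there is a genuine discrepancy of interpretation. You read the definition of $\B_I$ literally as \emph{all} symmetrized pairs $\al\vee\be$ with $\al,\be\in\allroots_I$, and under that reading your one-line permutation argument is correct (and indeed $\Z\B_I$ then lies outside $M$, as you observe). However, the paper's own proof and the immediately following Remark~\ref{rmk:tworootlat} make clear that the intended meaning is $\B_I = \{\al\vee\be \in \B : \al,\be\in\allroots_I\}$, i.e.\ the canonical basis elements whose components lie in $\allroots_I$. Under that reading your argument does not apply, since $W_I$ does \emph{not} permute $\B_I$ as a set (it only preserves the lattice $\Z\B_I$). The paper instead uses the final clause of Theorem~\ref{thm:canbasrefl}: the new basis element $\gamma\vee v$ appearing in $s_\gamma(\al\vee\be) = (\al\vee\be) + (\gamma\vee v)$ is conjugate to $\al\vee\be$ by an element of $\langle s_\al, s_\be, s_\gamma\rangle \leq W_I$, hence also has both components in $\allroots_I$, so $\gamma\vee v\in\B_I$. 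Your proof is valid for the statement as literally written, but you should be aware that this is almost certainly a typo in the definition, and the substantive content of (ii) is the version with $\B_I\subseteq\B$.
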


\begin{proof}
The formula in Theorem \ref{thm:canbasrefl} shows that a generator of $W$ sends a canonical basis element 
to an integral linear combination of canonical basis elements, which proves (i). 

To prove (ii), it is enough to show that if $s_i$ is a generator of $W_I$ and we have $\al \vee \be \in \B_I$, 
then $s_i(\al \vee \be) \in \Z \B_I$. This is immediate from Theorem \ref{thm:canbasrefl}, because the
\tworoots $\al \vee \be$ and $\gamma \vee v$ in that result are conjugate in $W_I$.
\end{proof}

\begin{rmk}\label{rmk:tworootlat}
Note that if the parabolic subgroup $W_I$ is also of type $\yg{a'}{b'}{c'}$ (for some values of $a'$, $b'$, and $c'$) then 
the set $\B_I$ coincides with the canonical basis $\B(a', b', c')$.
\end{rmk}

\section{Sign-coherence}\label{sec:coherence}

Following the theory of cluster algebras (\cite[Definition 2.2 (i)]{cao19}, 
\cite[Definition 6.12]{fomin07}), we say that a matrix $A$ is {\it column sign-coherent} (or
``sign-coherent" for short) if any two nonzero entries in the same column of $A$ have the same sign. 
We extend this terminology to say that a basis of a finite dimensional group representation $V$ is 
a {\it sign-coherent basis} of $V$ if every element of the group acts on $V$ by a sign-coherent 
matrix with respect to the basis, and we say $V$ is a {\it sign-coherent representation} if it admits 
a sign-coherent basis.


Sign-coherent representations exist in abundance. Some (trivial) examples of this phenomenon are representations
arising from permutations or signed permutations. An interesting and well-known example of a
sign-coherent basis is the basis of simple roots for the reflection representation of a Weyl group.
It also follows quickly from the definitions that a direct sum or tensor product of sign-coherent representations
is sign-coherent, as is the symmetric square of a sign-coherent representation. In particular, the standard basis
of $S^2(V)$ is a sign-coherent basis.

It is more difficult to find sign-coherent bases for irreducible modules, such as the direct summands of the
module $M$ in Theorem \ref{s2vdecomp} below. 
In this section, we will establish the following sign-coherence property of the canonical basis
$\B$:

\begin{theorem}\label{thm:coherence}
Let $W$ be a Weyl group of type $\ypqr$. The canonical basis $\B$ is a sign-coherent basis for the module $M$. With respect
to this basis, every element $w \in W$ is represented by a sign-coherent matrix of integers.
\end{theorem}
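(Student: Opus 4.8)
The plan is to prove that every $w \in W$ acts on $M$ by a sign-coherent integer matrix with respect to $\B$ by induction on the length $\ell(w)$ in the Coxeter generators, using Theorem \ref{thm:canbasrefl} as the base case and a careful bookkeeping of signs as the inductive step. The integrality of the matrices is immediate from Corollary \ref{cor:tworootlat} (i), so the real content is sign-coherence. Since sign-coherence of a matrix means that within any single column all nonzero entries share a sign, it is equivalent to the statement: for every $w \in W$ and every canonical basis element $b \in \B$, when we write $w(b) = \sum_{b' \in \B} c_{b,b'} b'$, the nonzero coefficients $c_{b,b'}$ all have the same sign. Equivalently, every $w(b)$ is either a nonnegative integer combination of elements of $\B$ or a nonpositive integer combination of elements of $\B$. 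So I would reformulate the goal as: \emph{for all $w \in W$ and all $b \in \B$, $w(b) \in \Z_{\geq 0}\B \cup \Z_{\leq 0}\B$.}

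The base case $\ell(w) = 1$, i.e. $w = s_\gamma$ a simple reflection, is exactly Theorem \ref{thm:canbasrefl}: $s_\gamma(b)$ is one of $b$, $-b$, or $b + b'$ with $b' \in \B$, each of which lies in $\Z_{\geq 0}\B \cup \Z_{\leq 0}\B$. For the inductive step, write $w = s_\gamma w'$ with $\ell(w') = \ell(w) - 1$, so by induction $w'(b) = \sum_i c_i b_i$ with all $c_i$ of one sign, say all $\geq 0$ (the other case is symmetric, or follows by negating). Then $w(b) = s_\gamma(w'(b)) = \sum_i c_i s_\gamma(b_i)$, and by Theorem \ref{thm:canbasrefl} each $s_\gamma(b_i)$ is $b_i$, $-b_i$, or $b_i + b_i'$. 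The difficulty is that a single multiplication by $s_\gamma$ can produce negative coefficients (from the $-b_i$ terms) alongside positive ones, so naive term-by-term estimation does not close the induction; a more global argument is needed.

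The key idea to handle the obstacle is to exploit the identification, from Proposition \ref{orthorbits} (iii) and the discussion around the ``monoidal order'', between canonical basis elements and positive real \tworoots together with the positivity structure: every positive real \tworoot is a nonnegative combination of elements of $\B$, every negative one a nonpositive combination, and these two cones meet only in $0$. So I would argue that for a fixed $b \in \B$ (a positive \tworoot, with components that can be chosen positive roots $\al, \be$), the element $w(b) = w(\al) \vee w(\be)$, where $\{w(\al), w(\be)\}$ is again an orthogonal pair of real roots. The crucial geometric fact is that $w(\al)$ and $w(\be)$ lie in a common half-space: more precisely, one shows that for any $w \in W$ the two roots $w(\al), w(\be)$ obtained from an orthogonal pair of \emph{positive} roots are either both positive, or both negative, or --- and this is the subtle point requiring the structure of $\B$ and the parabolic-subgroup argument of Lemma \ref{lem:branchnodes} and Proposition \ref{orthorbits} --- never ``mixed'' in a way that destroys sign-coherence of $w(b)$ in the basis $\B$. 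Concretely: if $w(\al)$ and $w(\be)$ are both positive then $w(b) = w(\al)\vee w(\be)$ is a positive \tworoot and hence a nonnegative combination of $\B$; if both negative, a nonpositive combination; and if exactly one is negative, say $w(\al) = -\al'$ with $\al'$ positive and $w(\be) = \be'$ positive, then $w(b) = -(\al' \vee \be')$, again a nonpositive combination of $\B$ (since $\al' \vee \be'$ is a positive real \tworoot). In every case $w(b) \in \Z_{\geq 0}\B \cup \Z_{\leq 0}\B$, which is precisely column sign-coherence. The main obstacle, and the part that needs the most care, is establishing rigorously that a positive real \tworoot expands with nonnegative coefficients in $\B$ (and hence a negative one with nonpositive coefficients): this should follow by combining Proposition \ref{orthorbits} (iii) (every real \tworoot is $W$-conjugate to an element of $\B$) with an induction on the monoidal order, peeling off one simple reflection at a time and invoking Theorem \ref{thm:canbasrefl}, but one must check that lowering in the monoidal order really does correspond to subtracting a nonnegative combination of basis elements --- this is where I expect to spend the bulk of the proof, and it is presumably what Theorem \ref{thm:coherence2} and Proposition \ref{prop:refinement} are set up to supply.
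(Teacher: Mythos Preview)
Your reformulation is correct and matches the paper's: sign-coherence amounts to showing that every positive real \tworoot is a nonnegative integer combination of $\B$ (this is Theorem~\ref{thm:coherence2}). You are also right that the naive induction on $\ell(w)$, writing $w = s_\gamma w'$ and expanding $s_\gamma$ term by term, does not close.

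However, your proposed way around this has a genuine gap. The heart of your argument is the implication ``positive real \tworoot $\Rightarrow$ nonnegative combination of $\B$'', and you propose to supply it via the monoidal order and Proposition~\ref{prop:refinement}. Two problems: first, the proof of Proposition~\ref{prop:refinement} \emph{uses} Theorem~\ref{thm:coherence2} (the sentence ``Every positive \tworoot is a linear combination of elements of $\B$ with positive integral coefficients by Theorem~\ref{thm:coherence2}''), so invoking it here is circular. Second, the monoidal order of \cite{cohen06} is only defined for \emph{finite} simply laced Weyl groups, whereas Theorem~\ref{thm:coherence} is stated for all $\ypqr$, including the infinite cases. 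There is also a minor slip: if $w(\al)$ and $w(\be)$ are both negative, then $w(\al)\vee w(\be) = (-\al')\vee(-\be') = \al'\vee\be'$ is a \emph{positive} \tworoot, so this case should give a nonnegative combination (cf.\ Remark~\ref{rmk:posremark}).

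The paper's proof does proceed by induction on $\ell(w)$, but not by expanding $s_\gamma$ on a sum. Instead it fixes $b = \al\vee\be \in \B$ and repeatedly shortens $w$ on the \emph{right}: if $w(\al)<0$ or $w(\be)<0$ one replaces $w$ by $ws_\al$ or $ws_\be$; otherwise one picks $\gamma$ with $\ell(ws_\gamma)<\ell(w)$ and, via Corollary~\ref{cor:starop}, either reduces to a shorter element acting on another basis element, or arranges that $ws_\gamma(\al)$, $ws_\gamma(\be)$, $ws_\gamma(\gamma)$, and $ws_\gamma(\be')$ (with $\be' = s_\al s_\gamma(\be)$) are \emph{all} positive. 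In that last situation $w(b)$ splits as a sum of two positive \tworoots, each of the form $(ws_\gamma)(b')$ with $b'\in\B$, and the inductive hypothesis together with Remark~\ref{rmk:posremark} finishes. The idea you were missing is that, rather than accepting mixed signs in the expansion, one first peels off further reflections on the right until all relevant images are simultaneously positive.
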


Because each real \tworoot is $W$-conjugate to an element of $\B$ (see Proposition \ref{orthorbits} (iii)), the \tworoots of $W$ are
precisely the set of possible columns of matrices representing the action of elements $w \in W$ with respect to the basis $\B$.
We can therefore restate Theorem \ref{thm:coherence} as follows:
\begin{theorem}\label{thm:coherence2}
Let $W$ be a Weyl group of type $\ypqr$. Then any real \tworoot $\al \vee \be$ of $W$ is an 
integral linear combination of elements of $\B$ with coefficients of like sign.
\end{theorem}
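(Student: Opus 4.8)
The plan is to deduce the statement from Theorem \ref{thm:canbasrefl} by an induction on Coxeter length. By Proposition \ref{orthorbits}(iii) every real \tworoot is $W$-conjugate to an element of $\B$, so it suffices to prove the claim that if $b=\al_i\vee\be_0\in\B$ (with $\al_i$ simple) and $w\in W$ satisfies $w(\al_i)>0$ and $w(\be_0)>0$, then $w(b)$ is a nonnegative integer combination of $\B$. Indeed, a general real \tworoot $\al\vee\be$ is, up to an overall sign, a \emph{positive} \tworoot in the sense of Definition \ref{def:components}, and a positive \tworoot $w(b)=w(\al_i)\vee w(\be_0)$ has $w(\al_i),w(\be_0)$ of the same sign, so after replacing $w$ by $ws_{\al_i}s_{\be_0}$ if they are both negative (which does not change $w(b)$) we may assume $w(\al_i),w(\be_0)>0$. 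By Corollary \ref{cor:tworootlat}(i) such a $w(b)$ automatically lies in the \tworoot lattice $\Z\B$, so the only issue is the sign of its coordinates.

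I would prove the claim by induction on $\ell(w)$, the case $\ell(w)=0$ being trivial. For $\ell(w)\geq 1$, choose a simple reflection $s_j$ with $\ell(s_jw)<\ell(w)$ and put $w'=s_jw$. If $\al_j$ is one of the two components of $w(b)$, say $w(b)=\al_j\vee\delta$, then $\delta$ is a positive root orthogonal to $\al_j$, hence a positive root of the reflection subgroup $W_{\al_j}=\Stab_W(\al_j)$, whose simple roots are the elementary roots with respect to $j$ by Theorem \ref{cbminimal}(i); expanding $\delta$ in those simple roots with nonnegative integer coefficients exhibits $w(b)=\al_j\vee\delta$ as a nonnegative combination of elements $\al_j\vee(\text{elementary root})\in\B$, without using the inductive hypothesis. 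Otherwise $w(\al_i)\neq\al_j$ and $w(\be_0)\neq\al_j$, so $w'(\al_i)=s_j(w(\al_i))>0$ and $w'(\be_0)=s_j(w(\be_0))>0$; by induction $w'(b)=\sum_l c_lb_l$ with all $c_l\geq 0$ and $b_l\in\B$, so $w(b)=s_j(w'(b))=\sum_l c_l\,s_j(b_l)$, each $s_j(b_l)$ being $b_l$, $-b_l$, or $b_l+b_l'$ by Theorem \ref{thm:canbasrefl}. In the basis $\B$ the reflection $s_j$ is therefore an involutory integer matrix which, relative to the partition of $\B$ into the set $R_j=\{\al_j\vee\be:\be\text{ elementary with respect to }j\}$ and its complement, has the block form $\left(\begin{smallmatrix}-I&N\\0&I\end{smallmatrix}\right)$ with $N$ a $0/1$ matrix (each extra term $b_l'$ has $\al_j$ as a component, hence lies in $R_j$). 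Applying this to the nonnegative vector $(c_l)$ yields a vector that is nonnegative in every coordinate outside $R_j$ and whose coordinate at $b_m=\al_j\vee\be_m\in R_j$ equals $-c_m+\sum_{l:\,s_j(b_l)=b_l+b_m}c_l$, so everything reduces to the \emph{compensation inequality} $\sum_{l:\,s_j(b_l)=b_l+b_m}c_l\geq c_m$ for each such $m$.

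The compensation inequality is the step I expect to be the main obstacle. I would attack it by pinning down, on one side, the partners $b_l$ with $s_j(b_l)=b_l+b_m$: by the proof of Theorem \ref{thm:canbasrefl} these are essentially forced, namely for each neighbour $k$ of $j$ the single partner $b_l=s_js_k(b_m)=\phi_{jk}(b_m)$ furnished by the star operation of Corollary \ref{cor:starop}, together with a small extra family only when $b_m=\al_j\vee\al_k$ with $\al_k$ a simple root not adjacent to $\al_j$; and, on the other side, by controlling suitable coordinates of the single \tworoot $v=w'(b)=w'(\al_i)\vee w'(\be_0)$ in the standard basis of $S^2(V)$, which is transparently sign-coherent and in which $v$ has all coordinates of one sign. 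For instance the coefficient of $\al_j\vee\al_j$ in $v$ equals $(w'(\al_i))_j\,(w'(\be_0))_j$ and also equals the sum of the $c_l$ over those $b_l\in R_j$ whose second component is of type $2$ or $3$, while coordinates of the form $\al_j\vee\al_t$ govern the type $1$ elements of $R_j$; carrying this bookkeeping through the three kinds of elementary roots of Definition \ref{minroot} should yield the inequality. An alternative organization, paralleling the proof of Theorem \ref{canbas}, is an induction on the rank $n=a+b+c+1$: after the base case $D_4$, a positive \tworoot whose components are both supported away from the end vertex $1$ of the longest branch is a \tworoot of the parabolic subsystem of type $\ygc{a}{b}{c-1}$, handled by induction since $\B(a,b,c-1)\subseteq\B(a,b,c)$, and the remaining \tworoots are reduced to that case by applying $s_1$ and, where needed, $s_2$, so that only the three new families $\al_1\vee\theta_1$, $\al_2\vee\eta_2$, $\al_1\vee\al_h$ $(h\notin\{1,2\})$ of basis elements enter and the sign bookkeeping stays manageable.
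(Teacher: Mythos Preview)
Your setup and the direct case (where $\al_j$ is a component of $w(b)$, handled via the simple roots of $W_{\al_j}$) are fine. The genuine gap is the compensation inequality. As you yourself note, this is the main obstacle, and neither of your two sketches proves it. In fact the inequality is not a reduction at all: it asks that whenever a positive \tworoot $v$ has nonnegative $\B$-coordinates and $s_j(v)$ is again a positive \tworoot, then $s_j(v)$ also has nonnegative $\B$-coordinates. Since every positive \tworoot is reached from a basis element by a sequence of such moves, this is equivalent to the theorem itself. Your standard-basis bookkeeping would have to recover, coordinate by coordinate, the full change of basis between $\B$ and the standard basis of $S^2(V)$, which is exactly the nontrivial content you are trying to establish; and the rank-induction alternative does not obviously control the sign of the new coefficients $\al_1\vee\theta_1,\ \al_2\vee\eta_2$ when one peels off the end vertex.

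The paper's proof uses the same induction on $\ell(w)$ but multiplies on the \emph{right} rather than the left: one picks a simple root $\gamma$ with $\ell(ws_\gamma)<\ell(w)$ and writes $w(\al\vee\be)=(ws_\gamma)\bigl(s_\gamma(\al\vee\be)\bigr)$. Theorem~\ref{thm:canbasrefl} then expresses $s_\gamma(\al\vee\be)$ as $\pm(\al\vee\be)$ or as $(\al\vee\be)+(\gamma\vee v)$ with $(\gamma\vee v)\in\B$, so one applies the inductive hypothesis to $ws_\gamma$ acting on at most two \emph{basis elements}. The work that replaces your compensation inequality is a finite case analysis (using Lemma~\ref{abgcartan} and Corollary~\ref{cor:starop}) showing that, after possibly passing to $ws_\gamma s_\al$, $ws_\gamma s_\be$, or $ws_\gamma s_{\be'}$ with $\be'=s_\al s_\gamma(\be)$, one can arrange that both summands $ws_\gamma(\al\vee\be)$ and $ws_\gamma(\gamma\vee v)$ are \emph{positive} \tworoots, whence Remark~\ref{rmk:posremark} and the inductive hypothesis give nonnegativity. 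The difference is structural: right multiplication keeps the argument at the level of single basis elements and a bounded case analysis, whereas your left multiplication forces you to control a simple reflection acting on an arbitrary nonnegative combination, which is where the difficulty lives.
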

\noindent Here, the fact that any real \tworoot of $W$ is an integral linear combination of elements of
$\B(a,b,c)$ with coefficients of like sign is similar to the fact that any root of $W$ is an
integral linear combination of simple roots of $W$ with coefficients of like sign. 
Note also that Theorem \ref{thm:coherence2} implies that one can characterize the basis $\B$
as the set of positive \tworoots that cannot be expressed as a positive linear combination of other positive
\tworoots. 

\begin{rmk}
    In an earlier version of this paper, we conjectured that theorems \ref{thm:coherence} and
    \ref{thm:coherence2} hold for all Coxeter groups of type $\ypqr$ but proved the theorems only
    in the finite and affine cases (our conjecture beyond these types was based on extensive
    computer calculations). The proof for the general case that we will give below is based on a
    proof that was communicated to us by Robert B. Howlett.
\end{rmk}

To prove Theorem
\ref{thm:coherence2}, we note that Proposition \ref{orthorbits} (iii) and Corollary
    \ref{cor:tworootlat} (i) imply that any real \tworoot
$\al \vee \be$ is a linear combination of elements of $\B$ with integer coefficients, so it remains 
to prove that these integers are positive. We do so below. In the proof, we will freely use the result \cite[Proposition 5.7]{humphreys90} that if $w \in W$ and $\al$ is a positive real
root, then either $w(\al) > 0$ and $\ell(ws_\al) > \ell(w)$, or $w(\al) < 0$ and $\ell(ws_\al) < \ell(w)$.
We will also make use of the following remark in the proof. 

\begin{rmk}\label{rmk:posremark}
Because the components of any element of $\B$ can be taken to be positive roots, it follows that a
\tworoot is positive (respectively, negative) if and only it is a positive (respectively, negative)
linear combination of the standard basis of $S^2(V)$. In turn, this implies that if $w(\al \vee
\be)$ is a sign-coherent linear combination of elements of $\B$, then $w(\al \vee \be)$ is a
positive linear combination if $w(\al)$ and $w(\be)$ are both positive or both negative roots, and
$w(\al \vee \be)$ is a negative linear combination if one of $w(\al)$ and $w(\be)$ is a positive
root and the other is a negative root.
\end{rmk}

\begin{proof}[Proof of Theorem \ref{thm:coherence2}]
    By the discussions following Theorem \ref{thm:coherence}, to prove Theorem \ref{thm:coherence2} it
suffices to show that for any $w\in W$ and $\al\vee\beta\in \B$, the 2-root $w(\al\vee \beta)$ is a
linear combination of $\B$ with coefficients of like sign.  We prove this fact by induction
on the length, $\ell(w)$, of $w \in W$. The case $\ell(w) = 0$ is trivial, and the case $\ell(w) =
1$ follows from Theorem \ref{thm:canbasrefl}. Suppose then that we have $\ell(w) > 1$.

If we have $w(\al) < 0$, then we have $\ell(ws_\al) < \ell(w)$ and $w(\al \vee \be) = - ws_\al(\al
\vee \be)$, and the proof is completed by applying the inductive hypothesis to $ws_\al$. A similar
argument applies if $w(\be) < 0$, so we may assume from now on that both $w(\al) > 0$ and $w(\be) >
0$.

Fix a simple root $\gamma$ with the property that $\ell(w s_\gamma) < \ell(w)$, which implies that
$ws_\gamma(\gamma) > 0$. If we have $s_\gamma(\al \vee \be) = \pm (\al \vee \be)$, then the proof
follows by applying the inductive hypothesis to $ws_\gamma$ as in the previous paragraph. We may
therefore assume that we are in the third case of the statement of Theorem \ref{thm:canbasrefl}, so that $\gamma
\not\in \{\al, \be\}$, and $\gamma$ is not orthogonal to both $\al$ and $\be$. Since $\gamma$ and
$\al$ are distinct simple roots, we must have $B(\al, \gamma) \in \{0, -1\}$. 

Suppose that $ws_\gamma(\al) < 0$, which implies that $\ell(w s_\gamma s_\al) < \ell(w s_\gamma)$.
The assumption that $w(\al) > 0$ implies that $B(\al, \gamma) \ne 0$, and it follows from the
previous paragraph that $B(\al, \gamma) = -1$. Corollary \ref{cor:starop} implies that $s_\al s_\gamma(\al \vee \be)
= (\gamma \vee s_\al s_\gamma(\be))$ is an element of $\B$.  We then have $$ w(\al \vee \be) = w
s_\gamma s_\al(\gamma \vee s_\al s_\gamma(\be)) ,$$ and the proof follows by applying the inductive
hypothesis to $w s_\gamma s_\al$.

Suppose that $ws_\gamma(\be) < 0$, which implies that $\ell(w s_\gamma s_\be) < \ell(w s_\gamma)$.
Let $c:=B(\beta,\gamma)$. The assumption that $w(\be) > 0$ implies that $B(\be, \gamma) \ne 0$, and Lemma \ref{abgcartan} then
implies that $c = \pm 1$; in particular, we have $c^2=1$. It follows that
\[
    s_\be s_\gamma(\be) = s_\beta(\beta-c\gamma)=-\beta-c(\gamma-c\beta)=-c\gamma
    .
\]
We claim that one of  $\pm s_\beta s_\gamma(\alpha\vee\beta)$ is an element of $\B$. If $B(\al, \gamma) = 0$, then we
have $s_\be s_\gamma(\al) = \al$ and $s_\be s_\gamma(\al \vee \be) = \al \vee
(-c\gamma)=-c(\al\vee\gamma)$, which
proves the claim in this case because $\al$ and $\gamma$ are orthogonal simple roots and $c=\pm 1$. The other
possibility is that $B(\al, \gamma) = -1$, in which case we have \[s_\alpha
s_\gamma(\alpha)=\gamma\] and 
\[
    s_\beta s_\gamma(\alpha)=
    s_\beta(\alpha+\gamma)=\alpha+\gamma-c\beta=s_\alpha(\gamma-c\beta)=-cs_\alpha s_\gamma(\beta).
\]
Combining the three equations displayed above, we find that 
\[
s_\be s_\gamma(\al \vee \be) =
(-cs_\alpha s_\gamma(\beta))\vee (-c\gamma)= 
s_\alpha s_\gamma(\beta)\vee s_\alpha s_\gamma(\alpha)= 
    s_\alpha s_\gamma(\beta\vee\alpha)
\]
which completes the proof of the claim by Corollary \ref{cor:starop}.  The claim then
implies that $$ w(\al \vee \be) = w s_\gamma s_\be(s_\be s_\gamma(\al \vee \be)) ,$$ and the proof
follows by applying the inductive hypothesis to $w s_\gamma s_\be$.

By the previous three paragraphs, we may assume from now on that $ws_\gamma(\al)$, $ws_\gamma(\be)$,
and $ws_\gamma(\gamma)$ are all positive roots.

If we have $B(\al, \gamma) = 0$ then, since we are assuming that we are not in the first two cases
of the statement of Theorem \ref{thm:canbasrefl}, we have $B(\be, \gamma) = -1$ by  Lemma \ref{abgcartan}. This implies that
$s_\gamma(\al \vee \be) = (\al \vee \be) + (\al \vee \gamma)$, and we therefore have $$ w(\al \vee
\be) = ws_\gamma(s_\gamma(\al \vee \be)) = ws_\gamma(\al \vee \be) + ws_\gamma(\al \vee \gamma) .$$
It follows from the previous paragraph that $ws_\gamma(\al \vee \be)$ and $ws_\gamma(\al \vee
\gamma)$ are positive \tworoots.  Remark \ref{rmk:posremark} and the inductive hypothesis applied to
$ws_\gamma$ then imply that each of $ws_\gamma(\al \vee \be)$ and $ws_\gamma(\al \vee \gamma)$ is a
nonnegative integral linear combination of elements of $\B$. It follows that $w(\al \vee \be)$ is
also a nonnegative integral linear combination of elements of $\B$, which completes the proof in
this case.

We may suppose from now on that $B(\al, \gamma) = -1$. Let $c = B(\be, \gamma)$, so that $c \in
\{-1, 0, 1\}$ by Lemma \ref{abgcartan}.  Define $\be' = s_\al s_\gamma(\be) = \be - c(\al +
\gamma)$, and note that $s_\al(\be') = \be - c\gamma$ is also a root.  Suppose that $ws_\gamma(\be')
< 0$, which implies that $\ell(ws_\gamma s_{\be'}) < \ell(ws_\gamma)$.  The assumption that
$ws_\gamma(\be) > 0$ implies that $\be \ne \be'$, which rules out the case $c = 0$.  We now have
$c^2 = 1$, $s_{\be'} s_\gamma(\al) = \pm \be$ and $s_{\be'} s_\gamma(\be) = \pm \al$.  This implies
that $$ w(\al \vee \be) = ws_\gamma s_{\be'}(s_{\be'} s_\gamma(\al \vee \be)) = \pm ws_\gamma
s_{\be'}(\al \vee \be) ,$$ and the proof follows by applying the inductive hypothesis to $ws_\gamma
s_{\be'}$.

We have reduced to the case where $B(\al, \gamma) = -1$ and $w s_\gamma(\be') > 0$.  We have $s_\al
s_\gamma(\al) = \gamma$, and $s_\al s_\gamma(\be) = \be'$.  The case of $B(\al, \gamma) = -1$ in the
proof of Theorem \ref{thm:canbasrefl} now implies that $$ w(\al \vee \be) = ws_\gamma(s_\gamma(\al \vee \be)) =
ws_\gamma(\al \vee \be) + ws_\gamma(s_\al s_\gamma(\al \vee \be)) = ws_\gamma(\al \vee \be) +
ws_\gamma(\gamma \vee \be') ,$$ and Corollary \ref{cor:starop} implies that $(\gamma \vee \be') = s_\al s_\gamma(\al
\vee \be)$ is an element of $\B$.  We have shown that all of $ws_\gamma(\al)$, $ws_\gamma(\be)$,
$ws_\gamma(\gamma)$, and $w s_\gamma(\be')$ are positive roots, which implies that $ws_\gamma(\al
\vee \be)$ and $ws_\gamma(\gamma \vee \be')$ are positive \tworoots. Remark \ref{rmk:posremark} and
the inductive hypothesis applied to $ws_\gamma$ then imply that each of $ws_\gamma(\al \vee \be)$
and $ws_\gamma(\gamma \vee \be')$ is a nonnegative integral linear combination of elements of $\B$.
It follows that $w(\al \vee \be)$ is also a nonnegative integral linear combination of elements of
$\B$, which completes the proof.
\end{proof}

In finite types types $A$ and $D$, Theorem \ref{thm:coherence2} can be interpreted diagrammatically using the conventions
of Notation \ref{not:aandd}. 

We can depict positive roots of types $A$ and $D$ as arcs connecting rows of dots labelled $1, 2, \ldots, n$.
A positive root of the form $\ep_i - \ep_j$ (respectively, $\ep_i + \ep_j$) is depicted as an undecorated
(respectively, decorated) arc joining point $i$ to point $j$. We can then depict positive \tworoots as
pairs of (possibly decorated) arcs connecting points $i$ and $j$. In type $D$, this may result in two arcs 
connecting the same two points, one of which is decorated and one of which is not.

In this context, the linear relations between \tworoots that one obtains from Theorem
\ref{thm:coherence2} can be 
interpreted as a type of skein relation with positive
coefficients. For example, in type $A$ the positive \tworoot $(\al_1 + \al_2) \vee (\al_2 + \al_3)$ decomposes 
into a positive linear combination of canonical basis elements by Theorem \ref{thm:coherence2}: $$
(\al_1 + \al_2) \vee (\al_2 + \al_3) = (\al_1 \vee \al_3) + (\al_2 \vee (\al_1 + \al_2 + \al_3))
.$$ Writing this in terms of coordinates, we have \begin{equation}\label{eq:skein1}
(\ep_1 - \ep_3) \vee (\ep_2 - \ep_4) = 
\big( (\ep_1 - \ep_2) \vee (\ep_3 - \ep_4) \big)
+
\big( (\ep_2 - \ep_3) \vee (\ep_1 - \ep_4) \big) 
.
\end{equation} Pictorially, this shows how to express a diagram with a crossing as a positive linear combination of diagrams
with fewer crossings. The canonical basis elements in this case correspond to the legal configurations of arcs in
the top half of diagrams for the Temperley--Lieb algebra.


\begin{figure}[ht!]
    \centering
    \begin{tikzpicture}[anchorbase]      
        \draw (0,0) arc(-180:0:0.75) to (1,0);
        \draw (0.75,0) arc(-180:0:0.75) to (2.25,0);
        \draw (-0.3,0)--(2.55,0);
    \end{tikzpicture}
    \ \;$\longrightarrow$\;\
    \begin{tikzpicture}[anchorbase]
        \draw (0,0) arc(-180:0:0.375) to (0.75,0);
        \draw (1.5,0) arc(-180:0:0.375) to (2.25,0);
        \draw (-0.3,0)--(2.55,0);
    \end{tikzpicture}
    \ \;$+$\;\
    \begin{tikzpicture}[anchorbase]
        \draw (0,0) arc(-180:0:1.125) to (2.25,0);
        \draw (0.75,0) arc(-180:0:0.375) to (1.5,0);
        \draw (-0.3,0)--(2.55,0);
    \end{tikzpicture}
    \caption{Equation \ref{eq:skein1} interpreted as a skein relation}
\label{fig:skein1}
    \end{figure}
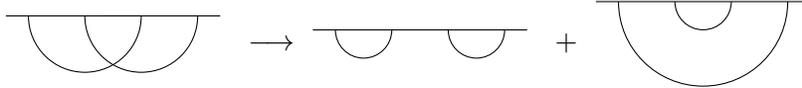

Something similar happens in type $D$. The positive \tworoot $(\ep_1 + \ep_4) \vee (\ep_2 + \ep_3)$ in type $D_4$
decomposes into the following positive linear combination of canonical basis elements: \begin{align}\label{eq:skein2}
(\ep_1 + \ep_4) \vee (\ep_2 + \ep_3) 
&= (\al_1 \vee \al_3) + (\al_2 \vee \eta_{1,3}) + (\al_4 \vee \theta_4) \\
&= 
\big( (\ep_1 - \ep_2) \vee (\ep_3 - \ep_4) \big) +
\big( (\ep_2 - \ep_3) \vee (\ep_1 - \ep_4) \big) +
\big( (\ep_3 + \ep_4) \vee (\ep_1 + \ep_2) \big). \nonumber
\end{align} Pictorially, this shows how to express a diagram with a non-exposed decorated arc (in this case, the
one between 2 and 3) as a linear combination of diagrams that have fewer such features. After performing a left-right
reflection, the canonical basis elements in this case correspond to the legal configurations of arcs in the top half of 
diagrams for the Temperley--Lieb algebra of type $D$, as described by the first author in \cite[Theorem 4.2]{green98}. 
The positive \tworoots of the form $(\ep_i - \ep_j) \vee (\ep_i + \ep_j)$ correspond to the ``diagrams of type 1" of
\cite{green98}, and the other positive \tworoots correspond to the ``diagrams of type 2".


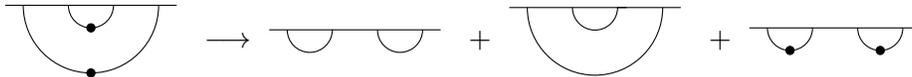
\begin{figure}[ht!]
    \centering
    \begin{tikzpicture}[anchorbase]      
        \draw (0,0) arc(-180:0:0.75*1.2) to (1.5*1.2,0);
        \draw (0.5*1.2,0) arc(-180:0:0.25*1.2) to (1*1.2,0);
        \draw (-0.2*1.2,0)--(1.7*1.2,0);
        \node[small node] at (0.75*1.2,-0.25*1.2) (a) {};
        \node[small node] at (0.75*1.2,-0.75*1.2) (b) {};
    \end{tikzpicture}
    \ \;$\longrightarrow$\;\
    \begin{tikzpicture}[anchorbase]
        \draw (0,0) arc(-180:0:0.25*1.2) to (0.5*1.2,0);
        \draw (1*1.2,0) arc(-180:0:0.25*1.2) to (1.5*1.2,0);
        \draw (-0.2*1.2,0)--(1.7*1.2,0);
    \end{tikzpicture}
    \ \;$+$\;\
    \begin{tikzpicture}[anchorbase]
        \draw (0,0) arc(-180:0:0.75*1.2) to (1.5*1.2,0);
        \draw (0.5*1.2,0) arc(-180:0:0.25*1.2) to (1.1*1.2,0);
        \draw (-0.2*1.2,0)--(1.7*1.2,0);
    \end{tikzpicture}
    \ \;$+$\;\
    \begin{tikzpicture}[anchorbase]
        \draw (0,0) arc(-180:0:0.25*1.2) to (0.5*1.2,0);
        \draw (1*1.2,0) arc(-180:0:0.25*1.2) to (1.5*1.2,0);
        \draw (-0.2*1.2,0)--(1.7*1.2,0);
        \node[small node] at (0.25*1.2,-0.25*1.2) (a) {};
        \node[small node] at (1.25*1.2,-0.25*1.2) (b) {};
    \end{tikzpicture}
    \caption{Equation \ref{eq:skein2} interpreted as a skein relation}
\label{fig:skein2}
    \end{figure}

From this point of view, the decoration rules for arcs in these algebras are canonically determined
by the basis $\B$.

\section{The highest \tworoot}\label{sec:highest}

In Section \ref{sec:highest}, we assume that $\Gamma$ is a Dynkin diagram of
finite type $A_n$, $D_n$, $E_6$, $E_7$, or $E_8$ unless otherwise stated, and
we continue to work over a subfield of $\R$.

Recall that the root lattice $Q=\Z \simproots$ of arbitrary type is equipped
with a standard partial order: if $q_1, q_2 \in \Z \simproots$, we say that
$q_1 < q_2$ if $q_2 - q_1$ is a positive linear combination of simple roots
$\simproots$.  Also recall that the {\it height} of a positive root
$\alpha=\sum_{b \in \simproots}\lambda_b b$ is defined to be the number
$\rootht(\alpha):=\sum_{b\in \simproots} \lambda_b$.

The basis $\B$ allows us to define natural analogues of $\le$ and $\rootht$ for
\tworoots: for \tworoots $q_1, q_2 \in \Z \B$, we say that $q_1 \leq_2 q_2$ if
$q_2 - q_1$ is a positive linear combination of elements of $\B$.  If $\al
\vee \be$ is a positive \tworoot satisfying $\al \vee \be = \sum_{b \in \B}
\lambda_b b$, then we may define the {\it height} of $\al \vee \be$ to be the
number $\tworootht(\alpha\vee\beta):=\sum_{b \in \B} \lambda_b$.  Note that in
this context, Theorem \ref{thm:coherence2} implies that every real \tworoot is
comparable to the zero vector in the order $\leq_2$. The same theorem also
implies that the height of a positive root must be a positive integer.
Theorem \ref{thm:canbasrefl} shows that if $s_i$ is a generator for $W$, $b
\in \B$ is a canonical basis element, and $s_i(b)$ is not a scalar multiple of
$b$, then $b <_2 s_i(b)$ is a covering pair.

The main purpose of this section is to show that each $W$-orbit of \tworoots in
$\Z\mathcal{B}$ has a unique maximal element with respect to the order
$\le_2$. To this end, we first use $\le_2$ to induce a partial order on the
set of pairs of orthogonal positive roots of $W$, also denoted by $\le_2$,
defined by $$ \{\al_1, \be_1\} \leq_2 \{\al_2, \be_2\} \text{\rm \ if } (\al_1
\vee \be_1) \leq_2 (\al_2 \vee \be_2).$$ The new order is well-defined since
$\alpha\vee\beta=\beta\vee\alpha$ for all roots $\alpha,\beta$ of $W$.
Similarly, we may define the {\it height} of each pair of positive orthogonal
roots $\{\alpha,\beta\}$ to be $\tworootht(\alpha\vee\beta)$.

To establish the existence of maximal \tworoots in $\Z\mathcal{B}$, we shall
compare the order $\le_2$ on root pairs with two other partial orders on sets
of orthogonal roots introduced in \cite{cohen06}. Motivated by the
Lawrence--Krammer representation of the Artin group, Cohen, Gijsbers, and Wales
defined combinatorially in \cite{cohen06} a partial order $\le'$ on each
$W$-orbit of $k$-tuples of mutually orthogonal positive roots, where $W$ is a
simply laced finite Weyl group and $k\in \Z_{>0}$.  Here, the action of each
element $w\in W$ sends every $k$-tuple $\rho=\{\alpha_1,\dots,\alpha_k\}$ to
the set \[w(\ome)=\posroots\cap \{\pm\alpha_1,\dots,\pm\alpha_k\}\] where
$\posroots$ is the set of positive roots of $W$. The definition of the order
$\le'$ also requires the $k$-tuples to be ``admissible", a technical
combinatorial property that is always satisfied if $k=2$ (see
\cite[Proposition 2.3]{cohen06}). As a consequence, the order $\le'$ restricts
to pairs of orthogonal roots as follows.

\begin{defn}\label{def:cgworder} Let $W$ be a simply laced Weyl group of
finite type. Let $\ome_1$ and $\ome_2$ be two (unordered) pairs of
orthogonal positive roots of $W$ such that $w(\ome_1) = \ome_2$ for some
$w \in W$. We say that $\ome_1 <' \ome_2$ if there exist $\gamma_1 \in
\ome_1 \backslash \ome_2$ and $\gamma_2 \in \ome_2 \backslash \ome_1$,
of minimal height in $\ome_1 \backslash \ome_2$ and $\ome_2 \backslash
\ome_1$ respectively, such that $\rootht(\gamma_1) < \rootht(\gamma_2)$.
\end{defn}

\begin{prop}[Cohen, Gijsbers, Wales] \label{prop:monoidal} Let $W$ be a simply
laced Weyl group of finite type, and let $\trorbit$ be a $W$-orbit of
pairs of orthogonal positive roots. The relation $\leq'$ of Definition
\ref{def:cgworder} is a partial order on $\trorbit$.
\end{prop}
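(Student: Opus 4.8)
The plan is to check that $\leq'$ — understood as the reflexive closure of the strict relation $<'$ of Definition~\ref{def:cgworder} — is reflexive, antisymmetric, and transitive on $\trorbit$. Reflexivity holds by construction. For the other two properties it is convenient to reformulate $<'$ first: since $w(\ome_1)=\ome_2$ forces $|\ome_1|=|\ome_2|=2$, whenever $\ome_1\ne\ome_2$ the sets $\ome_1\setminus\ome_2$ and $\ome_2\setminus\ome_1$ are nonempty of equal cardinality, and a minimal-height element of either has height equal to the corresponding minimum. Thus $\ome_1<'\ome_2$ holds precisely when $\min\{\rootht(\gamma):\gamma\in\ome_1\setminus\ome_2\}<\min\{\rootht(\gamma):\gamma\in\ome_2\setminus\ome_1\}$, with no dependence on the chosen representatives.

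Antisymmetry is then immediate: if $\ome_1<'\ome_2$ and $\ome_2<'\ome_1$ both held, the reformulation would give two opposite strict inequalities between the same pair of minimal heights. Hence $<'$ is asymmetric; in particular it is irreflexive, and $\ome_1\leq'\ome_2\leq'\ome_1$ forces $\ome_1=\ome_2$.

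The real content is transitivity. Suppose $\ome_1<'\ome_2$ and $\ome_2<'\ome_3$; by asymmetry the three pairs are distinct (so in particular $\ome_1\ne\ome_3$), whence $|\ome_1\cap\ome_2|,|\ome_2\cap\ome_3|\in\{0,1\}$. I would argue by a case analysis on these two intersection sizes. In each case I would write the elements of $\ome_1,\ome_2,\ome_3$ explicitly, determine which roots lie in $\ome_1\setminus\ome_3$ and which in $\ome_3\setminus\ome_1$ by tracking their membership in $\ome_2$, and then deduce $\min\{\rootht(\gamma):\gamma\in\ome_1\setminus\ome_3\}<\min\{\rootht(\gamma):\gamma\in\ome_3\setminus\ome_1\}$ from the two hypotheses. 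The only arithmetic needed is that $\min(p,q)<\min(r,s)$ whenever $q<r$ and $p<s$, together with the fact that an inequality of the form $\min(\rootht(x),\rootht(y))<\min(\rootht(x),\rootht(z))$ forces $\rootht(y)<\rootht(x)$ and hence $\rootht(y)<\rootht(z)$. I expect the main obstacle to be the bookkeeping: the trickiest configurations are those in which $\ome_1\cap\ome_2=\emptyset$ or $\ome_2\cap\ome_3=\emptyset$ while $\ome_1\cap\ome_3\ne\emptyset$, since then a root of $\ome_1$ of larger height can become the \emph{unique} element of $\ome_1\setminus\ome_3$, and one must verify that the hypotheses still force the retained root of $\ome_1$ to have strictly smaller height than the retained root of $\ome_3$. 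Each individual case is routine, but some care is needed not to overlook an intersection pattern.

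A shorter alternative is to note that the statement is exactly the case $k=2$ of the result of \cite{cohen06}: every pair of orthogonal positive roots is admissible by \cite[Proposition~2.3]{cohen06}, and $\leq'$ is proved there to be a partial order on every $W$-orbit of admissible $k$-tuples, so its restriction to pairs is the desired partial order. I would nonetheless prefer to include the direct argument above, which for $k=2$ is brief and frees this proposition from the more intricate general admissibility theory.
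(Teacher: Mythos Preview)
Your proposal is correct. The paper's own proof is simply a citation of \cite[Proposition~3.1]{cohen06}, accompanied by the remark that the transitivity claimed there as ``readily verified'' is in fact not entirely trivial. Your ``shorter alternative'' at the end is therefore exactly what the paper does.

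Your main argument, by contrast, supplies what the paper explicitly flags as missing: a direct verification of transitivity in the $k=2$ case via a case analysis on $|\ome_1\cap\ome_2|$ and $|\ome_2\cap\ome_3|$, with attention to the possibility $\ome_1\cap\ome_3\ne\emptyset$. The reformulation of $<'$ as a comparison of minimal heights over the symmetric differences is the right simplification, and the arithmetic lemma you isolate (that $\min(\rootht(x),\rootht(y))<\min(\rootht(x),\rootht(z))$ forces $\rootht(y)<\rootht(z)$) handles precisely the delicate configurations you identify. So your approach is more self-contained than the paper's and actually closes the gap the authors acknowledge; the cost is a page or so of routine but careful bookkeeping that the paper chose to outsource.
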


\begin{proof} This follows from the proof of \cite[Proposition 3.1]{cohen06}.
(We note that the proof from \cite{cohen06} asserts that ``it is readily
verified" that $\leq'$ is an ordering. However, to our knowledge it is
not completely trivial to prove the fact that $\leq'$ is transitive.)
\end{proof}

The second partial order we need from \cite{cohen06} is defined using $\le'$
as follows.

\begin{defn}\label{def:monoidal} Let $W$ be a simply laced Weyl group of
finite type, and let $\trorbit$ be a $W$-orbit of pairs of orthogonal
positive roots. Let $\leq_m$ be the partial order on $\trorbit$ whose
covering relations are those of the form $\ome <_m s_i(\ome)$ where
$s_i$ is a Coxeter generator such that $s_i(\ome) \in \trorbit
\backslash \{\ome\}$ and $\ome <' s_i(\ome)$. We call $\leq_m$ the {\it
monoidal order} on $\trorbit$.
\end{defn}

\noindent Note that because $\leq'$ is a partial order, it follows that
$\leq_m$ is antisymmetric, and thus that the reflexive, transitive
extension of the relation in Definition \ref{def:monoidal} is a
partial order. It is immediate from the definitions that $\leq'$ is a
refinement of $\leq_m$.

The next result also applies in type $A$, by using the identifications of 
Corollary \ref{cor:tworootlat} (ii).

\begin{prop}\label{prop:refinement}
Let $W$ be a simply laced Weyl group of finite type, and let $\trorbit$ be a $W$-orbit of pairs of orthogonal
positive roots. 
\begin{itemize}
\item[\rm (i)]{If $\ome_1 = \{\al, \be\} \in \trorbit$ and $\ome_1 <_m \ome_2$ is a covering pair in $\trorbit$, then we have
$\ome_2 = \{s_i(\al), s_i(\be)\}$ for some simple reflection $s_i$. Furthermore, if 
$x = B(\al_i, \al)$ and $y = B(\al_i, \be)$, then we have $x, y \in \{-1, 0, 1\}$, and we do not have
$x = y = 0$.}
\item[\rm (ii)]{The partial order $\leq_2$ refines the monoidal order $\leq_m$; in other words, 
if $\ome_1, \ome_2 \in \trorbit$ satisfy $\ome_1 \leq_m \ome_2$, then we have $\ome_1 \leq_2 \ome_2$.}
\end{itemize}
\end{prop}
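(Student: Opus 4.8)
The plan is to obtain (i) straight from the definitions and then to deduce (ii) by checking it on the covering relations of $\le_m$ and chaining, using that $\le_2$ is transitive together with Theorem \ref{thm:coherence2}. In type $A_n$ I would first realize $W$ as a standard parabolic subgroup of a Weyl group of type $\ypqr$ (for instance, $A_n$ is a standard parabolic of $D_{n+1}$), so that by Corollary \ref{cor:tworootlat}(ii) the relevant $W$-action on 2-roots is a restriction of the one in type $\ypqr$; I will not dwell on this, and henceforth assume $W$ is of type $\ypqr$ and write $\ome_1 = \{\al, \be\}$.

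For part (i): since $\ome_1 <_m \ome_2$ is a covering pair, Definition \ref{def:monoidal} gives $\ome_2 = s_i(\ome_1) \ne \ome_1$ for some Coxeter generator $s_i$. A simple reflection $s_i$ turns a positive root negative only when that root is $\al_i$; if $\al$ were equal to $\al_i$, then $\al_i \perp \be$ would force $s_i$ to fix $\be$ and hence $\ome_2 = \ome_1$. So neither component equals $\al_i$, both $s_i(\al)$ and $s_i(\be)$ remain positive, and $\ome_2 = \{s_i(\al), s_i(\be)\}$. Writing $x = B(\al_i,\al)$ and $y = B(\al_i,\be)$, these are integers, and since $B$ is positive definite, Cauchy--Schwarz gives $x^2 \le B(\al_i,\al_i)B(\al,\al) = 4$ with equality only when $\al = \pm\al_i$; as $\al$ is positive and $\al \ne \al_i$, we conclude $x \in \{-1,0,1\}$, and likewise $y \in \{-1,0,1\}$. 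If $x=y=0$ then $s_i$ fixes both $\al$ and $\be$ and $\ome_2 = \ome_1$, so $(x,y) \ne (0,0)$.

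For part (ii): the relation $\le_2$ is a partial order (antisymmetry follows from the linear independence of $\B$, Theorem \ref{canbas}) and $\le_m$ is generated by its covering relations, so it suffices to show $\ome_1 \le_2 s_i(\ome_1)$ whenever $\ome_1 <_m s_i(\ome_1)$ is a covering relation, and then chain. Put $\ome_2 = s_i(\ome_1) = \{s_i(\al), s_i(\be)\}$; by Definition \ref{def:monoidal} and Definition \ref{def:cgworder} the covering relation carries the hypothesis $\ome_1 <' \ome_2$, i.e.\ the minimal root-height occurring in $\ome_1 \setminus \ome_2$ is strictly less than the minimal root-height occurring in $\ome_2 \setminus \ome_1$. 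By part (i) and Proposition \ref{prop:generalrefl}(i), $s_i(\al \vee \be) - (\al \vee \be) = \al_i \vee v$ with $v = xy\,\al_i - x\be - y\al$, and by Proposition \ref{prop:generalrefl}(iii) (at least one of $x, y$ is $\pm 1$) the vector $v$ is a real root, hence either a positive root or a negative root. The crucial claim is that the hypothesis $\ome_1 <' \ome_2$ forces $v$ to be a positive root; granting this, $\al_i \vee v$ is a positive real 2-root, Theorem \ref{thm:coherence2} expresses it as a nonnegative integral combination of $\B$, and so $s_i(\al \vee \be) - (\al \vee \be) \ge_2 0$, i.e.\ $\ome_1 \le_2 \ome_2$.

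The crucial claim I would prove by a short case analysis on $(x,y) \in \{-1,0,1\}^2 \setminus \{(0,0)\}$, using $\rootht(s_i(\mu)) = \rootht(\mu) - B(\al_i,\mu)$ and the observation that, since $s_i(\al), s_i(\be) \notin \{\al,\be\}$ once $\ome_2 \ne \ome_1$, the sets $\ome_1 \setminus \ome_2$ and $\ome_2 \setminus \ome_1$ each have one element when exactly one of $x, y$ vanishes and two elements otherwise. If $x+y>0$ then the minimal-height element of $\ome_2 \setminus \ome_1$ is strictly shorter than that of $\ome_1 \setminus \ome_2$, contradicting $\ome_1 <' \ome_2$, so this case does not occur. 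If $x+y<0$, i.e.\ $(x,y) \in \{(-1,0),(0,-1),(-1,-1)\}$, then $v$ equals $\be$, $\al$, or $\al+\al_i+\be$ respectively, a sum of positive roots and hence (being a root) a positive root. The remaining case is $\{x,y\} = \{1,-1\}$; taking $x=1$, $y=-1$ (the other subcase is symmetric under interchanging $\al$ and $\be$), one has $v = \al - \al_i - \be$, and a direct comparison shows $\ome_1 <' \ome_2$ holds exactly when $\rootht(\al) \ge \rootht(\be)+2$, whence $\rootht(v) \ge 1$ and $v$ is a positive root. I expect this last case to be the main obstacle: in the others $v$ is manifestly positive or the case is vacuous, whereas here one genuinely has to extract the height bound $\rootht(\al) \ge \rootht(\be)+2$ from the $<'$-comparison before concluding.
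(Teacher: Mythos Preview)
Your proof is correct and follows essentially the same route as the paper's: reduce (ii) to covering pairs, use Proposition~\ref{prop:generalrefl} to write $s_i(\al\vee\be)-(\al\vee\be)=\al_i\vee v$ with $v$ a real root, perform a case analysis on $(x,y)$ to show $v>0$ under the hypothesis $\ome_1<'\ome_2$, and conclude via Theorem~\ref{thm:coherence2}. The only difference is cosmetic: you organize the cases by the sign of $x+y$ and treat $\{x,y\}=\{1,-1\}$ separately, whereas the paper first normalizes to $x\in\{\pm1\}$ and splits according to $y\in\{0,x,-x\}$; the computations and the use of the height comparison in the mixed-sign case are the same.
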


\begin{proof}
To prove (i), we note that $\ome_1=\{\alpha,\beta\} \subseteq \posroots$.  Then by Definition
\ref{def:monoidal} we must have $$ \ome_1<'\ome_2=s_i(\ome_1)=\posroots\cap \{\pm s_i(\alpha),\pm
s_i(\beta)\} $$ for some Coxeter generator $s_i$.  Let  $\alpha_i\in \Pi$ be the simple root
corresponding to $s_i$, and let $x=B(\alpha_i,\alpha)$, and let $y=B(\alpha_i,\beta)$ as in the
statement. Note that if $\alpha_i=\alpha$ then we have $s_i(\alpha)=-\alpha,s_i(\beta)=\beta$ and
$\ome_2=\{\alpha,\beta\}$.  This contradicts the fact that $\ome_1\neq \ome_2$, which proves that
$\alpha_i$ and $\alpha$ are distinct real roots.  Since $s_i$ permutes the set
$\posroots\setminus\{\alpha_i\}$, it follows that  $s_i(\alpha)\in \posroots$, A similar argument
shows that $\beta\neq \alpha_i$ and $s_i(\beta)\in \Phi_+$, so it follows that
$\rho_2=\{s_2(\alpha),s_i(\beta)\}$.

To prove the claims about $x$ and $y$, note first $\alpha\neq -\al_i$ since both $\al$ and
$\alpha_i$ are positive roots.  Also note that since $W$ is simply laced, the roots $\alpha_i$ and
$\alpha$ have the same norm in the sense that we must have
$B(\alpha_i,\alpha_i)=B(\alpha,\alpha)=2$. Since $\alpha_i$ and $\alpha$ are distinct, are not
opposite, and have the same norm, it then follows from \cite[\S 9.4]{Humphreys78} that
$x=B(\alpha_i,\alpha)\in \{-1,0,1\}$.  Similarly, we have $y\in\{-1,0,1\}$.  Moreover, since
$\ome_2\neq \ome_1$, the root $\alpha_i$ cannot be orthogonal to both $\alpha$ and $\beta$, so we
cannot have $x=y=0$. This completes the proof of (i).

It is enough to prove (ii) in the case where $\ome_1<_m \ome_2$ forms a covering pair. By the
previous paragraph, we may assume without loss of generality that $x\in \{-1,1\}$, and that the
\tworoot corresponding to $\ome_2$ is \[ s_i(\alpha)\vee s_i(\beta)=s_i(\alpha\vee\beta)=
    (\alpha\vee\beta)+s_\alpha s_{i}(\alpha\vee\beta) = \alpha\vee\beta+s_\alpha s_i(\alpha)\vee
s_\alpha s_i(\beta), \] where the second equality holds by Proposition \ref{prop:generalrefl} (ii)
since $x \in \{-1,1\}$. Every positive \tworoot is a linear combination of elements of $\B$ with
positive integral coefficients by Theorem \ref{thm:coherence2}, so to prove $\ome_1\le_2\ome_2$ it
now suffices to show that $s_\alpha s_i(\alpha)\vee s_\alpha s_i(\beta)$ equals a positive root.  We
do so by showing that $s_\alpha s_i(\alpha)$ and $s_\alpha s_i(\beta)$ are either both positive or
both negative roots, depending on the values of $x$ and $y$. 

Suppose first that $y = 0$. In this case, both $\al_i$ and $\al$ are orthogonal to $\be$, which
implies that $s_\al s_i(\be) = \be$ is positive. By assumption, we have $$ \{\al, \be\} <_m
\{s_i(\al), s_i(\be)\} = \{\al - x\al_i, \be\} ,$$ which implies (using Definition
\ref{def:cgworder}) that we have $x = -1$. In turn, this implies that $s_\al s_i(\al) = \al_i$ and
$s_\al s_i(\be) = \be$ are both positive roots, which completes the proof in this case.

Next, suppose that $y = x = \pm 1$. In this case, the condition that $\{\al, \be\} <_m \{s_i(\al),
s_i(\be)\}$ implies that $y=x=-1$.  This implies that $s_\al s_i(\al) = \al_i$ and $s_\al s_i(\be) =
\al + \be + \al_i$ are both positive, as required.

Finally, suppose that $y = -x = \pm 1$. We cannot have $\rootht(\al) = \rootht(\be)$, because one of
$s_i(\al)$ and $s_i(\be)$ would have a lower height than both of $\al$ and $\be$, which is
incompatible with the condition that $\{\al, \be\} <_m \{s_i(\al), s_i(\be)\}$. We may therefore
assume without loss of generality that $\rootht(\al) < \rootht(\be)$. The condition $\{\al, \be\}
<_m \{s_i(\al), s_i(\be)\}$ then implies that $x = -1$ and $y = 1$. This implies that $s_\al
s_i(\al) = \al_i$, a positive root, and $s_\al s_i(\be) = \be - \al - \al_i$. Because $\be - \al -
\al_i$ is a root, it cannot have height zero, so we must have $\rootht(\be) \geq \rootht(\al) + 2$
and thus that $\rootht(\be - \al - \al_i) > 0$. It follows that $\be - \al - \al_i$ is a positive
root, as required, which completes the proof of (ii).
\end{proof}

The next example shows that the partial order $\leq_2$ strictly refines the order $\leq_m$.

\begin{exa}\label{exa:refinement}
Let $\Gamma$ be a Dynkin diagram of type $D_5$, with vertices numbered $1$--$2$--$3$--$4$ and $3$--$5$,
so that $3$ is the branch point. The pair of positive orthogonal roots $\{\al_3, \theta_1\}$ 
is not minimal in $\leq_2$, because we have $$
\al_3 \vee \theta_1 = (\al_3 \vee \al_1) + (\al_3 \vee \eta_{2,4}) + (\al_3 \vee \eta_{2,5})
.$$ However, $\{\al_3, \theta_1\}$ is minimal in the order $\leq_m$, because there is no simple reflection
$s_i$ for which $s_i(\{\al_3, \theta_1\}) <_m \{\al_3, \theta_1\}$. 
\end{exa}

\begin{theorem}\label{thm:highest}
Let $W$ be a simply laced Weyl group of finite type, and let $\trorbit$ be a $W$-orbit of pairs of orthogonal
positive roots. 
The orbit contains a maximum element $\{\alpha,\beta\}$ with
respect to $\le_2$. In particular, if $\alpha\vee\beta=\sum_{b\in \B}\mu_b b$
and $\alpha' \vee \beta'=\sum_{b\in \B}\lambda_b b$ for some other element
$\{\alpha', \beta'\}$ in the orbit, then we have $\lambda_b\le \mu_b$ for all
$b\in \B$.
\end{theorem}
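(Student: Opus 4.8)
The plan is to exhibit the maximum as an element of maximal \tworoot{} height and then show that it $\leq_2$-dominates everything else; the ``in particular'' clause is then automatic from the definition of $\leq_2$. Since $W$ is finite, the orbit $\trorbit$ is a finite set, and by Theorem \ref{thm:coherence2} the function $\tworootht$ takes positive integer values on $\trorbit$. So I would fix $\ome^{*} = \{\al^{*},\be^{*}\} \in \trorbit$ with $\tworootht(\ome^{*})$ maximal, and set $m = \tworootht(\ome^{*})$; the goal is to prove that $\ome \leq_2 \ome^{*}$ for every $\ome \in \trorbit$.

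The first ingredient is a local statement about simple reflections. Using Proposition \ref{prop:generalrefl} (i) and (iii): for any $\ome = \{\al,\be\} \in \trorbit$ and any simple root $\al_i$, either $s_i$ fixes $\ome$ (this happens exactly when $\al_i \in \{\al,\be\}$, or when $\al_i$ is orthogonal to both $\al$ and $\be$), or $s_i(\al \vee \be) = (\al \vee \be) + (\al_i \vee v)$ with $v$ a real root orthogonal to $\al_i$, so that $\al_i \vee v$ is $\pm$ a positive real \tworoot. In the latter case Theorem \ref{thm:coherence2} shows $\al_i \vee v$ is a nonzero integer combination of $\B$ with coefficients of one sign, so $s_i(\ome)$ is $\leq_2$-comparable to $\ome$ and $|\tworootht(s_i\ome) - \tworootht(\ome)| \geq 1$. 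In particular $\ome^{*}$ admits no ``raising'' simple reflection, and if $\eta \in \trorbit$ satisfied $\eta >_2 \ome^{*}$ we would get $\tworootht(\eta) > m$, a contradiction; hence $\ome^{*}$ is $\leq_2$-maximal in $\trorbit$.

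The crux is then to show that $\ome^{*}$ is the \emph{unique} $\leq_2$-maximal element, after which the standard fact that a finite poset with a unique maximal element has a greatest element finishes the proof. For this I would prove: for every $\ome \in \trorbit$ with $\tworootht(\ome) < m$ there is a simple reflection $s_i$ with $\ome <_2 s_i(\ome)$. Granting this, an induction on $m - \tworootht(\ome)$ shows every $\ome$ lies $\leq_2$ below some element of maximal height, and the same dichotomy shows a $\leq_2$-maximal element cannot have height $< m$; combined with a proof that the maximal-height element is unique, this identifies all $\leq_2$-maximal elements with $\ome^{*}$. To establish the existence of the raising reflection (and the uniqueness) I would run the case analysis on the pair $\bigl(B(\al_i,\al), B(\al_i,\be)\bigr)$ together with the comparison of $\rootht(\al)$ and $\rootht(\be)$, exactly as it governs the covering relations of the monoidal order: by Proposition \ref{prop:refinement} (ii) one knows $\leq_2$ refines $\leq_m$, the sign of the root $v$ in Proposition \ref{prop:generalrefl} (iii) matches the height comparison in Definition \ref{def:cgworder}, and a pair admitting no $\leq_2$-raising reflection is forced into a ``doubly dominant'' configuration; one then argues, as in the classical proof that a dominant root is the highest root, that $\trorbit$ contains exactly one such pair (alternatively, this last point can be deduced from the corresponding uniqueness statement for the monoidal order in \cite{cohen06}).

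The main obstacle is precisely this last step: ruling out spurious local maxima of $\tworootht$ under simple-reflection moves, i.e.\ proving that the $\leq_2$-maximal element of $\trorbit$ is unique. I expect this to require the careful dominance/case analysis just sketched rather than a soft argument; everything preceding it is bookkeeping with Theorem \ref{thm:coherence2} and Proposition \ref{prop:generalrefl}.
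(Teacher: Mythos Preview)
Your proposal contains the right ingredients but assembles them in an unnecessarily complicated way, and the direct ``raising'' argument you outline has exactly the gap you yourself identify: you never establish that a $\leq_2$-local-maximum under simple reflections must be unique, and this is not a detail that falls out of bookkeeping.

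The paper's proof bypasses all of this. It simply quotes \cite[Corollary~3.6]{cohen06} to get that $\trorbit$ has a unique $\leq_m$-maximal element, hence (by finiteness) a $\leq_m$-maximum $\ome^{*}$. Then Proposition~\ref{prop:refinement}\,(ii), which says that $\ome_1 \leq_m \ome_2$ implies $\ome_1 \leq_2 \ome_2$, applied to the single inequality $\ome \leq_m \ome^{*}$ for each $\ome \in \trorbit$, gives $\ome \leq_2 \ome^{*}$ for all $\ome$. That is the entire argument.

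You actually cite both of these ingredients in your last paragraph, but only as a fallback for one sub-step of your longer plan. The point is that once you know $\leq_2$ refines $\leq_m$, a $\leq_m$-maximum is \emph{automatically} a $\leq_2$-maximum, so there is no need to analyze raising reflections, compare $\tworootht$ with the height of a putative maximum, or rule out spurious local maxima. Your height function $\tworootht$ and the local dichotomy for simple reflections are correct observations, but they are not needed here.
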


\begin{proof}
The orbit $\trorbit$ has a unique maximal element with respect to $\leq_m$ by \cite[Corollary 3.6]{cohen06},
and this is equivalent to having a maximum element because $\trorbit$ is finite. The result now follows from
Proposition \ref{prop:refinement} (ii).
\end{proof}

Recall from Proposition \ref{prop:fibres} and Remark \ref{rmk:thatremark} that the
$W$-orbits of pairs of orthogonal real roots and the $W$-orbits of real
\tworoots can be identified under the correspondence $\{\al', \be'\} \leftrightarrow
\al' \vee \be'$. Theorem \ref{thm:highest} may be interpreted as saying that the
identification matches the maximum elements of these orbits. 

In the sequel, we will refer to the \tworoot $\alpha\vee\beta$ from Theorem \ref{thm:highest} the {\it highest
\tworoot} in its $W$-orbit. Our next goal is to give an explicit description of the highest \tworoot in
each orbit, and the next result will be helpful for this purpose.

\begin{lemma}\label{lem:findhighest}
Let $W$ be a simply laced Weyl group of finite type, and let $\{\al, \be\}$ be a pair of orthogonal positive
roots of $W$ satisfying $\rootht(\al) = \rootht(\be)$. Suppose that every simple root $\al_i$ satisfies the 
following two conditions:
\begin{itemize}
    \item[\text{\rm (i)}]{if $B(\al_i, \al) = -1$ then $B(\al_i, \be) = +1$;}
    \item[\text{\rm (ii)}]{if $B(\al_i, \be) = -1$ then $B(\al_i, \al) = +1$.}
\end{itemize}
Then $\al \vee \be$ is the highest \tworoot in its orbit.
\end{lemma}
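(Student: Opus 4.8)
The plan is to show that the hypotheses of the lemma force $\{\al,\be\}$ to be a local maximum for the monoidal order $\le_m$, and then invoke the fact (already established via Theorem~\ref{thm:highest} and \cite[Corollary 3.6]{cohen06}) that a $W$-orbit of orthogonal root pairs has a \emph{unique} maximal element with respect to $\le_m$, which must therefore coincide with $\{\al,\be\}$. Since $\le_2$ refines $\le_m$ by Proposition~\ref{prop:refinement}~(ii), the maximum element for $\le_m$ is also the maximum for $\le_2$, so $\al\vee\be$ is the highest \tworoot in its orbit.

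Concretely, first I would fix a Coxeter generator $s_i$ with $s_i(\{\al,\be\})\ne\{\al,\be\}$ and show that $s_i(\{\al,\be\}) <_m \{\al,\be\}$ rather than the reverse. Writing $x=B(\al_i,\al)$ and $y=B(\al_i,\be)$, Proposition~\ref{prop:refinement}~(i) tells us $x,y\in\{-1,0,1\}$ and $(x,y)\ne(0,0)$; since $s_i$ permutes $\posroots\setminus\{\al_i\}$ and $\al_i\notin\{\al,\be\}$ (both $\al,\be$ are positive, and if say $\al_i=\al$ then $s_i$ would fix the pair), we have $s_i(\{\al,\be\})=\{\al-x\al_i,\ \be-y\al_i\}$ with both entries positive. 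By the hypotheses, the only possibilities are $(x,y)\in\{(1,0),(0,1),(1,1),(1,-1),(-1,1)\}$ — the cases $(x,y)\in\{(-1,0),(0,-1),(-1,-1)\}$ being excluded by (i)/(ii). I would then go through each of these five cases and check, using Definition~\ref{def:cgworder} together with $\rootht(\al)=\rootht(\be)$, that $s_i(\{\al,\be\}) <' \{\al,\be\}$: in the cases $(1,0)$, $(0,1)$, $(1,1)$ one entry drops in height and the other is unchanged, so the minimal-height element of the symmetric difference on the $s_i$-side has strictly smaller height; in the cases $(1,-1)$ and $(-1,1)$, one entry of the pair decreases by $\al_i$ and the other increases by $\al_i$, and the equal-height hypothesis guarantees that the decreased entry is the unique one of minimal height in the symmetric difference and has strictly smaller height than its counterpart on the $\{\al,\be\}$-side. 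In all cases $s_i(\{\al,\be\})<_m\{\al,\be\}$, so no generator moves $\{\al,\be\}$ upward in $\le_m$; since covering relations in $\le_m$ are exactly of the form $\omega <_m s_i(\omega)$, $\{\al,\be\}$ is maximal in $\le_m$.

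The main obstacle I anticipate is the bookkeeping in the mixed-sign cases $(x,y)=(1,-1)$ and $(-1,1)$: one must be careful that $\al-x\al_i$ and $\be-y\al_i$ really are positive roots (this is where $s_i$ permuting $\posroots\setminus\{\al_i\}$ is used) and that the height comparison in Definition~\ref{def:cgworder} is being applied to the correct element of minimal height in each part of the symmetric difference. The hypothesis $\rootht(\al)=\rootht(\be)$ is exactly what is needed to pin this down: after applying $s_i$ the two heights differ, and the side with the smaller minimal height is the image side, giving $s_i(\{\al,\be\})<'\{\al,\be\}$. Once maximality in $\le_m$ is established, the conclusion is immediate from the uniqueness of the $\le_m$-maximum in a finite orbit (as in the proof of Theorem~\ref{thm:highest}) and from Proposition~\ref{prop:refinement}~(ii), so $\al\vee\be$ is the highest \tworoot in its $W$-orbit.
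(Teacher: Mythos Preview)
Your proposal is correct and follows essentially the same strategy as the paper: both argue that no simple reflection $s_i$ can move $\{\al,\be\}$ upward in the monoidal order $\le_m$, via a case analysis on $(x,y)=(B(\al_i,\al),B(\al_i,\be))$, with hypotheses (i)--(ii) ruling out the cases $(-1,0),(0,-1),(-1,-1)$ and the equal-height assumption handling the mixed-sign cases; both then conclude by the uniqueness of the $\le_m$-maximum. The only minor tactical difference is that the paper phrases the argument as a proof by contradiction and, in each surviving case, computes $s_i(\al\vee\be)-(\al\vee\be)$ via Proposition~\ref{prop:generalrefl} to show $s_i(\al\vee\be)<_2 \al\vee\be$ (implicitly invoking sign-coherence in the $(+1,+1)$ case), whereas you work directly with Definition~\ref{def:cgworder} and compare heights of roots to obtain $s_i(\{\al,\be\})<'\{\al,\be\}$; your route is slightly more elementary in that it avoids appealing to the $\le_2$ machinery.
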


\begin{proof}
Suppose that the conditions are satisfied, but that $\{\al, \be\}$ is not the highest \tworoot with respect
to $\leq_m$. By Proposition \ref{prop:refinement} (i), there must be a simple reflection $s_i$ such that
$\al \vee \be \leq_m s_i(\al \vee \be)$ is a covering relation. Proposition \ref{prop:refinement} (i) also implies 
that $\{B(\al_i, \al), B(\al_i, \be)\} \subseteq \{-1, 0, 1\}$, and that $\al_i$ cannot be orthogonal to 
both $\al$ and $\be$.

Suppose for a contradiction that we have such an $s_i$.
We claim that in fact neither of $B(\al_i, \al)$ and $B(\al_i, \be)$ can be zero, because in the case that (say)
$B(\al_i, \be) = 0$, Proposition \ref{prop:generalrefl} (i) implies that $$
s_i(\al \vee \be) = (\al \vee \be) - B(\al_i, \al)(\al_i \vee \be)
.$$ Since $s_i(\al \vee \be)$ is assumed to be a higher root than $\al \vee \be$, we must have 
$B(\al_i, \al) = -1$. However, we also have $B(\al_i, \be) = 0$, which contradicts condition (i) of the statement.

We may now assume that $B(\al_i, \al) = \pm 1$ and that $B(\al_i, \be) = \pm 1$, with the signs chosen 
independently. By conditions (i) and (ii), the case $B(\al_i, \al) = B(\al_i, \be) = -1$ never occurs, so there 
are three other cases to consider.

The first case is $B(\al_i, \al) = -1$ and $B(\al_i, \be) = +1$. In this case, Proposition \ref{prop:generalrefl} (i)
implies that $$
s_i(\al \vee \be) 
= (\al \vee \be) + (\al_i \vee (\be - \al - \al_i))
= (\al + \al_i) \vee (\be - \al_i) 
,$$ and Proposition \ref{prop:generalrefl} (iii) implies that $\be - \al - \al_i$ is a root.
The assumption that $\rootht(\al) = \rootht(\be)$ shows that $\be - \al - \al_i$ is a negative simple root,
so that $s_i(\al \vee \be) <_2 (\al \vee \be)$, a contradiction.

The second case, where $B(\al_i, \al) = +1$ and $B(\al_i, \be) = -1$, follows by a symmetrical argument exchanging
the roles of $\al$ and $\be$.

Finally, suppose that we have $B(\al_i, \al) = B(\al_i, \be) = +1$. In this case, Proposition \ref{prop:generalrefl} (i)
implies that $$
s_i(\al \vee \be) 
= (\al \vee \be) + (\al_i \vee (\al_i - \al - \be))
= (\al - \al_i) \vee (\be - \al_i) 
.$$ Proposition \ref{prop:generalrefl} (iii) then implies that $\al_i - \al - \be$ is a root, and this root must
be negative because $\rootht(\al_i) = 1$, so that
$s_i(\al \vee \be) <_2 (\al \vee \be)$. This contradiction completes the proof.
\end{proof}

In order to state the main result about highest \tworoots, it is convenient to fix some notation for the simple reflections
in type $E_n$. (We maintain the conventions of Notation \ref{not:aandd} for types $A_n$ and $D_n$.)  We number the nodes of 
the Dynkin diagram of type $E_n$ so that $3$ is the branch node, 
$1$---$2$---$3$---$\, \cdots\, $---$(n-1)$ is a path, and the last node, $x$ is adjacent to $3$.

With these conventions, the highest root $\theta$ in type $E_6$, $E_7$, and $E_8$ is given by $$
\al_1 + 2 \al_2 + 3 \al_3 + 2 \al _4 + \al_5 + 2 \al_x
,$$ $$
2 \al_1 + 3 \al_2 + 4 \al_3 + 3 \al _4 + 2 \al_5 + \al_6 + 2 \al_x
,$$ and $$
2 \al_1 + 4 \al_2 + 6 \al_3 + 5 \al _4 + 4 \al_5 + 3 \al_6 + 2 \al_7 + 3 \al_x
,$$ respectively.

In  type $D_n$ $(n \geq 4)$, $E_6$, $E_7$ and $E_8$, there is a unique simple root, $\al_y$, that is not orthogonal to the 
highest root.  We have $\al_y = \al_2$ in type $D_n$ for all $n \geq 4$, and $\al_y = \al_x$, $\al_1$, and $\al_7$ in types
$E_6$, $E_7$, and $E_8$ respectively.

If $b$ and $c$ are nodes of the Dynkin diagram, we write $\al_{b,c}$ to mean $\sum_{i \in P} \al_i$, where $P$ is the 
set of vertices on the unique path between $b$ and $c$, counting both endpoints.

\begin{theorem}\label{thm:highestlist}
Let $W$ be a simply laced Weyl group of finite type, let $\theta$ be the highest root of $W$, and maintain the above 
notation and Notation \ref{not:aandd}. The highest \tworoot in each $W$-orbit is given as follows.
\begin{itemize}
\item[\text{\rm (i)}]{If $W$ has type $A_n$ where $n>2$, then the highest \tworoot is $$
        \al_{1,n-1} \vee \al_{2,n} = (\ep_1 - \ep_{n}) \vee (\ep_2 - \ep_{n+1})
.$$}
\item[\text{\rm (ii)}]{If $W$ has type $D_4$, then the highest \tworoots in each of the three orbits are \begin{align*}
(\theta - \al_{2,4}) \vee (\theta - \al_{2,3}) &= \al_{1,3} \vee \al_{1,4} = (\ep_1 - \ep_4) \vee (\ep_1 + \ep_4)
= (\ep_1 \vee \ep_1) - (\ep_4 \vee \ep_4), \\
(\theta - \al_{2,4}) \vee (\theta - \al_{1,2}) &= \al_{1,3} \vee \al_{3,4} = (\ep_1 - \ep_4) \vee (\ep_2 + \ep_3), \ and \  \\
(\theta - \al_{2,3}) \vee (\theta - \al_{1,2}) &= \al_{1,4} \vee \al_{3,4} = (\ep_1 + \ep_4) \vee (\ep_2 + \ep_3).
\end{align*}}
\item[\text{\rm (iii)}]{If $W$ has type $D_n$ for $n \geq 5$, then the highest \tworoot in the small orbit 
is $$\al_{1,n-1} \vee \al_{1,n} = (\ep_1 - \ep_n) \vee (\ep_1 + \ep_n) = (\ep_1 \vee \ep_1) - (\ep_n \vee \ep_n).$$}
\item[\text{\rm (iv)}]{If $W$ has type $D_n$ for $n \geq 5$, then the highest \tworoot in the large orbit is $$
(\theta - \al_{1,2}) \vee (\theta - \al_{2,3}) = (\theta - \al_1 - \al_2)  \vee (\theta - \al_2 - \al_3)
= (\ep_2 + \ep_3) \vee (\ep_1 + \ep_4)
.$$}
\item[\text{\rm (v)}]{If $W$ has type $E_6$, then the highest \tworoot is 
$(\theta - \al_{2,x}) \vee (\theta - \al_{4,x})$ =
$(\theta - \al_{2,y}) \vee (\theta - \al_{4,y}).$}
\item[\text{\rm (vi)}]{If $W$ has type $E_7$, then the highest \tworoot is 
$(\theta - \al_{x,1}) \vee (\theta - \al_{4,1})$ =
$(\theta - \al_{x,y}) \vee (\theta - \al_{4,y}).$}
\item[\text{\rm (vii)}]{If $W$ has type $E_8$, then the highest \tworoot is 
$(\theta - \al_{2,7}) \vee (\theta - \al_{x,7})$ =
$(\theta - \al_{2,y}) \vee (\theta - \al_{x,y}).$}
\end{itemize}
\end{theorem}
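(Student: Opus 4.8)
By Theorem~\ref{thm:highest} each orbit already has a unique maximum under $\le_2$; the remaining task is to identify it, and the plan is to derive every case from Lemma~\ref{lem:findhighest}. Fix a finite simply laced type and a $W$-orbit $\trorbit$ of pairs of orthogonal positive roots. If we can exhibit a pair $\{\al,\be\}\in\trorbit$ with $\rootht(\al)=\rootht(\be)$ such that, for every simple root $\al_i$, $B(\al_i,\al)=-1$ forces $B(\al_i,\be)=+1$ and $B(\al_i,\be)=-1$ forces $B(\al_i,\al)=+1$, then Lemma~\ref{lem:findhighest} certifies that $\al\vee\be$ is the highest \tworoot of $\trorbit$. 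So in each case the work is: (a) check that the two listed components are positive roots; (b) check that they are orthogonal and that the pair lies in the asserted orbit $\trorbit$; (c) check that the two heights agree; and (d) run the two sign tests against the simple roots. Step (b) is only an issue in types $D_4$ and $D_n$ ($n\ge 5$), where there is more than one orbit; in types $A_n$, $E_6$, $E_7$, $E_8$ there is a single orbit of positive \tworoots by Proposition~\ref{prop:explicitorbits}, so (b) reduces to checking that $\al$ and $\be$ are orthogonal positive roots.

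For types $A_n$ and $D_n$ I would work in the $\ep$-coordinates of Notation~\ref{not:aandd}. In type $A_n$ the two listed components have disjoint supports, so they are orthogonal, and they are ``interval'' roots of the same length, so their heights agree; a simple root $\al_i=\ep_i-\ep_{i+1}$ pairs to $-1$ with such a component only at the index just to the left of that component's support, and at that index one sees immediately that it pairs to $+1$ with the other component, so the sign tests hold. In type $D_n$ with $n\ge 5$, Proposition~\ref{prop:explicitorbits}~(iii) identifies the two orbits: the small-orbit candidate has both components supported on $\{1,n\}$, so it lies in $\trorbit_1$, while the large-orbit candidate $(\ep_2+\ep_3)\vee(\ep_1+\ep_4)$ has disjoint supports, so it lies in $\trorbit_2=\posroots^2\setminus\trorbit_1$; in both cases the height equality and the sign tests are verified directly from the explicit simple roots $\al_i=\ep_i-\ep_{i+1}$ ($i<n$) and $\al_n=\ep_{n-1}+\ep_n$. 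Type $D_4$ has three orbits; here I would first rewrite each of the three listed candidates in terms of $\B$ and match it against the three sets of Proposition~\ref{prop:explicitorbits}~(ii), thereby confirming that the three candidates represent the three distinct orbits, and then verify the hypotheses of Lemma~\ref{lem:findhighest} for one of them and transport the conclusion to the other two by the graph automorphisms of $D_4$.

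For types $E_6$, $E_7$, $E_8$ I would use the explicit coordinates of the highest root $\theta$ recorded immediately before the theorem. First I would confirm that $\theta-\al_{b,c}$ is a positive root for each relevant path: subtracting the simple-root coefficient vector of $\al_{b,c}$ from that of $\theta$ yields a vector that one recognizes as a positive root (equivalently, $\theta-\al_{b,c}$ is obtained from $\theta$ by a short sequence of reflections along the path). In each of the three types the two paths appearing in the answer have the same number of nodes, so $\al_{b,c}$ and $\al_{b',c'}$ have equal height and hence so do the two components $\theta-\al_{b,c}$ and $\theta-\al_{b',c'}$; orthogonality of the two components is then a short coordinate computation. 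What remains is the finite check of conditions~(i) and~(ii) of Lemma~\ref{lem:findhighest}: for each of the at most eight simple roots $\al_i$ one computes $B(\al_i,\theta-\al_{b,c})$ and $B(\al_i,\theta-\al_{b',c'})$ from the coordinates and confirms the required sign pattern. The alternative descriptions involving $\al_y$ are then immediate, since $\al_y=\al_x$ in $E_6$, $\al_y=\al_1$ in $E_7$, and $\al_y=\al_7$ in $E_8$, so the $y$-indexed paths coincide with the explicitly indexed ones.

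The main obstacle is the last step in the $E$ cases: the sign tests amount to a careful but unenlightening computation with the coordinate vectors of $\theta$ and the paths $\al_{b,c}$, where the chief danger is a sign slip that would break condition~(i) or~(ii); and in type $D_4$ one must correctly match the three candidates to the three triality orbits via Proposition~\ref{prop:explicitorbits}~(ii). Everything else --- positivity of the components, orthogonality, equality of heights, and the crucial passage from ``passes the two tests'' to ``is the orbit maximum'' --- is either immediate from the coordinates or supplied by Lemma~\ref{lem:findhighest}.
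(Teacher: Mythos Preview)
Your proposal is correct and follows essentially the same approach as the paper: apply Lemma~\ref{lem:findhighest} to each candidate pair, verifying in turn that the components are orthogonal positive roots of equal height lying in the correct orbit, and then checking the two sign conditions against every simple root. The only minor differences are cosmetic: the paper carries out the $E_8$ check in detail using the observation that $B(\theta,\gamma)$ equals the $\al_7$-coefficient of $\gamma$ (which streamlines the bookkeeping you describe as ``a careful but unenlightening computation''), and it handles $D_4$ by the same direct method as the other cases rather than via triality, but your graph-automorphism shortcut works equally well.
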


\begin{proof}
The proof is by Lemma \ref{lem:findhighest} in each case. Recall from Proposition \ref{prop:explicitorbits} that there
are three orbits in type $D_4$, two orbits in type $D_n$ for $n \geq 5$, and one orbit otherwise. The three \tworoots
appearing in the statement of (ii) can be distinguished by comparing components (see Definition \ref{def:components}).

In type $A_n$ where $n>2$, let $\al = \al_{1,n-1}$, and let $\be = \al_{2,n}$. The roots $\al$ and $\be$ are orthogonal roots
of height $n-1$.
The only simple root $\al_i$ for which $B(\al_i, \al) = -1$ is $\al_n$, and we have
$B(\al_n, \be) = +1$. Conversely, the only simple root for which $B(\al_i, \be) = -1$ is $\al_1$, and we
have $B(\al_i, \al) = +1$. Lemma \ref{lem:findhighest} implies that $\al \vee \be$ is the highest \tworoot in its
orbit, proving (i).

The proof of (iii) is similar to that of (i).
Suppose we are in the situation of (iii), and let $\al = \al_{1,n-1}$ and $\be = \al_{1,n}$. The roots
$\al$ and $\be$ are orthogonal roots of height $n-1$.
The only simple root $\al_i$ for which $B(\al_i, \al) = -1$ is $\al_n$, and we have
$B(\al_n, \be) = +1$. Conversely, the only simple root for which $B(\al_i, \be) = -1$ is $\al_{n-1}$, and we
have $B(\al_{n-1}, \al) = +1$. Lemma \ref{lem:findhighest} implies that $\al \vee \be$ is the highest \tworoot in its
orbit, proving (iii).

Suppose that we are in the situation of (vii), and let
$\alpha=\theta-\alpha_{2,7}$ and $\beta=\theta-\alpha_{x,y}$.  Since $\alpha_7$
is the only simple root not orthogonal to $\theta$ and $B(\theta,\alpha_7)=1$,
it follows that for any root $\gamma=\sum_{j}\lambda_j \alpha_j$ we have
$B(\theta,\gamma)=\lambda_7$, so that $B(\theta,\alpha_{2,7})=1$.  It follows
that
\[
B(\alpha,\alpha_{7})=B(\theta,\alpha_7)-B(\alpha_{2,7},\alpha_7)=1-1=0
\]
and that for every generator $s\neq 7$ we have
\[
B(\alpha,\alpha_s)=B(\theta,\alpha_s)-B(\alpha_{2,7},\alpha_s)=-B(\alpha_{2,7},\alpha_s)=
\begin{cases} 
-1 & \text{if}\; s=2,\\ 
1 & \text{if}\; s=1 \text{\ or\ } s=x,\\ 
0 & \text{otherwise.}
\end{cases}
\]
Thus, the only simple root $\alpha_i$ with $B(\alpha_i,\alpha)=-1$ is
$\alpha_2$, and we note that $B(\alpha_2,\beta)=B(\alpha_2,
\theta-\alpha_{x,7})=0-B(\alpha_2,\alpha_{x,7})=1$. Similarly, we can check
that  only simple root $\alpha_j$ with $B(\alpha_j,\beta)=-1$ is $\alpha_x$ and
$B(\alpha_x,\alpha)=1$. By Lemma \ref{lem:findhighest}, to prove $\alpha\vee\beta$ is the highest
root it now suffices to show that $\alpha$ and $\beta$ are positive real roots
of the same height. To do so, recall from \cite[Proposition 5.10 (i)]{kac90} that an
element $\gamma$ in the $\Z$-span of the simple roots is a real root if and only if
$B(\gamma,\gamma)=2$. It follows that
\[
    B(\alpha,\alpha)=B(\theta,\theta)-2B(\theta,\alpha_{2,7})+B(\alpha_{2,7},\alpha_{2,7})=2-2\cdot 1+2=2,
\]
which in turn implies that $\alpha$ is a real root. A similar argument shows
that $\beta$ is also a real root. Finally, since every simple root appears with
positive integer coefficient in $\theta$ and
$\rootht(\alpha_{2,7})=\rootht(\alpha_{x,7})$, it follows that $\alpha$ and
$\beta$ are positive roots with the same height.

The proofs of (ii), (iv), (v) and (vi) are the same as the proof of (vii), {\it mutatis mutandis}.
\end{proof}

Parts (i) and (iv) of Theorem \ref{thm:highestlist} also follow from \cite[Example 4.4]{cohen06}, which 
the authors state without proof.

We record without proof the heights of the highest \tworoot in each orbit of $W$, with respect to the canonical
basis $\B$.
The explicit decomposition of the highest \tworoot in terms of the canonical basis $\B$ is also known in each case, and we
remark that in type $E_8$, $\al_7 \vee \theta_7$ is the unique element of $\B$ that occurs with coefficient $1$
in the highest \tworoot.

\begin{center}
\begin{tabular}{ |c|c| } 
 \hline
 Orbit type & Height of highest \tworoot \\
 \hline
 $A_n$ & $(n-2)^2+1$ \\
 $D_4$, three orbits & 3, 3, 3 \\
 $D_n, n \geq 5$, small orbit & $n-1$ \\
 $D_n, n \geq 5$, large orbit & $4(n-4)(n-3)+3$ \\
 $E_6$ & $28$ \\ 
 $E_7$ & $85$ \\ 
 $E_8$ & $295$ \\
 \hline
\end{tabular}
\end{center}

The sequence for $A_n$ appears as \cite[A002522]{oeis}, and the sequence for the large orbit of $D_n$ appears as \cite[A164897]{oeis}.
The reason that the highest \tworoot in the small orbit of $D_n$ has height $n-1$ is that the highest \tworoot is the sum of all
$(n-1)$ basis elements in the small orbit, each with coefficient $1$. In turn, this is due to the phenomenon described in 
Remark \ref{rmk:smallorbit}, combined with the fact that the highest root in type $A$ is the sum of all the simple roots, each with
coefficient $1$.

\section{Irreducibility}\label{sec:repthy}

The main results of Section \ref{sec:repthy} are Theorem \ref{modradical} and Theorem \ref{s2vdecomp}, which describe the indecomposable
summands of the module $M$ in terms of \tworoots.

Recall from Corollary \ref{cor:tworootlat} (i) that the lattice of \tworoots, $\Z \B$, has the structure of
a $\Z W$-module. The following result shows that any real \tworoot is an integral linear combination of
basis \tworoots from the same $W$-orbit, and that the \tworoots from a given $W$-orbit span a submodule
of $\Z \B$.

\begin{prop}\label{intdecomp}
Let $\B$ be the canonical basis of \tworoots in type $\ypqr$, and let $\trorbit_1, \trorbit_2, \ldots, \trorbit_r$ 
be the orbits of the action of $W$ on the set of real \tworoots, $\realtworoots$. 
\begin{itemize}
    \item[\text{\rm (i)}]{The $\Z W$-module $\Z \B$ decomposes as a direct sum of $\Z W$-modules $$
\Z \B \cong \bigoplus_{i=1}^r \Z (\B \cap \trorbit_i)
.$$}
    \item[\text{\rm (ii)}]{Every \tworoot in the orbit $\trorbit_i$ lies in the submodule $\Z (\B \cap \trorbit_i)$.}
\end{itemize}
\end{prop}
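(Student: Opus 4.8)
The plan is to derive both parts from the reflection formula of Theorem \ref{thm:canbasrefl}, exploiting the fact that a simple reflection sends a canonical basis element to an integral combination of canonical basis elements lying in the \emph{same} $W$-orbit.

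First I would show that for each $i$ the subgroup $\Z(\B\cap\trorbit_i)$ of $\Z\B$ is a $\Z W$-submodule. Since every Coxeter generator is its own inverse and the generators generate $W$, it is enough to check that each generator $s_\gamma$ maps $\B\cap\trorbit_i$ into $\Z(\B\cap\trorbit_i)$. Let $\al\vee\be\in\B\cap\trorbit_i$. In the first two cases of Theorem \ref{thm:canbasrefl} we have $s_\gamma(\al\vee\be)=\pm(\al\vee\be)\in\Z(\B\cap\trorbit_i)$. In the remaining case, $s_\gamma(\al\vee\be)=(\al\vee\be)+(\gamma\vee v)$ with $\gamma\vee v\in\B$, and the final clause of Theorem \ref{thm:canbasrefl} provides $w\in W$ with $w(\al\vee\be)=\gamma\vee v$; hence $\gamma\vee v$ is $W$-conjugate to $\al\vee\be$, so it lies in the orbit $\trorbit_i$ and thus in $\B\cap\trorbit_i$. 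In all cases $s_\gamma(\al\vee\be)\in\Z(\B\cap\trorbit_i)$, as required. Because the orbits $\trorbit_1,\dots,\trorbit_r$ partition $\realtworoots$ and $\B\subseteq\realtworoots$, the set $\B$ is the disjoint union of the sets $\B\cap\trorbit_i$; since $\B$ is a $\Z$-basis of $\Z\B$, this yields $\Z\B=\bigoplus_{i=1}^r\Z(\B\cap\trorbit_i)$ as abelian groups, and by the above each summand is a $\Z W$-submodule. This proves (i).

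For (ii), let $v$ be a \tworoot in $\trorbit_i$. By Proposition \ref{orthorbits} (iii) there exist $w\in W$ and $b\in\B$ with $v=w(b)$; since $W$-conjugation preserves orbits, $b$ lies in $\trorbit_i$, so $b\in\B\cap\trorbit_i$, and part (i) then gives $v=w(b)\in\Z(\B\cap\trorbit_i)$.

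I do not expect a genuine obstacle here: the one point that must not be overlooked is that Theorem \ref{thm:canbasrefl} not only expresses $s_\gamma(\al\vee\be)$ in terms of $\B$, but also identifies the new basis element as a $W$-translate of $\al\vee\be$, and it is precisely this that keeps the $W$-action confined to a single orbit. The rest is bookkeeping with the orbit partition of the basis $\B$.
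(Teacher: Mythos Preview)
Your proof is correct and follows essentially the same approach as the paper: both use the orbit partition of $\B$ for the $\Z$-module decomposition, invoke Theorem \ref{thm:canbasrefl} to see that each $\Z(\B\cap\trorbit_i)$ is $W$-stable, and then use Proposition \ref{orthorbits} (iii) together with part (i) for part (ii). You are simply more explicit than the paper about why the new basis element $\gamma\vee v$ stays in the same orbit, correctly pointing to the final clause of Theorem \ref{thm:canbasrefl}.
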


\begin{proof}
Since the sets $\B \cap \trorbit_i$ partition $\B$, it follows that we have a direct sum decomposition of 
$\Z \B$ as $\Z$-modules of the form given in the statement.
Theorem \ref{thm:canbasrefl} implies that the $\Z$-modules $\Z (\B \cap \trorbit_i)$ are $\Z W$-modules,
and this completes the proof of (i).

Proposition \ref{orthorbits} (iii) shows that for any real \tworoot $\al \vee \be \in \trorbit_i$, there exists 
$w \in W$ with $$w(\al \vee \be) \in \B \cap \trorbit_i$$ for some $i$. By part (i), the result of 
applying $w^{-1}$ to $w(\al \vee \be)$ also lies in $\B \cap \trorbit_i$, which proves (ii).
\end{proof}

The Coxeter bilinear form $B$ naturally gives a $W$-invariant bilinear form on $S^2(V)$, which we 
denote by $\otherb$. It is defined as the unique linear map satisfying \begin{align*}
\otherb(\al_i \vee \al_j, \al_k \vee \al_l) 
&= \perm \left( 
\begin{matrix}
B(\al_i, \al_k) & B(\al_i, \al_l) \\
B(\al_j, \al_k) & B(\al_j, \al_l) \\
\end{matrix}
\right)\\
&= B(\al_i, \al_k)B(\al_j, \al_l) + B(\al_i, \al_l)B(\al_j, \al_k)
,\end{align*} where $\perm$ denotes the permanent of a matrix.
The form $\otherb$ restricts to an integer-valued form on the lattice $\Z \B$.
We can also make $\otherb$ into a nonzero $F$-valued form on the $FW$-module $F \B = F \otimes_\Z \Z \B$. 

\begin{lemma}\label{lem:perm}
Let $F$ be an arbitrary field and let $\otherb$ be the bilinear form on $F \B$ defined above. 
If $\al$ and $\be$ are orthogonal real roots and $v \in F \B$, then we have $$
    C_\al C_\be(v) = \otherb(\al \vee \be, v)\, (\al \vee \be)
    .$$
\end{lemma}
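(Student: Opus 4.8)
The plan is to reduce to the case where $v$ is a standard basis element $\al_k \vee \al_l$ of $S^2(V)$, using bilinearity of both sides in $v$; note that $FB = F\otimes_\Z \Z\B$ and $M$ are spanned by a subset of the standard basis expansions, but since the identity is $F$-linear in $v$ it suffices to verify it on the standard basis $\{\al_k \vee \al_l\}$ of $S^2(V)$ (here I am implicitly using that $C_\al C_\be$ and $\otherb(\al\vee\be,-)$ are defined on all of $S^2(V)$, and the claimed equality on $M \leq S^2(V)$ then follows by restriction). Actually, the cleaner route is to observe that both $v \mapsto C_\al C_\be(v)$ and $v \mapsto \otherb(\al\vee\be,v)(\al\vee\be)$ are $F$-linear maps $S^2(V) \to S^2(V)$, so it is enough to check they agree on the standard basis.

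The key computation uses Lemma \ref{lem:caction} (iii), which tells us that $C_\al C_\be$ acts on $V\otimes V$ as $C_\al \otimes C_\be + C_\be \otimes C_\al$. First I would apply this to a decomposable tensor $\al_k \otimes \al_l$ and use Lemma \ref{lem:caction} (i), namely $C_\al(w) = -B(\al,w)\al$, to get
\[
(C_\al \otimes C_\be + C_\be \otimes C_\al)(\al_k \otimes \al_l) = B(\al,\al_k)B(\be,\al_l)\,(\al\otimes\be) + B(\be,\al_k)B(\al,\al_l)\,(\be\otimes\al).
\]
Applying this to $\al_k \vee \al_l = \al_k \otimes \al_l + \al_l \otimes \al_k$ and collecting terms, the right-hand side becomes
\[
\big( B(\al,\al_k)B(\be,\al_l) + B(\al,\al_l)B(\be,\al_k) \big)\,(\al\otimes\be) + \big( B(\be,\al_k)B(\al,\al_l) + B(\be,\al_l)B(\al,\al_k)\big)\,(\be\otimes\al),
\]
and the two scalar coefficients are equal, so this collapses to $\big(B(\al,\al_k)B(\be,\al_l) + B(\al,\al_l)B(\be,\al_k)\big)(\al \vee \be)$. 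I would then recognize the scalar as exactly the permanent $\perm\!\left(\begin{smallmatrix} B(\al,\al_k) & B(\al,\al_l) \\ B(\be,\al_k) & B(\be,\al_l)\end{smallmatrix}\right)$, which by the definition of $\otherb$ (extended bilinearly, and using that $\al,\be$ are linear combinations of simple roots) equals $\otherb(\al\vee\be,\,\al_k\vee\al_l)$. This gives the identity on standard basis elements, hence on all of $S^2(V)$ by linearity, hence on $F\B$.

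I do not expect a serious obstacle here; the only mild care needed is bookkeeping in matching the permanent formula with the output of the $C_\al \otimes C_\be + C_\be \otimes C_\al$ computation, and making sure the bilinear extension of $\otherb$ from simple roots to arbitrary roots $\al,\be$ is used consistently (so that $\otherb(\al\vee\be, \al_k\vee\al_l) = \perm$ of the $2\times 2$ matrix of $B$-values holds with $\al,\be$ in place of simple roots). One should also note at the outset that $C_\al C_\be = C_\be C_\al$ by Lemma \ref{lem:caction} (iii), so the expression is symmetric in $\al$ and $\be$, consistent with $\al\vee\be = \be\vee\al$.
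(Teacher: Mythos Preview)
Your proposal is correct and follows essentially the same approach as the paper: reduce by linearity to symmetrized simple tensors, apply Lemma~\ref{lem:caction} (iii) to write $C_\al C_\be$ as $C_\al \otimes C_\be + C_\be \otimes C_\al$, then use Lemma~\ref{lem:caction} (i) to compute each term and recognize the resulting scalar as $\otherb(\al\vee\be,\,v_1\vee v_2)$. The paper streamlines slightly by working directly with arbitrary $v_1, v_2 \in V$ rather than simple roots, which avoids the separate remark about extending $\otherb$ bilinearly, but the argument is the same.
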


\begin{proof}
It is enough to consider the case where $v = v_1 \vee v_2$ is a symmetrized simple tensor, 
because the general case follows by linearity.  By Lemma \ref{lem:caction} (iii), we have $$
C_\al C_\be(v_1 \vee v_2) = (C_\al \otimes C_\be + C_\be \otimes C_\al)(v_1 \vee v_2)
.$$ Lemma \ref{lem:caction} (i) implies that $$
(C_\al \otimes C_\be)(v_1 \vee v_2) = B(\al, v_1)B(\be, v_2)(\al \otimes \be) + 
B(\al, v_2)B(\be, v_1)(\al \otimes \be)
.$$ Adding this to the analogous expression for $(C_\be \otimes C_\al)(v_1 \vee v_2)$ gives \begin{align*}
(C_\al \otimes C_\be + C_\be \otimes C_\al)(v_1 \vee v_2) &= 
\big(
B(\al, v_1)B(\be, v_2) + B(\al, v_2)B(\be, v_1)
\big) (\al \vee \be)\\ &= 
\otherb(\al \vee \be, v_1 \vee v_2) (\al \vee \be)
,\end{align*} as required.
\end{proof}

If $N$ is an $FW$-submodule of $F \B$, we define the {\it radical}, $\radi(N)$ of $N$, to be $$
\radi(N) = \{v \in N : \otherb(v, v') = 0 \text{\rm \ for\ all\ } v' \in N \} 
.$$ It is immediate from the $W$-invariance of $\otherb$ that $\radi(N)$ is a submodule of
$F \B$.

In the next result, we define the $FW$-module $F (\B \cap \trorbit)$ to be $F \otimes_\Z \Z (\B \cap \trorbit)$.

\begin{theorem}\label{modradical}
Let $\B$ be the canonical basis of \tworoots in type $\ypqr$, let $\trorbit$ be a $W$-orbit of \tworoots,
let $F$ be an arbitrary field, let $N = F(\B \cap \trorbit)$, and assume the restriction of the bilinear form
$\otherb$ to $N$ is nonzero. 
\begin{itemize}
\item[\text{\rm (i)}]{The radical $\radi(N)$ of $N$ (with respect to $\otherb$) is the unique maximal $FW$-submodule of $N$.}
    \item[\text{\rm (ii)}]{The module $N/\radi(N)$ is an irreducible $F W$-module.}
    \item[\text{\rm (iii)}]{The module $N$ is indecomposable, and is irreducible if and only if $\radi(N) = 0$.}
\end{itemize}
\end{theorem}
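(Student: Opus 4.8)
The plan is to exploit the nondegenerate/degenerate dichotomy for the restricted form $\otherb$ on $N$, using Lemma~\ref{lem:perm} as the main computational engine. First I would prove (i): let $P$ be any nonzero $FW$-submodule of $N$. Since $P\neq 0$, pick a nonzero $v\in P$; because $\B\cap\trorbit$ spans $N$ and consists of \tworoots in a single orbit, and because every element of $\B\cap\trorbit$ has the form $\al\vee\be$ for an orthogonal pair of real roots, I want to produce some $\al\vee\be\in\B\cap\trorbit$ with $\otherb(\al\vee\be,v)\neq 0$. If no such element existed, then $\otherb(v,v')=0$ for all $v'\in N$ (since $\B\cap\trorbit$ spans $N$), contradicting nothing by itself---but the point is that if $v\notin\radi(N)$, some such element exists, and then Lemma~\ref{lem:perm} gives $C_\al C_\be(v)=\otherb(\al\vee\be,v)\,(\al\vee\be)$, a nonzero scalar multiple of a basis \tworoot lying in $P$ (as $P$ is a $W$-submodule and $C_\al C_\be\in FW$). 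Thus $P$ contains an element of $\B\cap\trorbit$; since $W$ acts transitively on $\B\cap\trorbit$ up to sign (Proposition~\ref{orthorbits}~(iii) together with Proposition~\ref{intdecomp}~(ii), which keeps us inside the orbit), $P$ contains all of $\B\cap\trorbit$ and hence $P=N$. Therefore any submodule $P$ not contained in $\radi(N)$ equals $N$; equivalently, every proper submodule lies in $\radi(N)$, so $\radi(N)$ is the unique maximal proper submodule (it is proper since $\otherb|_N\neq 0$).

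For (ii), irreducibility of $N/\radi(N)$ is then formal: submodules of $N/\radi(N)$ correspond to submodules of $N$ containing $\radi(N)$, and by (i) there are only $\radi(N)$ and $N$ themselves, so the quotient has no proper nonzero submodule. For (iii), indecomposability follows from the existence of a unique maximal submodule: if $N=N_1\oplus N_2$ with both summands nonzero and proper, then each $N_i\subseteq\radi(N)$ by (i), forcing $N=N_1+N_2\subseteq\radi(N)$, a contradiction. Finally $N$ is irreducible iff its unique maximal proper submodule is zero, i.e.\ iff $\radi(N)=0$.

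The one step requiring care---and the main obstacle---is the claim that from a single nonzero element of a $W$-submodule $P\subseteq N$ one can reach a basis \tworoot inside $P$. The argument above shows this works provided $P\not\subseteq\radi(N)$; but I also need the case $P\subseteq\radi(N)$ handled, which is automatic (such $P$ is one of the "proper submodules" we are cataloguing, so no contradiction arises, and the uniqueness statement in (i) still goes through). I should double-check that $\otherb|_N\neq 0$ genuinely forces $\radi(N)\neq N$: indeed if $\radi(N)=N$ then $\otherb(v,v')=0$ for all $v,v'\in N$, contradicting the hypothesis. I should also note that Lemma~\ref{lem:perm} is stated for $v\in F\B$, so I must restrict attention to $v\in N\subseteq F\B$ and observe that $C_\al C_\be$ preserves $N$ (since $\al\vee\be\in\trorbit$ and $N=F(\B\cap\trorbit)$ is $W$-stable by Proposition~\ref{intdecomp}~(i)), so the output $\otherb(\al\vee\be,v)(\al\vee\be)$ indeed lies in $N\cap P=P$. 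No other subtlety is expected; the proof is essentially the standard argument that a module carrying a nonzero invariant form whose "basis vectors are cyclic generators under the $C_\al C_\be$ operators" has a unique maximal submodule.
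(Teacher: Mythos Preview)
Your proposal is correct and follows essentially the same approach as the paper's proof: pick $v\notin\radi(N)$, find a \tworoot $\al\vee\be$ in the orbit with $\otherb(\al\vee\be,v)\neq 0$, apply Lemma~\ref{lem:perm} to get a nonzero multiple of $\al\vee\be$ inside the submodule, and then use transitivity of $W$ on the orbit to recover all of $N$; parts (ii) and (iii) are then deduced formally exactly as you do. The only cosmetic difference is that you phrase the search for $\al\vee\be$ over $\B\cap\trorbit$ while the paper phrases it over all of $\trorbit$, but since $\B\cap\trorbit$ spans $N$ these are equivalent.
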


\begin{proof}
Since $\otherb$ is assumed not to be zero, it follows that $\radi(N)$ is a proper
submodule of $N$. To prove (i), it remains to show that every proper submodule of $N$ is contained in
$\radi(N)$. The proof reduces to showing that if $N' \leq N$ is a submodule containing an element 
$v \in N \backslash \radi(N)$ then the submodule $\langle v \rangle$ generated by $v$ is equal to 
$N$.

Fix an element $v \in N \backslash \radi(N)$.
Since $v \not\in \radi(N)$, there must be a \tworoot $\al \vee \be$ in the $W$-orbit $\trorbit$ such that 
$\otherb(\al \vee \be, v) \ne 0$. Lemma \ref{lem:perm} then implies that $\langle v \rangle$ contains 
$C_\al C_\be(v)$, which is a nonzero
multiple of $\al \vee \be$. It follows that $\langle v \rangle$ contains $\al \vee \be$, which means that
$\langle v \rangle$ contains the whole orbit $\trorbit$, and thus the whole of $N$. This completes
the proof of (i).

Part (ii) follows from part (i), and the second assertion of (iii) follows from (ii). 
If $N$ could be expressed as a nontrivial direct sum of modules $N \cong N_1 \oplus N_2$, then (i) would
imply that both $N_1$ and $N_2$ were contained in the proper submodule $\radi(N)$, which is a contradiction.
Part (iii) now follows.
\end{proof}

\begin{rmk}\label{rmk:modradical}
The requirement in Theorem \ref{modradical} that $\otherb$ should not vanish on $N$ is a mild assumption. 
This condition is always satisfied when the field does not have characteristic $2$, because any \tworoot
$\al \vee \be \in \trorbit$ satisfies $$
\otherb(\al_i \vee \al_j, \al_i \vee \al_j) = 4 \ne 0
.$$ Even in characteristic $2$, the bilinear form $\otherb$ will
not be zero provided that $W_I$ contains a parabolic subgroup of type $A_4$, because in type $A_4$ we have $$
\otherb(\al_1 \vee \al_3, \al_2 \vee \al_4) = 1
.$$ However, the form $\otherb$ is zero in type $A_3$ in characteristic $2$.
\end{rmk}

For the rest of Section \ref{sec:repthy}, we will assume that $F$ is a field of characteristic $0$.


Let $\thirdb$ be the bilinear form on $V \otimes V$ satisfying $$
\thirdb(b_i \otimes b_j, b_k \otimes b_l) = B(b_i, b_k)B(b_j, b_l)
,$$ and let $\tau : V \otimes V \rightarrow V \otimes V$ be the linear map satisfying
$\tau(b_i \otimes b_j) = b_j \otimes b_i$.
We identify the symmetric square $S^2(V)$ and the exterior square $\exterior{2}(V)$ of $V$ with the eigenspaces 
of $\tau$ for the eigenvalues $1$ and $-1$, respectively.

\begin{rmk}\label{rmk:variousb}
The forms $\thirdb$ and $\otherb$ are closely related. It follows from the definitions that the restriction of
$\thirdb$ to the module $M \leq S^2(V)$ satisfies \begin{align*}
\thirdb(\al_i \vee \al_j, \al_k \vee \al_l) 
&= 2 \big( B(\al_i, \al_k) B(\al_j, \al_l) + B(\al_i, \al_l) B(\al_j, \al_k) \big) \\
&= 2 \otherb(\al_i \vee \al_j, \al_k \vee \al_l).
\end{align*}
In particular, if the characteristic of $F$ is not $2$, the form $\otherb$ is nondegenerate on $M$ if and only if 
$\thirdb$ is.
\end{rmk}   

\begin{lemma}\label{bbnondeg}
The following are equivalent:
\begin{itemize}
    \item[\text{\rm (i)}]{$B$ is nondegenerate on $V$;}
    \item[\text{\rm (ii)}]{$\thirdb$ is nondegenerate on $V \otimes V$;}
    \item[\text{\rm (iii)}]{the restrictions of $\thirdb$ to $S^2(V)$ and to $\exterior{2}(V)$ are both 
    nondegenerate.}
\end{itemize}
\end{lemma}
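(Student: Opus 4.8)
The plan is to prove the cycle of implications by elementary linear algebra, exploiting that $\thirdb$ is the tensor square of $B$ and is invariant under the flip $\tau$.

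First I would dispose of (i) $\Leftrightarrow$ (ii). Fix a basis $\{b_1, \dots, b_n\}$ of $V$ and let $G = \big(B(b_i, b_j)\big)$ be the Gram matrix of $B$. The defining formula $\thirdb(b_i \otimes b_j, b_k \otimes b_l) = B(b_i, b_k)B(b_j, b_l)$ says precisely that the Gram matrix of $\thirdb$ in the basis $\{b_i \otimes b_j\}$ of $V \otimes V$ is the Kronecker product $G \otimes G$. Since $\det(G \otimes G) = (\det G)^{2n}$, the form $\thirdb$ is nondegenerate if and only if $\det G \neq 0$, that is, if and only if $B$ is nondegenerate, i.e.\ $V^\perp = 0$. (Equivalently, one checks directly that the radical of $\thirdb$ is $V^\perp \otimes V + V \otimes V^\perp$.)

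For (ii) $\Leftrightarrow$ (iii), the key point is that $\tau$ is an isometry of $\thirdb$: from the defining formula, $\thirdb(\tau x, \tau y) = \thirdb(x, y)$ for all $x, y \in V \otimes V$, since swapping the two tensor factors in both arguments merely transposes the two $B$-factors in the product. As $F$ has characteristic zero, $V \otimes V = S^2(V) \oplus \exterior{2}(V)$ is the decomposition into the $(+1)$- and $(-1)$-eigenspaces of $\tau$; and for $x \in S^2(V)$ and $y \in \exterior{2}(V)$ one gets $\thirdb(x, y) = \thirdb(\tau x, \tau y) = \thirdb(x, -y) = -\thirdb(x, y)$, hence $\thirdb(x, y) = 0$. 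Thus the restriction of $\thirdb$ to $V \otimes V$ is the orthogonal direct sum of its restrictions to $S^2(V)$ and to $\exterior{2}(V)$, and a bilinear form on an orthogonal direct sum is nondegenerate precisely when each summand restriction is (its Gram matrix is block diagonal). This gives (ii) $\Leftrightarrow$ (iii) and completes the chain.

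The argument is routine and I do not anticipate a real obstacle. The only points needing any care are the determinant identity for Kronecker products and the block-diagonal criterion for nondegeneracy of a form on an orthogonal direct sum, both standard. The characteristic-zero hypothesis is used twice: to split $V \otimes V$ as $S^2(V) \oplus \exterior{2}(V)$, and to pass from $\thirdb(x, y) = -\thirdb(x, y)$ to $\thirdb(x, y) = 0$.
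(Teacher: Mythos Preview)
Your proof is correct and follows essentially the same route as the paper's: the Kronecker-product Gram matrix argument for (i)\,$\Leftrightarrow$\,(ii), and the orthogonal decomposition $V\otimes V = S^2(V)\oplus\exterior{2}(V)$ with a block-diagonal Gram matrix for (ii)\,$\Leftrightarrow$\,(iii). The only cosmetic difference is that the paper verifies orthogonality of $S^2(V)$ and $\exterior{2}(V)$ by expanding $\thirdb(\al_i\vee\al_j,\al_k\wedge\al_l)$ directly and observing the terms cancel in pairs, whereas you deduce it from the fact that $\tau$ is a $\thirdb$-isometry; these are equivalent one-line arguments.
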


\begin{proof}
To prove the equivalence of (i) and (ii), 
let $G \in M_n(k)$ be the Gram matrix of $B$ with $G_{ij} = B(b_i, b_j)$. 
The Gram matrix of $\thirdb$ is the Kronecker product $G \otimes G$, whose determinant is given 
by $(\det(G))^{2n}$. It follows that $G \otimes G$ is invertible if and only if $G$ is invertible,
and therefore that $\thirdb$ is nondegenerate if and only if $B$ is nondegenerate.

To prove the equivalence of (ii) and (iii), note that we have $$
V \otimes V \cong S^2(V) \oplus \exterior{2}(V)
$$ as $F$-vector spaces, because the characteristic of $F$ is zero. 
Setting $\al_k \wedge \al_l := \al_k \otimes \al_l - \al_l \otimes \al_k$, we have $$
\thirdb(\al_i \vee \al_j, \al_k \wedge \al_l) 
= \thirdb \big(\al_i \otimes \al_j + \al_j \otimes \al_i, \al_k \otimes \al_l - \al_l \otimes \al_k \big) 
,$$ where all the terms cancel in pairs to give zero. It follows that $S^2(V)$ and $\exterior{2}(V)$ are orthogonal
to each other with respect to the form $\thirdb$. By computing the Gram matrix $G$
of $\thirdb$ using a basis compatible with this decomposition, we obtain a block diagonal matrix whose
two blocks are the Gram matrix of $\thirdb$ restricted to $S^2(V)$ and to $\exterior{2}(V)$. It follows
that $G$ is invertible if and only if both these blocks are invertible.
\end{proof}

Now assume that the form $B$ is nondegenerate, or equivalently by Proposition \ref{virred} that 
we are not in any of the three affine types. Let $\simproots^* = \{\al^*_1, \ldots \al^*_n\}$ be the 
dual basis of $\simproots$, which we identify with a subset of $V$ in the usual way, via $$
B(\al^*_i, \al_j) = \delta_{ij}
,$$ where $\delta$ is the Kronecker delta.

Following the theory of vertex operator algebras \cite{griess98},
we define the {\it Virasoro element} of $B$ (with respect to the basis $\simproots$) to be the element $$
\virasoro = \sum_{i = 1}^n \al^*_i \otimes \al_i
.$$ In \cite[\S2]{griess98}, the element $\virasoro$ appears in the context of an algebra with identity 
${\mathbb I} = \virasoro/2$, and the bilinear form $\otherb$ is denoted by $\langle \, , \, \rangle$.
We will show that $\virasoro$ spans a complement in $S^2(V)$ to the submodule $M$. Although most of the
next result is known from the vertex operator algebras literature, we will give a self-contained proof
for the convenience of the reader and in order to fix notation.

\begin{prop}\label{prop:virasoro}
Maintain the above notation, and assume that $B$ is nondegenerate.
\begin{itemize}
\item[{\rm (i)}]{For any $v \in V \otimes V$, we have $\thirdb(\virasoro, v) = B(v)$.}
\item[{\rm (ii)}]{The Virasoro element $\virasoro$ is independent of the choice of basis $\Pi$.}
\item[{\rm (iii)}]{The Virasoro element $\virasoro$ is symmetric, meaning that $\tau(\virasoro) = 
\virasoro$, and $$\virasoro = \frac{1}{2} \sum_{i = 1}^n \al^*_i \vee \al_i
.$$}
\item[{\rm (iv)}]{The Virasoro element $\virasoro$ is fixed by the action of any $w \in W$.}
\item[{\rm (v)}]{We have $\thirdb(\virasoro, \virasoro) = n$, and $\virasoro \in S^2(V) \backslash M$.}
\end{itemize}
\end{prop}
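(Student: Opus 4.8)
The plan is to deduce all five parts from one computation together with the nondegeneracy of $\thirdb$ supplied by Lemma \ref{bbnondeg}.

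First I would prove (i). By bilinearity of $\thirdb$ it suffices to check the identity on a simple tensor $v = x \otimes y$ with $x, y \in V$. Then $\thirdb(\virasoro, x \otimes y) = \sum_{i} B(\al^*_i, x)\, B(\al_i, y)$, and since the scalars $B(\al^*_i, x)$ are precisely the coordinates of $x$ in the basis $\simproots$, we have $\sum_i B(\al^*_i, x)\,\al_i = x$, so the sum equals $B(x, y) = B(x\otimes y)$, as desired. I would then note that this same argument, run with an arbitrary basis $\{e_j\}$ of $V$ and its $B$-dual basis $\{e^*_j\}$ in place of $\simproots$ and $\simproots^*$, shows that $\sum_j e^*_j \otimes e_j$ also satisfies $\thirdb(\,\cdot\,, v) = B(v)$ for every $v \in V \otimes V$. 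Because $B$ is assumed nondegenerate, Lemma \ref{bbnondeg} gives that $\thirdb$ is nondegenerate on $V \otimes V$, so the vector representing the functional $B : V\otimes V \to F$ via $\thirdb$ is unique; this proves (ii).

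Parts (iii) and (iv) then reduce to (ii). For (iii): the $B$-dual basis of $\simproots^* = \{\al^*_1, \dots, \al^*_n\}$ is $\simproots$ itself, since $B(\al_i, \al^*_j) = B(\al^*_j, \al_i) = \delta_{ij}$; hence $\sum_i \al_i \otimes \al^*_i$ is the Virasoro element built from the basis $\simproots^*$, and by (ii) it equals $\virasoro$. But $\sum_i \al_i \otimes \al^*_i = \tau(\virasoro)$, so $\tau(\virasoro) = \virasoro$, i.e. $\virasoro \in S^2(V)$, and then $\virasoro = \frac{1}{2}\bigl(\virasoro + \tau(\virasoro)\bigr) = \frac{1}{2}\sum_i \al^*_i \vee \al_i$. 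For (iv): given $w \in W$, the set $\{w(\al_i)\}$ is a basis of $V$ whose $B$-dual basis is $\{w(\al^*_i)\}$, because $B(w(\al^*_i), w(\al_j)) = B(\al^*_i, \al_j) = \delta_{ij}$ by $W$-invariance of $B$; therefore $w\cdot\virasoro = \sum_i w(\al^*_i)\otimes w(\al_i)$ is the Virasoro element of that basis, and (ii) forces $w\cdot\virasoro = \virasoro$.

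Finally, (v) follows from (i) applied with $v = \virasoro$: $\thirdb(\virasoro, \virasoro) = B(\virasoro) = \sum_i B(\al^*_i, \al_i) = n$, which is nonzero because $F$ has characteristic zero. By (iii), $\virasoro \in S^2(V)$, and since $B(\virasoro) = n \neq 0$, the element $\virasoro$ does not lie in $\ker B = M$. The only substantive content is the bookkeeping with dual bases and the symmetry/invariance of $B$; I anticipate no real obstacle, the one point needing care being to state the proof of (i) so that it applies verbatim to an arbitrary basis, since that is exactly what makes (ii)—and hence (iii) and (iv)—fall out.
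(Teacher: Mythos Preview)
Your proposal is correct and follows essentially the same approach as the paper: both prove (i) by direct computation on simple tensors, deduce (ii) from the nondegeneracy of $\thirdb$ via Lemma \ref{bbnondeg}, and then obtain (iii) and (iv) by applying (ii) to the dual basis and to the $w$-translated basis respectively. Your argument for (v), applying (i) with $v=\virasoro$ to get $\thirdb(\virasoro,\virasoro)=B(\virasoro)=\sum_i B(\al^*_i,\al_i)=n$, is a minor streamlining of the paper's direct computation, but not a substantively different route.
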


\begin{proof}
It follows from the definitions that for all $1 \leq i, j, k \leq n$, we have $$
\thirdb(\al^*_i \otimes \al_i, \al_j \otimes \al_k) = B(\al^*_i, \al_j)B(\al_i, \al_k) = \delta_{ij} B(\al_j, \al_k) 
.$$ Part (i) follows after summing over $i$.

The form $\thirdb$ is nondegenerate by Lemma \ref{bbnondeg}, and this implies that there is a unique
element $R \in V \otimes V$ with the property that $\thirdb(R,v) = B(v)$ for all $v \in V \otimes V$. This 
implies that $\virasoro$
is characterized by the property in (i). This characterization is basis-free, proving (ii).

By part (ii), we may also define $\virasoro$ with respect to the dual basis of $\simproots$, proving that $$
\virasoro = \sum_{i=1}^n \al_i \otimes \al^*_i
.$$ Comparing this with the original definition of $\virasoro$ implies that $\virasoro = \tau(\virasoro)$, 
proving (iii).

Given $w \in W$, part (ii) shows that we can compute the Virasoro element with respect to the 
basis $\{w(\al_i)\}_{i=1}^n$. Because $B$ is $W$-invariant, the dual basis in this case is 
$\{w(\al^*_i)\}_{i=1}^n$, and we have $$
w(\virasoro) 
= \sum_{i=1}^n w(\al_i \otimes \al^*_i) 
= \sum_{i=1}^n w(\al_i) \otimes w(\al^*_i) = \virasoro
,$$ which proves (iv).

By part (iii), we have $$
\thirdb(\virasoro, \virasoro) = \thirdb\left( \sum_{i=1}^n \al^*_i \otimes \al_i, \sum_{j=1}^n \al_j \otimes \al^*_j \right)
= \sum_{i, j=1}^n B(\al^*_i, \al_j)B(\al_i, \al^*_j) = \sum_{i,j=1}^n \delta_{ij}^2 = n
,$$ which proves the first assertion of (v).
Part (i) implies that an element $v \in S^2(V)$ lies in $M$ if and only if $\thirdb(\virasoro, v) = 0$, and the second
assertion of (v) follows from (iii) and the fact that $\thirdb(\virasoro, \virasoro) \ne 0$.
\end{proof}

\begin{theorem}\label{s2vdecomp}
Let $W$ be a Weyl group of type $\ypqr$ and let $V$ be the reflection representation of $W$ over a field $F$ of 
characteristic zero. If $\ypqr$ is not of affine type, then the module $S^2(V)$ decomposes as a direct sum of
irreducible modules: the one-dimensional module $F\virasoro$, and the modules $F(\B \cap \trorbit_i)$ corresponding
to the orbits $\trorbit_i$ of \tworoots.
\end{theorem}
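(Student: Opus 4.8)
The plan is to split off the one-dimensional summand $F\virasoro$, decompose the complement $M$ into orbit pieces via Proposition \ref{intdecomp}, and then prove each orbit piece is irreducible by showing that the form $\otherb$ is nondegenerate on it and invoking Theorem \ref{modradical}. The non-affine hypothesis enters only once, to guarantee nondegeneracy: by Proposition \ref{virred} we have $V^\perp=0$, so $B$ is nondegenerate on $V$, and hence by Lemma \ref{bbnondeg} the form $\thirdb$ is nondegenerate on $S^2(V)$.

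First I would produce the two ingredient decompositions. Since $\virasoro$ is $W$-fixed (Proposition \ref{prop:virasoro}(iv)) and lies outside $M=\ker B$ (Proposition \ref{prop:virasoro}(v)), while $M$ is a $W$-submodule of codimension one (Lemma \ref{codimone}), a dimension count gives a decomposition of $FW$-modules $S^2(V)=M\oplus F\virasoro$, with $F\virasoro$ affording the trivial representation. Next, by Theorem \ref{canbas} the submodule $M$ is the $F$-span of $\B$, and extending scalars in Proposition \ref{intdecomp}(i) yields a decomposition of $FW$-modules $M=\bigoplus_{i=1}^r N_i$, where $N_i:=F(\B\cap\trorbit_i)$ and $\trorbit_1,\ldots,\trorbit_r$ are the $W$-orbits of real \tworoots (each nonempty in $\B$ by Proposition \ref{orthorbits}(iii)). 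So it remains to prove that each $N_i$ is irreducible.

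The heart of the argument is that the decomposition $S^2(V)=F\virasoro\oplus N_1\oplus\cdots\oplus N_r$ is orthogonal with respect to $\thirdb$. Orthogonality of $F\virasoro$ to $M$, hence to every $N_i$, is immediate from Proposition \ref{prop:virasoro}(i), since $\thirdb(\virasoro,v)=B(v)=0$ for $v\in M=\ker B$. For the orthogonality of distinct $N_i$, note that $\thirdb$ and $\otherb$ agree up to the factor $2$ (Remark \ref{rmk:variousb}), so it suffices to show $\otherb(N_i,N_j)=0$ for $i\neq j$. Given $\al\vee\be\in\B\cap\trorbit_i$ and $v\in N_j$, Lemma \ref{lem:perm} gives $C_\al C_\be(v)=\otherb(\al\vee\be,v)\,(\al\vee\be)$; the left-hand side lies in $N_j$ because $N_j$ is a $W$-submodule, the vector $\al\vee\be$ lies in $N_i$, and $N_i\cap N_j=0$, forcing $\otherb(\al\vee\be,v)=0$. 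Since $\B\cap\trorbit_i$ spans $N_i$, bilinearity gives $\otherb(N_i,N_j)=0$. Now $\thirdb$ is nondegenerate on $S^2(V)$, and a nondegenerate form on an orthogonal direct sum restricts to a nondegenerate form on each summand, so $\thirdb|_{N_i}$—equivalently (char $0$) $\otherb|_{N_i}$—is nondegenerate. In particular $\otherb|_{N_i}$ is nonzero, so Theorem \ref{modradical} applies to $N_i$, and its radical $\radi(N_i)$, which is exactly the radical of the form $\otherb|_{N_i}$, vanishes. By Theorem \ref{modradical}(iii), $N_i$ is irreducible, and combining this with $S^2(V)=F\virasoro\oplus\bigoplus_i N_i$ finishes the proof.

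The main obstacle is establishing the orthogonality of the orbit summands $N_i$ under $\otherb$; once that is in place, the rest is a dimension count together with the cited structural results. The device that makes the orthogonality cheap is Lemma \ref{lem:perm}, which realizes the scalar $\otherb(\al\vee\be,v)$ as the coefficient of $\al\vee\be$ in the vector $C_\al C_\be(v)$; since that vector must lie in $N_j$ while $\al\vee\be$ lies in $N_i$, the transversality $N_i\cap N_j=0$ does the work and no explicit computation with permanents is needed.
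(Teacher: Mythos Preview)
Your proof is correct and in some ways cleaner than the paper's own argument. The paper splits into two cases. When $W$ is finite, it invokes Maschke's theorem: Proposition \ref{intdecomp} gives the decomposition $M\cong\bigoplus_i F(\B\cap\trorbit_i)$, Theorem \ref{modradical}(iii) shows each summand is indecomposable, and then Maschke upgrades indecomposable to irreducible. When $W$ is infinite there is only one orbit, so $M$ itself is the single summand, and the paper shows $\radi(M)=0$ directly by arguing that any $m\in M\cap M^\perp$ would force $\Span(m)^\perp=M=\Span(\virasoro)^\perp$, contradicting $\virasoro\notin M$.

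Your route is uniform: you establish that the full decomposition $S^2(V)=F\virasoro\oplus\bigoplus_i N_i$ is $\thirdb$-orthogonal, using Lemma \ref{lem:perm} to force $\otherb(N_i,N_j)=0$ for $i\ne j$ (the element $C_\al C_\be(v)$ is simultaneously a multiple of $\al\vee\be\in N_i$ and an element of the submodule $N_j$, so it vanishes). Nondegeneracy of $\thirdb$ on $S^2(V)$ then descends to each summand, giving $\radi(N_i)=0$ without ever invoking Maschke or splitting into cases. This has the advantage of proving an extra structural fact---the orbit summands are mutually $\otherb$-orthogonal---and of treating the finite and infinite cases on the same footing. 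The paper's finite-type argument is marginally shorter since it only needs indecomposability from Theorem \ref{modradical}, but yours is more informative.
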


\begin{proof}
By Proposition \ref{virred}, the form $B$ is nondegenerate, and it follows from remarks \ref{rmk:modradical} and 
\ref{rmk:variousb} that $\otherb$ and $\thirdb$ are nonzero when restricted to each summand $F(\B \cap \trorbit_i)$.
Proposition \ref{prop:virasoro} (iv) and (v) imply that
the module $S^2(V)$ is isomorphic to $\Span(\virasoro) \oplus M$, where $\Span(\virasoro)$ affords the trivial
representation of $W$. It remains to show that the module $M$ decomposes as the direct sum of the modules
$F(\B \cap \trorbit_i)$, and that these modules are irreducible.

We first consider the case where $W$ is finite. It follows 
from Proposition \ref{intdecomp} (i), by extending scalars to $F$, that we have $$
F \B \cong \bigoplus_{i=1}^r F (\B \cap \trorbit_i)
.$$ The modules in the direct sum are indecomposable by Theorem \ref{modradical} (iii), and irreducible by Maschke's Theorem, which
completes the proof in this case.

Assume from now on that $W$ is infinite, which means by Lemma \ref{lem:componentcount} that there is a single orbit of \tworoots,
and that the module $F (\B \cap \trorbit)$ is $M$ itself.
For any subspace $N \leq S^2(V)$, define $N^\perp$ to be the subspace $$
N^\perp := \{ v \in S^2(V) : \thirdb(v, v') = 0 \text{\rm \ for\ all\ } v' \in N  \}
.$$ The nondegeneracy of $\thirdb$ on $S^2(V)$, proved in Lemma \ref{bbnondeg} (iii), shows that we always have
$\dim(N) + \dim(N^\perp) = \dim(S^2(V))$.

Assume for a contradiction that there exists a nonzero element 
$m \in M \cap M^\perp$, so that we have $M \leq \Span(m)^\perp$. The previous paragraph shows that 
$\dim(\Span(m)^\perp) = \dim(M)$, which implies that $M = \Span(m)^\perp$. However, Proposition \ref{prop:virasoro} (i) shows
that $M = \Span(\virasoro)^\perp$, and the nondegeneracy of $\thirdb$ on $S^2(V)$ then implies that $\Span(m) = \Span(\virasoro)$,
which contradicts Proposition \ref{prop:virasoro} (v). It follows that $M \cap M^\perp = \radi(M)$ is zero.
Theorem \ref{modradical} (iii) now implies that $M$ is irreducible.
\end{proof}

\begin{rmk}\label{whataboutaffine}
In the cases where $\ypqr$ is of type affine $E_n$ for $n \in \{6, 7, 8\}$, the form $B$ is degenerate and there is no obvious 
analogue of the Virasoro element. The module $M$ is indecomposable as is the case for other infinite Weyl groups, but it has an
$n$-dimensional radical $\radi(M)$ consisting of the elements $\delta \vee v$, where $v \in V$ and where $\delta$ is the lowest 
positive imaginary root. The module $\radi(M)$ is isomorphic to the reflection representation, and it in turn has a submodule
spanned by $\delta \vee \delta$.
\end{rmk}

\section{Faithfulness}\label{sec:faithful}

In Section \ref{sec:faithful}, we find the kernels of the action of the Weyl group in its action on the nontrivial summands
of $S^2(V)$. The main result is Theorem \ref{thm:faithful}, which describes when $W$ acts faithfully on the nontrivial 
summands and on the associated orbits of \tworoots. This description depends on the centre $Z(W)$ of $W$, which has
the following explicit description.

\begin{lemma}\label{lem:centre}
Let $W$ be a Weyl group of type $\ypqr$. The centre $Z(W)$ of $W$ is trivial unless $W$ is of type $E_7$, $E_8$, or $D_n$ for
$n$ even; in particular, $Z(W)$ is trivial if $W$ is infinite. In the cases where $Z(W)$ is nontrivial, we have $Z(W) = \{1, w_0\}$, 
where $w_0$ is the longest element of $W$ and $w_0$ acts as the scalar $-1$ on the reflection representation $V$.
\end{lemma}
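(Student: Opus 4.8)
The plan is to first prove that every $z \in Z(W)$ acts on the reflection representation $V$ as $\varepsilon\cdot\mathrm{id}$ for some $\varepsilon \in \{1,-1\}$, and then to determine in which types the scalar $-1$ is actually realized by a group element. For the first step: a central $z$ commutes with each simple reflection $s_i$, so $s_{z(\al_i)} = z s_i z^{-1} = s_i$; since $z(\al_i)$ is a real root and two reflections in real roots coincide only when the roots agree up to sign (using \cite[Proposition 5.1 (b)]{kac90}), this forces $z(\al_i) = \varepsilon_i \al_i$ with $\varepsilon_i \in \{1,-1\}$ for every $i$. For adjacent vertices $i,j$ the $W$-invariance of $B$ gives $-1 = B(\al_i,\al_j) = B(z\al_i, z\al_j) = \varepsilon_i\varepsilon_j B(\al_i,\al_j) = -\varepsilon_i\varepsilon_j$, so $\varepsilon_i = \varepsilon_j$; since $\Gamma$ is connected, all $\varepsilon_i$ coincide and $z$ acts as $\varepsilon\cdot\mathrm{id}$. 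Because the reflection representation of a Coxeter group is faithful, $Z(W)$ has at most two elements, and has a nontrivial one exactly when $-\mathrm{id}$ lies in the image of $W$ in $GL(V)$; conversely, any $w$ acting as $-\mathrm{id}$ is automatically central, since $wuw^{-1}$ and $u$ then act identically on $V$ for every $u\in W$.

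Next I would dispose of the infinite case. If $w\in W$ acts as $-\mathrm{id}$, then $w$ carries every positive root to a negative root, so the inversion set $N(w)=\{\al\in\posroots : w(\al)\notin\posroots\}$ equals $\posroots$. But $|N(w)|=\ell(w)$ is a finite integer, whereas $\posroots$ is infinite whenever $W$ is infinite (for instance because $\ell(w')\le |\posroots|$ for every $w'\in W$). Hence no element of an infinite $W$ acts as $-\mathrm{id}$, so $Z(W)=\{1\}$; this covers all the infinite types, including affine $E_6$, $E_7$, and $E_8$.

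It remains to handle the finite types $D_n$ ($n\ge 4$), $E_6$, $E_7$, and $E_8$. In a finite Coxeter group the longest element $w_0$ is the unique element carrying every positive root to a negative root, and $-\mathrm{id}$ has the same property; thus $-\mathrm{id}\in W$ if and only if $w_0=-\mathrm{id}$, equivalently the opposition automorphism $-w_0$ of the Dynkin diagram is the identity. By the standard classification of this automorphism (see e.g.\ \cite{humphreys90}), $-w_0$ is trivial in types $E_7$, $E_8$, and $D_n$ with $n$ even, and is the nontrivial order-two diagram automorphism in type $E_6$ and in type $D_n$ with $n$ odd. Combining this with the first paragraph gives exactly the assertion: $Z(W)=\{1\}$ for type $E_6$ and for $D_n$ with $n$ odd, while $Z(W)=\{1,w_0\}$ with $w_0$ acting as $-1$ on $V$ for $E_7$, $E_8$, and $D_n$ with $n$ even.

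Every step here is routine once the reduction to a scalar action is in place, so I do not anticipate a genuine obstacle; the only inputs that are not entirely elementary are the faithfulness of the reflection representation and the standard determination of which Dynkin diagrams have trivial opposition automorphism, both of which can simply be cited.
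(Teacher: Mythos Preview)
Your argument is correct and takes a genuinely different route from the paper's proof. The paper proceeds almost entirely by citation: for finite $W$ it invokes \cite[Exercise~6.3.1]{humphreys90} and \cite[\S3.19]{humphreys90} to identify when $Z(W)=\{1,w_0\}$ with $w_0=-\mathrm{id}$, and for infinite $W$ it cites Qi's theorem \cite[Theorem~1.1]{qi07} on centres of finite-index subgroups of irreducible infinite Coxeter groups. Your approach, by contrast, is self-contained: you first reduce to showing that a central element acts as $\pm\mathrm{id}$ on $V$ (via the conjugation identity $s_{z(\al_i)}=s_i$ and connectedness of $\Gamma$), then dispose of the infinite case by the elementary observation that $-\mathrm{id}$ would force the inversion set to equal all of $\posroots$, contradicting $|N(w)|=\ell(w)<\infty$. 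This is considerably more elementary than invoking Qi's theorem, and it has the bonus of treating the affine and non-affine infinite cases uniformly, whereas Qi's result as quoted in the paper is stated only for nonaffine groups. The only external inputs you still need are faithfulness of the reflection representation and the standard list of types with $-w_0=\mathrm{id}$, both of which are routine citations.
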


\begin{proof}
Assume first that $W$ is finite. It follows from \cite[Exercise 6.3.1]{humphreys90} that we have $Z(W) =\{1, w_0\}$ 
in the case where the longest element $w_0$ acts as $-1$ on $V$, and $Z(W)=\{1\}$ otherwise. The assertions about $Z(W)$ being 
trivial follow from \cite[\S3.19]{humphreys90}, which completes the proof in the finite case.

Now assume that $W$ is infinite. Qi proves \cite[Proposition 2.5]{qi07}  that the center of any
irreducible
infinite Coxeter group is trivial.
In particular, this implies that the center of $W = W(\ypqr)$ is trivial if $W$ is infinite, which completes the proof.
\end{proof}

\begin{lemma}\label{lem:partition}
Let $\Gamma$ be a Dynkin diagram of type $\ypqr$ with at least five vertices, and let $\Gamma_1 \cup \Gamma_2$ be a partition of
the vertices into proper nonempty subsets. Then there are vertices $x_1 \in \Gamma_1$ and $x_2 \in \Gamma_2$ such that $x_1$ and
$x_2$ are not adjacent in $\Gamma$.
\end{lemma}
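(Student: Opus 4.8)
The plan is to argue by contradiction, exploiting the fact that $\Gamma=\ypqr$ is a tree. Suppose no such pair of non-adjacent vertices exists; that is, suppose every vertex of $\Gamma_1$ is adjacent in $\Gamma$ to every vertex of $\Gamma_2$. Then $\Gamma$ contains, on its full vertex set, the complete bipartite graph with parts $\Gamma_1$ and $\Gamma_2$, and this subgraph has exactly $|\Gamma_1|\cdot|\Gamma_2|$ edges.

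The key observation is that $\Gamma=\ypqr$ is a tree: it is connected and acyclic, being three paths joined at a single vertex. Hence $\Gamma$ has exactly $n-1$ edges, where $n=|\Gamma_1|+|\Gamma_2|$. Combining this with the previous paragraph gives
$$
|\Gamma_1|\cdot|\Gamma_2| \;\leq\; n-1 \;=\; |\Gamma_1|+|\Gamma_2|-1,
$$
which rearranges to $(|\Gamma_1|-1)(|\Gamma_2|-1)\leq 0$. Since $\Gamma_1$ and $\Gamma_2$ are both nonempty, this forces one of them, say $\Gamma_1$, to consist of a single vertex $x$.

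The final step derives a contradiction from the degree of $x$. Since $\Gamma_1=\{x\}$ is adjacent to all of $\Gamma_2$, the vertex $x$ has degree $|\Gamma_2|=n-1\geq 4$ in $\Gamma$, using the hypothesis that $\Gamma$ has at least five vertices; but every vertex of $\ypqr$ has degree at most $3$ (the branch vertex has degree $3$, and all other vertices have degree $1$ or $2$), a contradiction. This completes the argument. There is no real obstacle here; the only point needing care is precisely this last step, which is where the hypothesis of five vertices is essential — for $D_4=\ygc{1}{1}{1}$ the branch vertex has degree $3=n-1$, so the statement genuinely fails in that case, and $n\geq 5$ is exactly what is needed to push the forced degree above $3$.
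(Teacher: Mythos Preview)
Your proof is correct, and it takes a genuinely different route from the paper's. The paper argues by a short case analysis on $|\Gamma_1|$: assuming without loss of generality that $|\Gamma_1|\geq 3$, if $|\Gamma_1|\geq 4$ then any $x_2\in\Gamma_2$ fails to be adjacent to some element of $\Gamma_1$ since no vertex has degree $\geq 4$; if $|\Gamma_1|=3$ then $|\Gamma_2|\geq 2$, and two distinct vertices of $\Gamma_2$ cannot both be adjacent to all three vertices of $\Gamma_1$ because $\Gamma$ has a \emph{unique} vertex of degree $3$. Your argument instead counts edges: the tree property gives $|\Gamma_1|\cdot|\Gamma_2|\leq n-1$, forcing one part to be a singleton, after which the degree bound finishes. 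Your approach is slightly more general, applying verbatim to any tree on $n\geq 5$ vertices with maximum degree at most $3$, whereas the paper's treatment of the case $|\Gamma_1|=3$ relies on the uniqueness of the branch vertex, which is specific to $\ypqr$. The paper's version is a bit more hands-on; yours is cleaner and isolates exactly where the hypothesis $n\geq 5$ enters.
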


\begin{proof}
Without loss of generality, we may assume that $|\Gamma_1| \geq 3$.

If $|\Gamma_1| \geq 4$, then any $x_2 \in \Gamma_2$ fails to be adjacent to at least one element of $\Gamma_1$, because $\Gamma$ has
no vertex of degree $4$. 

If $|\Gamma_1| = 3$ then we must have $|\Gamma_2| \geq 2$. Let $x$ and $x'$ be distinct elements of $\Gamma_2$. There is a unique
vertex of degree $3$ in $\Gamma$, which means that either $x$ or $x'$ fails to be adjacent to one of the vertices in $\Gamma_1$,
completing the proof.
\end{proof}

\begin{prop}\label{prop:inffaith}
Let $W$ be an infinite group of type $\ypqr$, and let $V$ be the reflection representation of $W$. Then $W$ acts faithfully on the 
irreducible codimension-1 submodule $M$ of $S^2(V)$.
\end{prop}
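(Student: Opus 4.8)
The plan is to show that the kernel $K$ of the action of $W$ on $M$ is trivial by exploiting the fact that $W$ is an irreducible, infinite, non-affine Coxeter group, together with the explicit knowledge of the $W$-action on $M$ provided by Theorem \ref{thm:canbasrefl}. Since $K$ is a normal subgroup of $W$, the first observation is that if $K$ were nontrivial, then by a theorem of the type used in the proof of Lemma \ref{lem:centre} (Qi's theorem, \cite{qi07}), or by the well-known fact that normal subgroups of irreducible infinite non-affine Coxeter groups are either trivial or of finite index, $K$ would have finite index in $W$. I would first rule out the finite-index case: if $K$ had finite index, then $W/K$ would be a finite group acting faithfully on $M$, but $W$ also acts on $V$ and hence on $S^2(V) \cong F\virasoro \oplus M$; a finite-index normal subgroup acting trivially on $M$ would have to act nontrivially somewhere, and I would derive a contradiction with the infiniteness of $W$ via the reflection representation. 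Concretely, since $W$ is infinite and $V$ is a faithful $W$-module (by Proposition \ref{virred} together with standard faithfulness of the reflection representation for Coxeter groups), it suffices to show $K$ acts trivially on $V$ forces $K = 1$; so the real content is to deduce from "$K$ acts trivially on $M$" that "$K$ acts trivially on $V$".

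The key step is therefore: if $w \in W$ fixes every element of $M$ pointwise, then $w$ acts as a scalar on $V$. To see this, I would use that $M = \ker B$ contains all \tworoots $\al_i \vee \al_j$ for orthogonal simple roots $\al_i, \al_j$, and more importantly contains elements of the form $\al_i \vee \be$ for $\be$ \elementary with respect to $\al_i$ (the canonical basis $\B$). If $w$ fixes $\al_i \vee \al_j$ for two orthogonal simple roots, then $w(\al_i) \vee w(\al_j) = \al_i \vee \al_j$, so by Lemma \ref{lem:components}, $\{w(\al_i), w(\al_j)\}$ equals $\{\lambda \al_i, \lambda^{-1}\al_j\}$ or $\{\lambda \al_j, \lambda^{-1}\al_i\}$ for some scalar $\lambda$; since $w$ preserves the root lattice and sends roots to roots, and real roots have only $\pm$ scalar multiples (\cite[Proposition 5.1]{kac90}), we get $w(\al_i) = \pm \al_i$ and $w(\al_j) = \pm \al_j$ up to swapping. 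Running this over enough orthogonal pairs of simple roots (which exist in abundance since $\Gamma = \ypqr$ has at least $5$ vertices, using Lemma \ref{lem:partition} to guarantee non-adjacent vertices), I would pin down that $w$ permutes the set $\{\pm \al_i\}$ diagonally, i.e., $w(\al_i) = \epsilon_i \al_i$ for signs $\epsilon_i$, possibly after composing with a diagram automorphism; but a diagram automorphism moving $\al_i$ to $\al_j$ with $i \neq j$ would not fix $\al_i \vee \al_j$-type elements, so in fact $w(\al_i) = \epsilon_i \al_i$ for all $i$.

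Finally I would show all the signs $\epsilon_i$ are equal. Here I use an \elementary root: take $\al_i \vee \be \in \B$ where $\be = \eta_{j,k} = \al_j + \al_i + \al_k$ is of type 2 (so $i$ is the middle vertex of a path $j$–$i$–$k$), which lies in $M$. Then $w(\al_i \vee \be) = \epsilon_i \al_i \vee (\epsilon_j \al_j + \epsilon_i \al_i + \epsilon_k \al_k)$, and this must equal $\al_i \vee (\al_j + \al_i + \al_k)$; comparing, $\epsilon_j = \epsilon_i = \epsilon_k$ for every such path, and since $\Gamma$ is connected this forces all $\epsilon_i$ equal to a common sign $\epsilon$. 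Thus $w$ acts as $\epsilon \cdot \mathrm{id}$ on $V$. But $-\mathrm{id}$ acts on $S^2(V)$ as $+\mathrm{id}$, so this does not immediately give a contradiction on $M$ — instead I use Lemma \ref{lem:centre}: the only element of $W$ acting as $-1$ on $V$ is the longest element $w_0$, which exists only when $W$ is finite; since $W$ is infinite, $\epsilon = +1$, so $w = 1$ because $V$ is a faithful $W$-module. Hence $K$ is trivial and $W$ acts faithfully on $M$.

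**Main obstacle.** I expect the main obstacle to be the step deducing $w(\al_i) = \epsilon_i \al_i$ for \emph{every} $i$ — i.e., ruling out that $w$ acts on $V$ via a nontrivial diagram automorphism composed with signs. The argument via orthogonal pairs needs care: I must ensure that for each vertex $i$ there are enough orthogonal partners, and handle the branch point and its neighbors (which have few orthogonal simple roots in small rank) separately, likely invoking the \elementary roots $\eta_{i,k}$ and $\theta_i$ from $\B$ directly rather than only the $\al_i \vee \al_j$. Fortunately Lemma \ref{lem:partition} and the hypothesis that $W$ is infinite (so $n = a+b+c+1 \geq 7$, since the only finite/affine $\ypqr$ with small rank are excluded) give plenty of room, so this should go through, but it is where the real bookkeeping lives.
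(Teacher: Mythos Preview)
Your proposal is essentially correct, but it takes a substantially more elaborate route than the paper's proof. The paper's argument is very short: given $w \ne 1$, partition the simple roots into $S_1 = \{\al_i : w(\al_i) < 0\}$ and $S_2 = \simproots \setminus S_1$. Both sets are nonempty ($S_1$ because $w \ne 1$; $S_2$ because otherwise $w$ would be the longest element, which does not exist in an infinite $W$). Lemma~\ref{lem:partition} then produces orthogonal simple roots $\al_i \in S_1$, $\al_j \in S_2$, and the \tworoot $\al_i \vee \al_j$ is a single standard basis element mapped by $w$ to $w(\al_i) \vee w(\al_j)$, which is a \emph{negative} combination of standard basis elements. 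Hence $w$ does not fix $\al_i \vee \al_j$, and the kernel is trivial.

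Your approach instead reconstructs the action of $w$ on $V$ from its action on $M$: you use Lemma~\ref{lem:components} on all fixed elements $\al_i \vee \al_j \in \B$ to deduce $w(\al_i) = \pm \al_i$ (ruling out swaps by considering two distinct orthogonal partners of each $\al_i$, which exist since $n \ge 7$), then use the type~2 basis elements $\al_j \vee \eta_{i,k}$ to force all signs equal, and finally invoke Lemma~\ref{lem:centre} to conclude $w = 1$. This works, and the ``main obstacle'' you flag is in fact easily overcome once you note each simple root has at least two orthogonal simple-root partners. What your argument buys is an explicit description of which elements of $W$ act trivially on $M$ (namely those acting as $\pm 1$ on $V$), which is morally the content of Proposition~\ref{prop:finfaith} as well; but for the present statement the paper's positivity trick is far more economical, avoiding Lemma~\ref{lem:components}, the case analysis on signs, and any appeal to the structure of normal subgroups.
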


\begin{proof}
Since $W$ is infinite, the rank $n$ of $W$ is at least $7$, and the module $M$ is irreducible by Theorem \ref{s2vdecomp} because there 
is a single orbit of real \tworoots. Let $w$ be a nonidentity element of $W$; we need to show that $w$ does not
act on $M$ as the identity.

Let $S_1 = \{\al_i \in \simproots : w(\al_i) < 0\}$ and let $S_2 = \simproots \backslash S_1$. The set $S_1$ is nonempty because $w \ne 1$, 
and the set $S_2$ is nonempty because otherwise, $w$ would be the longest element of $W$, which is impossible because $W$ is infinite. By 
Lemma \ref{lem:partition}, there exist orthogonal simple roots $\al_i$ and $\al_j$ such that $w(\al_i) < 0$ and $w(\al_j) > 0$. The element
$w$ sends the \tworoot $\al_i \vee \al_j$ (which is a standard basis element) to $w(\al_i) \vee w(\al_j)$, which is a negative linear 
combination of standard basis elements. In particular, $w$ does not act as the identity on $M$, which completes the proof.
\end{proof}

Note that if $W$ is a finite group and the longest element $w_0$ of $W$ acts on $V$ as the scalar $-1$, then $w_0$ will act as
the identity on $M$. In these cases, $W$ will not act faithfully on $M$. 

\begin{prop}\label{prop:finfaith}
Let $W$ be a finite Weyl group of type $D_n$ or $E_n$ with $n \geq 4$, let $V$ be the reflection representation of $W$ over
a field $F$ of characteristic zero, and let $N$ be a nontrivial irreducible direct summand of the module $S^2(V)$ corresponding to
a $W$-orbit $X$ of \tworoots.
\begin{itemize}
    \item[\text{\rm (i)}]{If $X$ is the small orbit in type $D_n$ (as in Definition \ref{def:largesmall}), or any of the three
    orbits in type $D_4$, then the kernel of 
    the action of $W$ on $N$ is elementary abelian of order $2^{n-1}$.}
    \item[\text{\rm (ii)}]{In all other cases, the kernel of the action of $W$ on $N$ is the centre, $Z(W)$.}
\end{itemize}
\end{prop}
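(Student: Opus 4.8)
The plan is to reduce both parts to an analysis of how $W$ permutes the $2$-roots lying in the orbit $X$. By Theorem~\ref{s2vdecomp} we have $N = F(\B \cap X)$, so an element $w \in W$ acts trivially on $N$ if and only if it fixes every element of $\B \cap X$; by Proposition~\ref{intdecomp}~(ii) every $2$-root in $X$ is an integral combination of $\B \cap X$, so this is equivalent to $w$ fixing (as an element of $S^2(V)$) every $2$-root in $X$. Moreover, if $\al \vee \be$ is any such $2$-root then $\al$ and $\be$ are orthogonal real roots and hence linearly independent, so $w(\al \vee \be) = \al \vee \be$ forces $\{w(\al), w(\be)\}$ to agree with $\{\al, \be\}$ up to order and nonzero scalars, by Lemma~\ref{lem:components}. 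Since the only scalar multiples of a real root are $\pm$ itself (\cite[Proposition~5.1~(b)]{kac90}), this yields $w(\al) \in \{\pm\al, \pm\be\}$; this simple observation drives everything below.

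For part~(i), Remark~\ref{rmk:smallorbit} shows that the action of $W$ on $N$ factors through a surjection $p \colon W \to S_n$ (or $W \to S_4$ in type $D_4$), under which $N$ is identified with the standard $(n-1)$-dimensional reflection representation of $S_n$, spanned by the vectors $\ep_i \vee \ep_i - \ep_j \vee \ep_j$. That representation is faithful (a permutation fixing every difference $\ep_i \vee \ep_i - \ep_j \vee \ep_j$ must be the identity), so the kernel of the $W$-action on $N$ equals $\ker p$. This kernel is the group of diagonal $\pm1$-matrices effecting an even number of sign changes (in the three $D_4$ orbits, the image of that group under a graph automorphism of $D_4$), which is elementary abelian of order $2^{n-1}$, as required. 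The only thing needing verification here is that the induced $S_n$-representation is the reflection representation and that $\ker p$ has the stated form.

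For part~(ii), the inclusion $Z(W) \subseteq \ker$ is immediate: by Lemma~\ref{lem:centre}, $Z(W)$ is trivial or equals $\{1, w_0\}$ with $w_0$ acting on $V$ as $-1$, and $-1$ acts trivially on $S^2(V)$. For the reverse inclusion, suppose $w$ fixes every $2$-root in $X$; I will show $w$ acts on $V$ as a scalar, whence $w$ is central. Fix a root $\al$, and consider $R_\al := \{\be \in \realroots : \al \vee \be \in X\}$. In types $E_6$, $E_7$, $E_8$ there is a single orbit, so $R_\al = \al^{\perp} \cap \realroots$, a root subsystem of type $A_5$, $D_6$, $E_7$ respectively; in the large orbit of $D_n$ with $n \geq 5$, $R_\al$ is the $D_{n-2}$ subsystem of roots orthogonal to $\al$ whose support is disjoint from that of $\al$. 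In every case $R_\al$ has rank at least two, so it contains roots $\be_1, \be_2$ with $\be_1 \neq \pm\be_2$. Since $w$ fixes both $\al \vee \be_1$ and $\al \vee \be_2$, the first paragraph gives $w(\al) \in \{\pm\al, \pm\be_1\} \cap \{\pm\al, \pm\be_2\} = \{\pm\al\}$. As every root occurs as a component of some $2$-root in $X$, we conclude $w(\al) = \epsilon_\al\, \al$ with $\epsilon_\al \in \{\pm 1\}$ for every root $\al$.

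Finally, for adjacent simple roots $\al_i, \al_j$ the vector $\al_i + \al_j$ is itself a root, so $\epsilon_{\al_i} \al_i + \epsilon_{\al_j} \al_j = w(\al_i + \al_j)$ must equal $\pm(\al_i + \al_j)$; hence $\epsilon_{\al_i} = \epsilon_{\al_j}$, and by connectedness of the Dynkin diagram all the $\epsilon_\al$ share a common value $c \in \{\pm 1\}$, so $w$ acts on $V$ as the scalar $c$. Then $w s_\al w^{-1} = s_{w(\al)} = s_{c\al} = s_\al$ for every root $\al$, so $w$ centralises $W$, i.e. $w \in Z(W)$, which proves (ii). I expect the only real obstacle to be organizational: tracking the signs through Lemma~\ref{lem:components}, and correctly identifying $R_\al$ in each exceptional type; note that this argument for~(ii) is uniform over $D_n$ and $E_n$, sidestepping any need to enumerate the normal subgroups of $W(E_n)$.
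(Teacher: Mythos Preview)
Your proof of part~(i) is essentially the same as the paper's: both invoke Remark~\ref{rmk:smallorbit} to factor the action through the reflection representation of $S_n$ and then identify the kernel as the even-sign-change subgroup.

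For part~(ii) you take a genuinely different route. The paper argues as follows: given $w \notin \{1, w_0\}$, the sets $S_1 = \{\al_i \in \simproots : w(\al_i) < 0\}$ and $S_2 = \simproots \setminus S_1$ are both nonempty, and by Lemma~\ref{lem:partition} one can find orthogonal simple roots $\al_i \in S_1$, $\al_j \in S_2$ with $\al_i \vee \al_j$ in the orbit $X$ (adjusting, in type $D_n$, if one accidentally lands in the small orbit). Then $w(\al_i \vee \al_j)$ is a negative combination of standard basis elements while $\al_i \vee \al_j$ is a positive one, so $w$ moves it. This is a positivity argument in the spirit of the sign-coherence results of Section~\ref{sec:coherence}.

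Your argument instead uses Lemma~\ref{lem:components} to force $w(\al) \in \{\pm\al\}$ for every root, then connectedness of the Dynkin diagram to make all the signs agree, concluding that $w$ acts as a global scalar on $V$ and is therefore central. This is correct and arguably more structural: it never appeals to positivity or to Lemma~\ref{lem:partition}, and the same reasoning would apply to any orbit $X$ in which every root $\al$ admits two $X$-partners $\be_1 \ne \pm\be_2$. The paper's argument, on the other hand, is shorter once Lemma~\ref{lem:partition} is in hand and ties part~(ii) more visibly to the combinatorics of positive roots used elsewhere in the paper.
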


\begin{proof}
Suppose that $X$ is one of the orbits in the statement of (i). By Remark \ref{rmk:smallorbit}, the action of $W(D_n)$ on 
$N$ factors through the action of the symmetric group $S_n$ on the root system of type $A_{n-1}$, and the latter action
is faithful. The kernel of the action is the kernel of a homomorphism from $W(D_n)$ to $S_n$ that identifies
two of the generators on the short branches. The latter is elementary abelian of order $2^{n-1}$ (see \cite[\S2.10]{humphreys90}),
which proves (i).

To prove (ii), we will show that any $w \not \in \{1, w_0\}$ acts nontrivially on $N$. 
Let $S_1 = \{\al_i \in \simproots : w(\al_i) < 0\}$ and let $S_2 = \simproots \backslash S_1$. The assumptions on $W$ mean that
$S_1$ and $S_2$ are both nonempty. By Lemma \ref{lem:partition}, there exist orthogonal simple roots $\al_i \in S_1$ and 
$\al_j \in S_2$ such that $w(\al_i) > 0$ and $w(\al_j) < 0$. 

The \tworoot $\al_i \vee \al_j$ will be in the orbit $X$ as long as $\al_i \vee \al_j$
is not in the small orbit in type $D_n$. By Proposition \ref{prop:explicitorbits} (iii), this can only happen if we are in type
$D_n$ and $\{i, j\} = \{n-1, n\}$. In this case, we can replace the pair $\{\al_{n-1}, \al_n\}$ by one of the pairs 
$\{\al_{n-3}, \al_{n-1}\}$ or $\{\al_{n-3}, \al_n\}$ to obtain a pair of orthogonal simple roots with one element from each
of $S_1$ and $S_2$.

As in the proof of Proposition \ref{prop:inffaith}, we now have a \tworoot $\al_i \vee \al_j$ that is a standard basis element 
in the orbit $X$ such that $w(\al_i \vee \al_j)$ is a negative linear combination of standard basis elements. This shows that
$w$ acts nontrivially on $N$.

If $w_0 \in Z(W)$, then $w_0$ acts as $-1$ on $V$ and $w_0$ acts trivially on $S^2(V)$ and $N$. Combined with the fact that $w$
acts nontrivially on $N$, this shows that the kernel of the action is $Z(W) = \{1, w_0\}$.

On the other hand, if $w_0 \not\in Z(W)$, then $\{1, w_0\}$ is not a normal subgroup of $W$ and $Z(W)$ is trivial. In this case, 
the kernel of the action, which we already know is contained in $\{1, w_0\}$, is also trivial, as required.
\end{proof}

\begin{rmk}\label{ref:d4faith}
If $W$ is of type $D_4$, it can be shown that the kernels of the action of $W$ on each of three nontrivial direct summands of
$S^2(V)$ intersect in the centre, $Z(W)$, of order 2.
\end{rmk}

The results of Section \ref{sec:faithful} can be summarized as follows.

\begin{theorem}\label{thm:faithful}
Let $W$ be a Weyl group of type $\ypqr$ other than $W(D_4)$, let $V$ be the reflection representation of $W$ over
a field $F$ of characteristic zero, and let $N$ be a nontrivial irreducible direct summand of the module $S^2(V)$ corresponding to
a $W$-orbit $\trorbit$ of \tworoots. If $\trorbit$ is not the small orbit in type $D_n$, then the following hold.
\begin{itemize}
\item[\text{\rm (i)}]{The kernel of the action of $W$ on $N$ is the center, $Z(W)$, of $W$.}
\item[\text{\rm (ii)}]{The group $W$ acts faithfully on $N$ if and only if one of the following conditions holds:
\begin{itemize}
\item[\text{\rm (1)}]{$W$ is infinite;}
\item[\text{\rm (2)}]{$W$ is of type $D_n$ and $n$ is odd;}
\item[\text{\rm (3)}]{$W$ is of type $E_6$.}
\end{itemize}}
\end{itemize}
\end{theorem}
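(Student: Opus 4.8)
The plan is to deduce Theorem~\ref{thm:faithful} directly from Proposition~\ref{prop:inffaith} and Proposition~\ref{prop:finfaith}, so the ``proof'' is essentially a bookkeeping exercise combining those two results with the structural description of the center from Lemma~\ref{lem:centre}. First I would dispose of part~(i): by hypothesis $W \ne W(D_4)$ and $\trorbit$ is not the small orbit in type $D_n$, so we are precisely in the ``all other cases'' of Proposition~\ref{prop:finfaith}(ii) when $W$ is finite, giving that the kernel of the action on $N$ is $Z(W)$; and when $W$ is infinite, Proposition~\ref{prop:inffaith} says the action on $M$ (the unique nontrivial summand in that case, by Theorem~\ref{s2vdecomp}) is faithful, while Lemma~\ref{lem:centre} says $Z(W) = \{1\}$, so again the kernel equals $Z(W)$. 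This uniform statement is the content of~(i).

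Next I would derive part~(ii) from part~(i) by simply asking when $Z(W)$ is trivial. By Lemma~\ref{lem:centre}, $Z(W)$ is trivial unless $W$ is of type $E_7$, $E_8$, or $D_n$ with $n$ even. Therefore $W$ acts faithfully on $N$ (equivalently $Z(W) = \{1\}$) if and only if $W$ is not of type $E_7$, $E_8$, or $D_n$ with $n$ even. Among the types under consideration---infinite $\ypqr$, finite $D_n$ ($n \ge 4$, but $D_4$ excluded and, when $n$ is the small-orbit case, excluded), and $E_6, E_7, E_8$---the faithful cases are exactly: $W$ infinite; $W$ of type $D_n$ with $n$ odd; and $W$ of type $E_6$. (Type $A_n$ does not arise here since $\ypqr$ has a genuine trivalent vertex; one could remark that the $A_n$ statement is handled by restriction as elsewhere in the paper, but it is not part of this theorem's hypotheses.) This matches conditions (1)--(3) in the statement, completing~(ii).

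The only point requiring a little care---and the closest thing to an obstacle---is making sure the hypotheses of Theorem~\ref{thm:faithful} line up exactly with the case divisions of Propositions~\ref{prop:inffaith} and~\ref{prop:finfaith}: namely that excluding $W(D_4)$ and the small orbit of $D_n$ puts every remaining finite case into Proposition~\ref{prop:finfaith}(ii) rather than~(i), and that the infinite case is genuinely covered by Proposition~\ref{prop:inffaith} (which uses $n \ge 7$ and irreducibility of $M$ from Theorem~\ref{s2vdecomp}). Once that alignment is checked, there is nothing further to prove. I would write the proof in two short paragraphs mirroring parts~(i) and~(ii), citing Proposition~\ref{prop:inffaith}, Proposition~\ref{prop:finfaith}, and Lemma~\ref{lem:centre} at the appropriate points.

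\begin{proof}
We first prove (i). If $W$ is infinite, then its rank is at least $7$ and by Theorem~\ref{s2vdecomp} the module $M$ is the unique nontrivial direct summand of $S^2(V)$, so $N = M$; Proposition~\ref{prop:inffaith} shows that $W$ acts faithfully on $N$, and Lemma~\ref{lem:centre} shows that $Z(W)$ is trivial in this case, so the kernel of the action equals $Z(W)$. If $W$ is finite, then since $W \ne W(D_4)$ and $\trorbit$ is not the small orbit in type $D_n$, we are in the situation of Proposition~\ref{prop:finfaith}(ii), which states that the kernel of the action of $W$ on $N$ is $Z(W)$. This proves (i) in all cases.

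We now prove (ii). By part (i), the group $W$ acts faithfully on $N$ if and only if $Z(W)$ is trivial. By Lemma~\ref{lem:centre}, $Z(W)$ is trivial unless $W$ is of type $E_7$, $E_8$, or $D_n$ for $n$ even; in particular $Z(W)$ is trivial whenever $W$ is infinite. Among the types of $W$ under consideration, it follows that $W$ acts faithfully on $N$ precisely when $W$ is infinite, or $W$ is of type $D_n$ with $n$ odd, or $W$ is of type $E_6$. This establishes conditions (1)--(3) and completes the proof of (ii).
\end{proof}
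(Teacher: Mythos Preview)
Your proof is correct and follows essentially the same approach as the paper's own proof, which is a two-sentence citation of Proposition~\ref{prop:inffaith}, Proposition~\ref{prop:finfaith}(ii), and Lemma~\ref{lem:centre} for part~(i), and then part~(i) together with Lemma~\ref{lem:centre} for part~(ii). Your version is simply more explicit about how the hypotheses line up with the case division, which is fine.
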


\begin{proof}
Part (i) follows from Lemma \ref{lem:centre} and Proposition \ref{prop:inffaith} if $W$ is infinite, and from 
Proposition \ref{prop:finfaith} (ii) if $W$ is finite. Part (ii) follows from (i) and Lemma \ref{lem:centre}.
\end{proof}

\section*{Concluding remarks}\label{sec:conclusion}

Some natural candidates for generalizing the results of this paper including considering
$k$-roots for integers $k>2$, meaning symmetrized $k$-fold tensor products of mutually orthogonal roots. The most tractable
cases may be types $A$, $B$, and $D$, where the root systems are easy to understand and there is a diagram calculus \cite{green98}
to use as a guide. Following the completion of this paper, a notion of $k$-roots of type $D$ has been 
introduced and studied in the preprint \cite{spherical} by the first author. These $k$-roots have
similar properties to the 2-roots of the current paper and have applications to spherical functions of
Gelfand pairs $(S_n, S_k\times S_{n-k})$ arising from maximal Young subgroups of symmetric groups. 

In another direction, it would be interesting to know if the relations from
 Theorem \ref{thm:coherence2} expressing \tworoots as positive 
combinations of other \tworoots may be amenable to an interpretation in terms of categorification.


Although we did not discuss this for reasons of space, the ideas of this paper are motivated by the authors' study of the
Kazhdan--Lusztig basis of the Hecke algebra of $W$, specifically the elements $w \in W$ such that $\bfa(w)=2$, where $\bfa$ is
Lusztig's $\bfa$-function \cite{gx1,gx2}.
Using the Kazhdan--Lusztig basis $\{C_w : w \in W\}$, rather than the basis $\{C'_w : w \in W\}$, it is possible to
construct the canonical basis $\B$ as follows.
When $q$ is specialized to 1, it can be shown that for each Kazhdan--Lusztig basis element $C_w$ of $\bfa$-value 2, there are 
precisely two reflections $s_\alpha$ and $s_\beta$ such that $s_\alpha(C_w)$ and $s_\beta(C_w)$ are both equal to $-C_w$ modulo 
$I$, where $I$ is the ideal spanned by all Kazhdan--Lusztig basis elements of $\bfa$-value at least 3. It then turns out that the
function sending $C_w$ to $\al \vee \be$ extends to a module homomorphism from each cell module of $\bfa$-value 2 to the module $M$.
These identifications give rise to $q$-analogues of many of the results in this paper.

It can also be shown that for non-affine types in characteristic zero, the irreducible summands of $S^2(V)$ remain irreducible
upon restriction to the derived subgroup $W'$ of $W$. An important special case is the case $\yg{1}{2}{6}$, also known as type $E_{10}$ or
$E_8^{++}$. In this case, the derived subgroup $W'$ can be identified with $PSL(2, {\mathbf O})$ \cite[\S6.4]{feingold09}. Here, 
${\mathbf O}$ is the ring of octavians, a discrete subring of the octonions ${\mathbb O}$. 
The module $M$ in this case is a $54$-dimensional irreducible representation of $PSL(2, {\mathbf O})$ in characteristic
zero. There should be some octonionic interpretation of $M$ in this case, and we note that in this case, $M$ has twice the 
dimension of the exceptional Jordan algebra.

It follows from Remark \ref{rmk:smallorbit} that the \tworoots in the small orbit in type $D_n$ form a scaled copy of
the root lattice of type $A_{n-1}$. This suggests that the integral lattice of \tworoots $\Z \B$ may be related in interesting
ways to other known integral lattices. Some natural questions to ask are the following. 
\begin{itemize}
\item[(1)]{Are the \tworoots the only elements $x \in \Z \B$ for which $\otherb(x, x) = 4$?}
\item[(2)]{When does the lattice $\Z \B$ contain elements $x \in \Z \B$ such that $B'(x, x) = 2$?}
\item[(3)]{When does the lattice $\Z \B$ contain {\it roots}, meaning
elements $\al$ such that the reflection $$
s_\al : x \mapsto x - 2 \frac{\otherb(\al, x)}{\otherb(\al, \al)} \al
$$ gives an automorphism of $\Z \B$?}
\item[(4)]{Does the lattice $\Z \B$ have any automorphisms other than negation and those induced from automorphisms
of the root lattice $\Z \simproots$?}
\end{itemize}
The answer to question (3) above is positive in the case of the small orbit in type $D_n$. 
For question (2), Willson \cite{Willson} has shown that, outside the finite and affine types, there always exist
sign-coherent vectors $x$ with $B'(x,x)=2$. For example, consider the pairs of Coxeter diagrams of the form 
$(\Gamma', \Gamma) \in \{(Y_{2,2,2}, Y_{2,2,3}), (Y_{1,3,3}, Y_{1,3,4}), (Y_{1,2,5}, Y_{1,2,6})\}$ where $\Gamma'$ 
is naturally a subdiagram of $\Gamma$ (and where $\Gamma'$ is of affine type and $\Gamma$ is of
hyperbolic type). Let 
$\alpha_{-1}$ be the unique simple root in $\Gamma \backslash \Gamma'$, let $\alpha$ be a simple
root corresponding to one of the other endpoints of $\Gamma$, let $\delta$ be the lowest positive
imaginary root for $\Gamma'$, and let $\beta$ be a simple root of $\Gamma$ that is adjacent to
$\alpha$. Then the element $x:=((\alpha + \beta) \vee \alpha_{-1}) + (\alpha \vee \delta)$ is a sign-coherent
element with $B'(x,x)=2$. 

Finally, we note that in the physics literature, real roots in type $E_{10}$ correspond to instantons, and two real 
roots are orthogonal if and only if the corresponding instantons can ``bind at threshold"
\cite[\S 3.2]{brown04}. 
It would be interesting to know if the realization of \tworoots as lattice points in $\Z \B$ or the linear dependence 
relations between these lattice points have a physical interpretation.

\section*{Acknowledgements}
We thank Robert B. Howlett for suggesting to us a proof of Theorem \ref{thm:coherence2} on which our
current proof is based, and we thank the
referee for reading the paper carefully and suggesting many improvements. We also thank
Justin Willson for some helpful conversations.

\section*{Statements and Declarations}

The authors have no conflict of interest.

\bibliographystyle{plain}
\bibliography{2roots.bib}

\begin{thebibliography}{10}

\bibitem{allcock13}
Daniel Allcock.
\newblock Reflection centralizers in {C}oxeter groups.
\newblock {\em Transformation Groups}, 18(3):599--613, 2013.

\bibitem{bowman15}
Christopher Bowman, Maud De~Visscher, and Rosa Orellana.
\newblock The partition algebra and the {K}ronecker coefficients.
\newblock {\em Transactions of the American Mathematical Society},
  367(5):3647--3667, 2015.

\bibitem{brady02}
Noel Brady, Jonathan~P. McCammond, Bernhard M{\"u}hlherr, and Walter~D.
  Neumann.
\newblock Rigidity of {C}oxeter groups and {A}rtin groups.
\newblock {\em Geometriae Dedicata}, 94(1):91--109, 2002.

\bibitem{brink96}
Brigitte Brink.
\newblock On centralizers of reflections in {C}oxeter groups.
\newblock {\em Bull. Lond. Math. Soc.}, 28(5):465--470, 1996.

\bibitem{brown04}
Jeffrey Brown, Ori~J. Ganor, and Craig Helfgott.
\newblock {M}-theory and ${E}_{10}$: billiards, branes, and imaginary roots.
\newblock {\em Journal of High Energy Physics}, 2004(08):063, 2004.

\bibitem{cao19}
Peigen Cao and Fang Li.
\newblock Uniform column sign-coherence and the existence of maximal green
  sequences.
\newblock {\em Journal of Algebraic Combinatorics}, 50(4):403--417, 2019.

\bibitem{cohen06}
Arjeh~M. Cohen, Di{\'e}~A.H. Gijsbers, and David~B. Wales.
\newblock A poset connected to {A}rtin monoids of simply laced type.
\newblock {\em Journal of Combinatorial Theory, Series A}, 113(8):1646--1666,
  2006.

\bibitem{conway85}
J.H. Conway, R.T. Curtis, S.P. Norton, R.A. Parker, and R.A. Wilson.
\newblock {\em $\mathbb{ATLAS}$ of finite groups}.
\newblock Oxford University Press, Eynsham, 1985.

\bibitem{dolgachev08}
Igor Dolgachev.
\newblock Reflection groups in algebraic geometry.
\newblock {\em Bulletin of the American Mathematical Society}, 45(1):1--60,
  2008.

\bibitem{feingold09}
Alex~J. Feingold, Axel Kleinschmidt, and Hermann Nicolai.
\newblock Hyperbolic {W}eyl groups and the four normed division algebras.
\newblock {\em Journal of Algebra}, 322(4):1295--1339, 2009.

\bibitem{fomin07}
Sergey Fomin and Andrei Zelevinsky.
\newblock Cluster algebras {I}{V}: coefficients.
\newblock {\em Compositio Mathematica}, 143(1):112--164, 2007.

\bibitem{geck00}
Meinolf Geck, G{\"o}tz Pfeiffer, et~al.
\newblock {\em Characters of finite Coxeter groups and Iwahori--Hecke
  algebras}.
\newblock Oxford University Press, 2000.

\bibitem{green98}
R.M. Green.
\newblock Generalized {T}emperley--{L}ieb algebras and decorated tangles.
\newblock {\em Journal of Knot Theory and its Ramifications}, 7(02):155--171,
  1998.

\bibitem{spherical}
R.M. Green.
\newblock Positivity properties for spherical functions of maximal {Y}oung
  subgroups, 2022.
\newblock arXiv:2211.15989.

\bibitem{gx1}
R.M. Green and Tianyuan Xu.
\newblock Classification of {C}oxeter groups with finitely many elements of
  {\bf a}-value 2.
\newblock {\em Algebraic Combinatorics}, 3(2):331--364, 2020.

\bibitem{gx2}
R.M. Green and Tianyuan Xu.
\newblock {K}azhdan--{L}usztig cells of {\bf a}-value 2 in {\bf a}(2)-finite
  {C}oxeter systems.
\newblock To appear in \emph{Algebraic Combinatorics}, 2023.
\newblock arXiv:2109.09803.

\bibitem{griess98}
Robert~L. Griess, Jr.
\newblock A vertex operator algebra related to ${E}_8$ with automorphism group
  ${O}^+(10,2)$.
\newblock In {\em The {M}onster and {L}ie algebras}, number~7 in Ohio State
  Univ. Math. Res. Inst. Publ., pages 43--58. De Gruyter, Berlin, 1998.

\bibitem{Humphreys78}
James~E. Humphreys.
\newblock {\em Introduction to {L}ie algebras and representation theory},
  volume~9 of {\em Graduate Texts in Mathematics}.
\newblock Springer-Verlag, New York-Berlin, 1978.
\newblock Second printing, revised.

\bibitem{humphreys90}
James~E. Humphreys.
\newblock {\em Reflection groups and {C}oxeter groups}.
\newblock Number~29 in Cambridge studies in advanced mathematics. Cambridge
  University Press, 1990.

\bibitem{ivanov93}
A.A. Ivanov.
\newblock Constructing the {M}onster via its {Y}-presentation.
\newblock In {\em Combinatorics, Paul Erd\H{o}s is Eighty}, volume~1 of {\em
  Bolyani Soc. Math. Stud.}, pages 253--270. Bolyani Math. Soc., Budapest,
  1993.

\bibitem{kac90}
Victor~G. Kac.
\newblock {\em Infinite-dimensional {L}ie algebras}.
\newblock Cambridge University Press, 1990.

\bibitem{qi07}
Dongwen Qi.
\newblock On irreducible, infinite, nonaffine {C}oxeter groups.
\newblock {\em Fundamenta Mathematicae}, 193(1):79--93, 2007.

\bibitem{oeis}
Neil J.~A. Sloane and The OEIS~Foundation Inc.
\newblock The {O}n-{L}ine {E}ncyclopedia of {I}nteger {S}equences, 2022.

\bibitem{Willson}
Justin~T. Willson.
\newblock Personal communication.

\end{thebibliography}

\end{document}